\documentclass[oneside,english]{amsart}
\usepackage[T1]{fontenc}
\usepackage{textcomp}
\usepackage[utf8]{inputenc}
\usepackage{verbatim}
\usepackage{amstext}
\usepackage{amsthm}
\usepackage{amssymb}
\usepackage{xargs}[2008/03/08]

\makeatletter
\numberwithin{equation}{section}
\theoremstyle{plain}
\newtheorem{thm}{\protect\theoremname}[section]
\theoremstyle{definition}
\newtheorem{defn}[thm]{\protect\definitionname}
\theoremstyle{remark}
\newtheorem{rem}[thm]{\protect\remarkname}
\theoremstyle{plain}
\newtheorem{lem}[thm]{\protect\lemmaname}
\theoremstyle{plain}
\newtheorem{cor}[thm]{\protect\corollaryname}
\theoremstyle{definition}
\newtheorem{example}[thm]{\protect\examplename}
\theoremstyle{definition}
\newtheorem{problem}[thm]{\protect\problemname}

\usepackage{hyperref}
\usepackage[T2A]{fontenc}

\usepackage{tikz}
\usetikzlibrary{calc,math,intersections}
\usetikzlibrary{shapes.geometric}
\usetikzlibrary{patterns}
\usetikzlibrary{arrows.meta}
\mathcode`l="8000
\begingroup
\makeatletter
\lccode`\~=`\l
\DeclareMathSymbol{\lsb@l}{\mathalpha}{letters}{`l}
\lowercase{\gdef~{\ifnum\the\mathgroup=\m@ne \ell \else \lsb@l \fi}}%
\endgroup

\makeatother

\usepackage{babel}
\providecommand{\corollaryname}{Corollary}
\providecommand{\definitionname}{Definition}
\providecommand{\examplename}{Example}
\providecommand{\lemmaname}{Lemma}
\providecommand{\problemname}{Problem}
\providecommand{\remarkname}{Remark}
\providecommand{\theoremname}{Theorem}

\begin{document}
\global\long\def\d{\mathrm{d}}%

\global\long\def\D{\operatorname{D}}%

\global\long\def\conv{\operatorname{conv}}%

\global\long\def\epi{\operatorname{epi}}%

\global\long\def\diag{\operatorname{diag}}%

\global\long\def\diam{\operatorname{diam}}%

\global\long\def\sr{\operatorname{sr}}%

\global\long\def\vol{\operatorname{vol}}%

\global\long\def\BDV{\operatorname{BDV}}%

\global\long\def\dist{\operatorname{dist}}%

\global\long\def\osc{\operatorname{osc}}%

\global\long\def\rank{\operatorname{rank}}%

\global\long\def\Span{\operatorname{span}}%

\global\long\def\id{\operatorname{id}}%

\global\long\def\aff{\operatorname{aff}}%

\global\long\def\Aff{\operatorname{Aff}}%

\global\long\def\relint{\operatorname{relint}}%

\global\long\def\Int{\operatorname{int}}%

\global\long\def\R{\mathbb{R}}%

\global\long\def\pa{\operatorname{pa}}%

\global\long\def\children{\operatorname{chil}}%

\global\long\def\siblings{\operatorname{sibl}}%

\global\long\def\anc{\operatorname{anc}}%

\global\long\def\desc{\operatorname{desc}}%

\global\long\def\nca{\operatorname{nca}}%

\global\long\def\Simplexe{\mathbb{S}}%

\global\long\def\T{\mathcal{T}}%

\global\long\def\Vertices{\mathcal{\mathcal{V}}}%

\global\long\def\Edges{\mathcal{\mathcal{E}}}%

\global\long\def\cof{\operatorname{cof}}%

\global\long\def\supp{\operatorname{supp}}%

\global\long\def\Sub{\operatorname{Sub}}%

\global\long\def\sub{\operatorname{sub}}%

\global\long\def\Ebis{E_{\operatorname{bis}}}%

\global\long\def\bisE{\operatorname{bis}\mathcal{E}}%

\global\long\def\tria{\operatorname{tria}}%

\global\long\def\Vnew{V_{\operatorname{new}}}%

\global\long\def\vnew{v_{\operatorname{new}}}%

\global\long\def\vnew{v_{\operatorname{new}}}%

\global\long\def\Mid{\operatorname{mid}}%

\global\long\def\RHS{\operatorname{RHS}}%

\global\long\def\Part{\operatorname{Part}}%

\global\long\def\ra{\rightarrow}%

\global\long\def\la{\leftarrow}%

\newcommandx\ar[1][usedefault, addprefix=\global, 1=1]{\xrightarrow{#1}}%

\newcommandx\al[1][usedefault, addprefix=\global, 1=1]{\xleftarrow{#1}}%

\global\long\def\Rstr{\operatorname{Rstr}}%

\global\long\def\Descext{\operatorname{Desc_{\{ext\}}}}%

\global\long\def\smpl{\operatorname{smpl}}%

\global\long\def\mres{\mathbin{\vrule height1.6exdepth0ptwidth0.13ex\vrule height0.13exdepth0ptwidth1.3ex}}%

\title[The Hessian equation for very general data]{The Hessian equation on nonsmooth $k$-convex domains or in the presence
of subsolutions}
\author{J. Lukas Gehring}
\keywords{2020 \emph{Mathematics subject classification}. 35J96, 35J66, 35J70.
Hessian measures, Monge--Ampère equation, fully nonlinear degenerate
elliptic PDE, Dirichlet problem, $k$-convex functions, $k$-hyperconvex
set}
\thanks{The author wishes to thank God in heaven for the grace to give such
ideas. Further, he wants to thank the European Union's Horizon 2020
research and innovation programme (Grant agreement No.~891734) for
funding.}
\email{lukas.gehring@uni-jena.de}
\begin{abstract}
It is proved for $k>n/2$ that the Dirichlet problem of the $k$-Hessian
measure on a (nonsmooth and nonuniformly) $k$-convex (also known
as $k$-hyperconvex) domain for any finite Borel measure as right-hand
side and boundary data taken from a $k$-convex and continuous function
on the closure of the domain has a unique solution. Merely \emph{continuous}
boundary data are sufficient for \emph{strictly} $k$-convex domains
(i.e.\ if there exist strong barriers). 

Given a sub- and a supersolution, the existence results are proved
also for \emph{general} domains, \emph{infinite} Borel measures and
\emph{discontinuous} boundary data.
\end{abstract}

\maketitle

\section{Introduction and main results}

The Dirichlet problem of the Hessian operator was studied e.g.\ by
Ivochkina \cite{Ivochkina81Eng,IvochkinaIntegralEng,Ivochkina83stabilityEng},
Caffarelli, Nirenberg and Spruck \cite{CaffarelliNirenbergSpruck3}
who showed existence of a unique smooth solution for smooth data,
and Trudinger and Wang \cite{TrudingerWangHM1,TrudingerWangHM2,TrudingerWangHM3}\cite{WangHessian}.
The latter extended the Hessian operator in a weak sense to a class
of functions which need not be twice differentiable, a subset of the
subharmonic functions and a superset of the locally convex functions,
called them the \emph{$k$-convex} functions, and showed unique solvability
of the $k$-Hessian Dirichlet problem in that class for rather general
domains, boundary values and right-hand sides, see Remark \ref{rem:TW=000020existence}
below. For $k>n/2$ the $k$-convex functions are real-valued, continuous
and easier to deal with than for $k\le n/2$, therefore we limit ourselves
to that case as in \cite{TrudingerWangHM1}. In this paper, we generalize
that theorem in the following respects: 

Firstly, in \cite{TrudingerWangHM1}, the boundary of the domain had
to be $C^{2}$ and uniformly $\Gamma_{k-1}$ (in our terminology).
A corresponding theorem for the Monge--Ampère equation \cite{Hartenstine}
requires merely convex domains. Actually, the history of results of
this type also includes the solution of the Dirichlet problem of the
\emph{complex} Monge--Ampère operator by Bedford and Taylor \cite{BedfordTaylor1976Dirichlet}
for \emph{strictly pseudoconvex }(the analog of uniformly convex)
domains and it was Błocki \cite{BlockiLp} who remarked that hyperconvexity
(the analog of mere convexity) instead of strict pseudoconvexity is
the natural condition for the boundary to solve the Dirichlet problem.
We solve the Dirichlet problem on open bounded $k$-convex sets, i.e.\ sets
which admit a negative-valued continuous $k$-convex function $u_{\Omega}$
such that the sublevels of $u_{\Omega}$ and negative numbers are
compactly included.

Secondly, while in \cite{TrudingerWangHM1} the right-hand side measure
needed to be finite and the sum of an absolutely continuous and a
compactly supported measure, our generalization works for any finite
Borel measure as right-hand side, as the well-established existence
theorem for the Monge--Ampère equation does, see Theorem \ref{thm:Bern}
below.

Thirdly, Guan and Spruck gave examples for the existence of solutions
relying on the existence of strict smooth subsolutions without any
geometrical conditions on $\Omega$ in \cite{GuanSpruck1993,Guan1994DirichletClassFNEE}
and other papers, as it holds generally for viscosity solutions \cite[Thm.~4.1, p.~23]{CrandallIshiiHitoshiUsersGuide}.
Given a subsolution, we want to ask how the other assumptions can
be relaxed. The measure may be infinite and the boundary conditions
can be given by a function on the domain (discontinuous on the closure)
and we say that another function satisfies these boundary conditions
if the difference of the functions decays towards the boundary. We
prove some relative statements about discontinuous boundary data and
infinite Borel measures about the set of solvable right hand-sides:
When the Dirichlet problem is solvable for certain right-hand sides
(e.g.\ with zero boundary conditions), it is also solvable for certain
others (e.g.\ smaller measure and continuous boundary values), see
Theorem \ref{thm:gedaechtniskirche} below.

Let us introduce some standard notation. We denote by $\bar{A}$ the
closure of a set $A\subset\mathbb{R}^{n}$. For functions $u,w$ on
a domain of definition $\Omega$, $\{u\le w\}$ is an abbreviation
for the set $\left\{ x\in\Omega~\middle|~u(x)\le w(x)\right\} $ and
the like. Given $\Omega\subset\mathbb{R}^{n}$ and a subset of a normed
space $A$, $C_{0}(\Omega,A)$ and $C_{0}(\Omega)$ is the set of
continuous functions $u$ from $\Omega$ to $A$ and to $\mathbb{R}$,
respectively, such that $\{\left|u\right|\le\varepsilon\}\subset\Omega$
is compact for all $\varepsilon>0$. If we write something like $u\in C(K)$,
$u$ is often defined on a superset of $K$ and we mean that the restriction
$u|_{K}$ is continuous. We say that a sequence $u_{N}\in C(\Omega_{N})$
converges in $C_{\mathrm{loc}}(\Omega)$ if for any compact $K\subset\Omega$
there exists $N_{0}>0$ such that $K\subset\Omega_{N}$ for all $N\ge N_{0}$
and $u_{N}|_{K}$ converges in $C(K)$. The expression $A\subset\subset B$
means that $\bar{A}\subset B$ and $\bar{A}$ is compact. 

We will sometimes abbreviate the Euclidean distance from a set $A$
to a point $b$ by 
\[
\dist_{A}(b):=\dist(A,b)=\inf_{a\in A}\left|a-b\right|.
\]

Let us briefly define the $k$-Hessian operator, $k$-convex functions
and uniformly $k$-convex sets. We try to abide by the notation of
Trudinger and Wang. For $k=1,\dots,n$, and $u\in C^{2}(\Omega)$,
the $k$-Hessian operator $F_{k}$ is defined by
\[
F_{k}[u]:=S_{k}(\lambda(D^{2}u)),
\]
where $\lambda=(\lambda_{1},\dots,\lambda_{n})$ are the eigenvalues
of the Hessian matrix and $S_{k}$ is the $k$-th elementary symmetric
function on $\mathbb{R}^{n}$, given by 
\[
S_{k}(\lambda):=\sum_{i_{1}<\dots<i_{k}}\lambda_{i_{1}}\cdots\lambda_{i_{k}}.
\]
We denote the associated \emph{Gårding cone} (see \cite{Garding}\cite[(2.5)]{TrudingerWangHM2})
by 

\[
\Gamma_{k}:=\Gamma_{k}^{n}:=\left\{ \lambda\in\mathbb{R}^{n}~\middle|~\forall\eta_{i}\ge\lambda_{i}\quad S_{k}(\eta)>0\right\} .
\]
A \emph{$k$-convex function $u:\Omega\to\left[-\infty,\infty\right[$}
is an upper semi-continuous function with the property that if $\phi$
is a quadratic polynomial such that $u-\phi$ has a local maximum
at $x\in\Omega$, then $F_{k}[\phi]\geq0$. In particular, the $1$-convex
functions are the subharmonic functions, the $n$-convex functions
are the locally convex functions. Let $\Phi^{k}(\Omega)$ be the set
of $k$-convex functions on $\Omega$. A function $u\in C^{2}(\Omega)$
is $k$-convex if the eigenvalues of $D^{2}u(x)$ lie in $\bar{\Gamma}_{k}$
for any $x\in\Omega$. For $k>n/2$, $\Phi^{k}(\Omega)\subset C(\Omega)$
\cite[Thm.~2.7]{TrudingerWangHM2}. We say that a sequence of (nonnegative)
measures $\nu_{j}$ on $\Omega$ converges to $\nu$ weakly{*} and
write $\nu_{j}\rightharpoonup^{*}\nu$ if $\int_{\Omega}\phi\,\d\nu_{j}\to\int_{\Omega}\phi\,\d\nu$
for any compactly supported continuous function $\phi\in C_{c}(\Omega)$.
Like the Aleksandrov formulation of the Monge--Ampère operator, the
$k$-Hessian operator can be extended to the $k$-convex functions
as shown by Trudinger and Wang in \cite[Thm.~1.1]{TrudingerWangHM2}
and \cite[Thm.~8.1]{WangHessian}: For any $u\in\Phi^{k}(\Omega)$,
there exists a locally finite Borel measure $\mu_{k}[u]$ on $\Omega$,
the\emph{ $k$-Hessian measure generated by} $u$, such that $\mu_{k}[u]=F_{k}[u]\,dx$
if $u\in C^{2}(\Omega)$ and the following property holds: \emph{If
$u_{m}$ is a sequence with
\begin{align}
\begin{array}{rl}
\Phi^{k}(\Omega)\ni u_{m}\negthickspace\negthickspace\negmedspace & \to u\in\Phi^{k}(\Omega)\quad\text{loc. in measure or a.e.,}\\
\text{then}\qquad\mu_{k}[u_{m}]\negthickspace\negthickspace\negmedspace & \rightharpoonup^{*}\mu_{k}[u].
\end{array}\label{eq:weak*=000020conti}
\end{align}
}As the most prominent examples, $\mu_{1}$ is the Laplace operator,
$\mu_{n}$ is the Monge--Ampère measure. 

The following definition is supplemented by many equivalent ones in
the author's upcoming paper \cite{Gehring2025kconvex}.
\begin{defn}[$k$-convex and strictly $k$-convex sets, negative exhaustion function,
strong barrier]
We say that an open bounded set is \emph{$k$-convex} (also known
as $k$-hyperconvex) if there exists a $k$-convex \emph{negative
exhaustion function} (also known as level function), that is a function
$u_{\Omega}\in\Phi^{k}(\Omega)\cap C_{0}(\Omega,\left]-\infty,0\right[)$.
A $k$-convex set is \emph{strictly $k$-convex} (also known as $B_{k}$-regular)\emph{
if} for any boundary point $y_{0}\in\partial\Omega$ there exists
a continuous $k$-convex \emph{strong barrier }function, i.e.\ a
function $u_{\Omega,y_{0}}\in\Phi^{k}(\Omega)\cap C(\bar{\Omega})$
with a strict global maximum in $y_{0}$. (The supposition that the
set is $k$-convex is redundant, see \cite{Gehring2025kconvex}.)
\end{defn}

These definitions originate from the notions of a \emph{regular} domain
for the Laplace operator and \emph{pseudoconvex, $B$-regular,} and
\emph{hyperconvex} domains for plurisubharmonic functions, i.e.\ for
the complex Monge--Ampère operator, see e.g.\ \r{A}hag, Czy\.{z}
and Hed \cite{AhagCzyzHed} for more information. In contrast to \cite{WanWangLelong},
we prefer the designation \emph{$k$-convex }set, because the connected
$n$-convex sets are exactly the open convex sets (see \cite{Gehring2025kconvex}
for a proof).

Some authors define a smooth $k$-convex subset of $\mathbb{R}^{n}$
as a smooth set with a $k$-convex exhaustion function, while others
(e.g.\ \cite{TrudingerWangHM1,TrudingerWangHM2,TrudingerWangHM3,WangHessian})
as a set with principal curvatures lying in $\bar{\Gamma}_{k}^{n-1}$
which causes ambiguity of the meaning of $k$, because a $k$-convex
exhaustion function corresponds to principal curvatures in $\bar{\Gamma}_{k-1}^{n-1}$
(see Lemma \ref{thm:k-1=000020konvex}). While both options can be
justified well, we follow the former terminology like \cite{AhagCzyzHed,WanWangLelong}. 

We say that an open set $\Omega$ of class $C^{2}$ has a \emph{uniform
$\Gamma_{k}$-boundary} if the principal curvatures (see Definition
\ref{def:principals}) lie in $\Gamma_{k}$ and $S_{k}(\kappa_{1},\dots,\kappa_{n})\ge c>0$
at any point of the boundary. In \cite{TrudingerWangHM1,TrudingerWangHM2,TrudingerWangHM3,WangHessian}
such a set was called $k$-convex. Such a contrasting name should
avoid confusion with our definition of a $k$-convex set, but in \cite{Gehring2025kconvex}
we show that a $C^{2}$-bounded set is $k$-convex if and only if
its principal curvatures lie in $\bar{\Gamma}_{k-1}$.

The first principal result is the following.
\begin{thm}
\label{thm:Bern}Let $k>n/2$, $\Omega\subset\mathbb{R}^{n}$ be open,
bounded, and $k$-convex. Let $u_{\partial}\in\Phi^{k}(\Omega)\cap C(\bar{\Omega})$
and $\nu$ be a finite Borel measure on $\Omega$. Then the Dirichlet
problem
\begin{equation}
\begin{cases}
\mu_{k}[u]=\nu & \text{in }\Omega\\
u=u_{\partial} & \text{on }\partial\Omega
\end{cases}\label{eq:DP_alt}
\end{equation}
has a unique solution $u\in\Phi^{k}(\Omega)\cap C(\bar{\Omega})$.

The same holds true if $\Omega$ is strictly $k$-convex and $u_{\partial}\in C(\partial\Omega)$
instead of $u_{\partial}\in\Phi^{k}(\Omega)\cap C(\bar{\Omega})$. 
\end{thm}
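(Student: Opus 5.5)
Uniqueness comes first, from the comparison principle for $k$-Hessian measures with $k>n/2$ (Trudinger--Wang). If $u,v\in\Phi^{k}(\Omega)\cap C(\bar\Omega)$ satisfy $\mu_k[u]\ge\mu_k[v]$ in $\Omega$ and $u\le v$ on $\partial\Omega$, then $u\le v$. Applying this twice to two solutions gives uniqueness.

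For existence I would approximate and pass to the limit with \eqref{eq:weak*=000020conti}. First I exhaust $\Omega$ by smooth uniformly $\Gamma_k$ domains and regularize the data. The exhaustion is $\Omega_j\uparrow\Omega$, obtained as sublevel sets $\{u_{\Omega}^{\varepsilon}<-1/j\}$ of a strictly $k$-convex smooth mollification of $u_\Omega+\varepsilon|x|^2$; Sard's lemma gives smooth boundaries. The measures are $\nu_j=(\nu\mres\Omega_j)*\rho_{\delta_j}+\delta_j$, smooth and positive, with $\nu_j\rightharpoonup^*\nu$ and $\nu_j(\Omega_j)\le\nu(\Omega)+o(1)$. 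The boundary data are smooth $k$-convex approximations $u_{\partial,j}$ of $u_\partial$, uniformly convergent on $\bar\Omega$. The Caffarelli--Nirenberg--Spruck / Trudinger--Wang theory then yields smooth solutions $u_j$ on $\Omega_j$ with $\mu_k[u_j]=\nu_j$ and $u_j=u_{\partial,j}$ on $\partial\Omega_j$.

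Next I need uniform bounds and equicontinuity. For $k>n/2$, $k$-convex functions are locally uniformly Hölder continuous, with the modulus controlled by the $L^\infty$ norm \cite{TrudingerWangHM2}. The upper bound $u_j\le \max u_{\partial,j}$ comes from the maximum principle. For the lower bound I would build a global subsolution from the finite total mass: with $G$ a $k$-Hessian "potential" of $\nu$ on a large ball, which is bounded since $k>n/2$ (the Trudinger--Wang/Labutin estimate for Hessian potentials), set
\[
w=u_\partial+\max\bigl(G,\,A u_\Omega\bigr)\text{-type combination}.
\]
Concavity of $\mu_k^{1/k}$, via superadditivity of Hessian measures, gives $\mu_k[w]\ge\nu$, and $w\le u_\partial$ near $\partial\Omega$ with $w-u_\partial\to0$ at $\partial\Omega$. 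The comparison principle then gives $w-o(1)\le u_j\le v$, where $v$ is the harmonic-type supersolution; here $v=$ the $k$-convex envelope/the solution with $\nu=0$, or simply $u_\partial$ corrected by $\max u_{\partial}$.

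These two-sided bounds, together with Arzelà--Ascoli, give a subsequence converging in $C_{\mathrm{loc}}(\Omega)$ to some $u\in\Phi^k(\Omega)$, and \eqref{eq:weak*=000020conti} gives $\mu_k[u]=\nu$. The barrier $w\le u\le\tilde v$ squeezes $u$ to $u_\partial$ at $\partial\Omega$, so $u\in C(\bar\Omega)$.

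\textbf{Main obstacle.} I expect the main obstacle to be the lower barrier when $\nu$ charges mass arbitrarily close to $\partial\Omega$ and $\Omega$ is only $k$-convex. The potential $G$ must be forced to vanish at $\partial\Omega$ using only the exhaustion function $u_\Omega$. I would first try to decompose $\nu=\nu\mres K+\nu\mres(\Omega\setminus K)$ with small outer mass, then use stability estimates: the $L^\infty$ oscillation of the solution is controlled by a power of the total mass, so the outer part perturbs $u$ by at most $C\,\nu(\Omega\setminus K)^{1/k}$. The compact part is handled by $Au_\Omega$ glued with $G$.

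For the strictly $k$-convex case with $u_\partial\in C(\partial\Omega)$, I would extend the data by a Perron construction. Take $\tilde u_\partial=\sup\{\phi\in\Phi^k\cap C(\bar\Omega):\phi\le u_\partial\text{ on }\partial\Omega\}$. The strong barriers $u_{\Omega,y_0}$ give continuity of $\tilde u_\partial$ up to the boundary with boundary values $u_\partial$; the continuity of the upper envelope in the interior follows from the Hölder estimates. After this, the first case applies.
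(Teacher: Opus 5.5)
Your scheme works once the decomposition from your ``main obstacle'' paragraph is built into the lower barrier, and it follows a different route from the paper's.

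\textbf{The lower barrier.} As written in the main line, $u_\partial+\max(G,Au_\Omega)$ with $G$ a potential of \emph{all} of $\nu$ is not a subsolution when $\nu$ charges every neighbourhood of $\partial\Omega$. There the maximum equals $Au_\Omega$, and $A^k\mu_k[u_\Omega]$ need not dominate $\nu$ (think of Dirac masses accumulating at $\partial\Omega$). The working version is as follows.
Fix a compact $K$, and let $G_{K,j}$ be the potential on a large ball of the mollified $\nu\mres K$. Pick $A=A(K)$ with $Au_\Omega<G_{K,j}$ on a fixed compact neighbourhood of $K$; this is uniform in $j$ because $G_{K,j}\ge -C\nu(K)^{1/k}$.
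By superadditivity, add the zero-boundary solution $v_j$ on $\Omega_j$ for the rest of $\nu_j$, namely the mollified $\nu\mres(\Omega_j\setminus K)$ plus the constant density $\delta_j$. It satisfies $v_j\ge -C(\nu(\Omega\setminus K)+\delta_j|\Omega|)^{1/k}$ by Corollary~\ref{cor:AMP=000020mit=000020Randdaten} and the monotonicity of the Green's function in the domain.
Taking $u_{\partial,j}=u_\partial*\rho_{h_j}\ge u_\partial$, comparison on $\Omega_j$ gives $u_j\ge u_\partial+\max(G_{K,j},Au_\Omega)+v_j\ge u_\partial+Au_\Omega+v_j$. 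Hence $\liminf(u-u_\partial)\ge -C\nu(\Omega\setminus K)^{1/k}$ at $\partial\Omega$ for every $K$, which gives $\liminf(u-u_\partial)\ge0$.
The matching upper bound needs the envelope $(u_\partial|_{\partial\Omega})_*$ of Lemma~\ref{lem:k-convex=000020envelope}; the constant $\max u_\partial$ gives only a global bound.

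\textbf{Minor points.} Your sublevel sets have uniform $\Gamma_{k-1}$-boundary, not $\Gamma_k$-boundary, and that is exactly what the Caffarelli--Nirenberg--Spruck and Trudinger--Wang theorems need. The mollification radius, the quadratic perturbation and the Sard level must depend on $j$, as in Lemma~\ref{thm:k-1=000020konvex}.

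\textbf{Comparison with the paper.} The paper never regularizes $\nu$ or $u_\partial$ and uses no compactness.
It first treats compactly supported $\nu$ with Trudinger--Wang's weak solutions on the $\Gamma_{k-1}$-exhaustion, after replacing $u_\partial$ by its envelope so that $\mu_k[u_\partial]=0$. The barrier $u_\partial+cu_\Omega$ on $\mathcal{U}_N\setminus\bar{\mathcal{U}}_{N_0}$, fed by the Aleksandrov bound on $\partial\mathcal{U}_{N_0}$, plays the role of your $\max$-gluing. Comparison then shows directly that $(u_N)$ is Cauchy in $C_{\mathrm{loc}}(\Omega)$.
General finite $\nu$ is reached through $u_i+u_j^i\le u_j\le u_i$. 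This is your ``outer part'' estimate, turned into a Cauchy argument in $C(\bar\Omega)$.

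So the paper gets convergence of the whole sequence with explicit error bounds, using only the comparison principle and its own Aleksandrov estimate. Your regularize--estimate--compactness scheme needs only classical smooth solvability, the local H\"older bound and Arzel\`a--Ascoli, and it identifies the subsequential limit by uniqueness.

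\textbf{The strictly $k$-convex case.} Your Perron extension is a valid and more direct alternative to the paper's Stone--Weierstrass/Dini Lemma~\ref{lem:Stone=000020Dini} with uniformly converging envelopes. Its barriers are $u_\partial(y_0)\mp\varepsilon\pm Au_{\Omega,y_0}$, normalized so that $u_{\Omega,y_0}(y_0)=0$; the upper one is superharmonic because $k$-convex functions are subharmonic. The paper's version additionally yields an extension with $\mu_k[u_\partial]=0$ and the converse statement.
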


Besides, strict $k$-convexity is necessary to solve all Dirichlet
problems with continuous boundary values, see Lemma \ref{lem:k-convex=000020extension}.
\begin{rem}[Related prior work]
\label{rem:TW=000020existence}Trudinger and Wang \cite[Thm.~1.2]{TrudingerWangHM1}
showed for $k>n/2$ that the Dirichlet problem has a unique solution
if $\Omega$ is open, bounded, has a uniform $\Gamma_{k{-}1}$-boundary,
$\nu$ is the sum of a measure with $L^{1}$-density and a compactly
supported finite Borel measure, and $u_{\partial}\in C(\bar{\Omega})$.
Hartenstine \cite{Hartenstine} proved Theorem \ref{thm:Bern} for
the case $k=n$. For the complex Monge--Ampère equation, Błocki \cite{BlockiLp}
showed that the Dirichlet problem of the complex Monge--Ampère operator
is uniquely solvable on hyperconvex domains (the analog of $k$-convex
sets) for continuous right-hand sides and boundary data as ours.
\end{rem}

\begin{rem}
Froese, Oberman and Salvador presented a numerical scheme to solve
the 2-Hessian equation in 3 dimensions \cite{FroeseObermanSalvador}.
In their Remark 3.4 they write ``The assumption of the existence
of a continuous viscosity solution {[}...{]} is restrictive since
the existence result of Theorem 2.4 requires smooth data, which is
not the case for the examples considered here. {[}...{]} However,
a precise well-posedness result for the (weak) Dirichlet problem is
not presently available.'' In their Section 4, they solve the 2-Hessian
equation on a cube. Our Theorem \ref{thm:Bern} lays a theoretical
fundament that at least a continuous weak solution exists.
\end{rem}

In our second main result, we try to collect what can be said about
discontinuous boundary data and infinite right-hand side measures.
It should be understood as a couple of tools to build solvable Dirichlet
problems. To embrace discontinuous boundary data, the Dirichlet problem
reads
\begin{equation}
\begin{cases}
\mu_{k}[u]=\nu & \text{in }\Omega\\
u-u_{\partial}\in C_{0}(\Omega)
\end{cases}\label{eq:DP=000020discontinuous}
\end{equation}
with $u_{\partial}\in\Phi^{k}(\Omega)$ here, generalizing (\ref{eq:DP_alt}).
\begin{thm}
\label{thm:gedaechtniskirche}Let $k>n/2$, $\Omega\subset\mathbb{R}^{n}$
be open. The set of \emph{solvable right-hand sides}
\[
\RHS:=\left\{ (\mu_{k}[u],u_{\partial})~\middle|~u\in\Phi^{k}(\Omega),u_{\partial}\in C(\Omega),u{-}u_{\partial}\in C_{0}(\Omega)\right\} 
\]
has the following properties:
\begin{enumerate}
\item \label{enu:uniqueness}(uniqueness) For any $(\nu,u_{\partial})\in\RHS$,
there exists only one solution $u\in\Phi^{k}$ with $\mu_{k}[u]=\nu$
and $u-u_{\partial}\in C_{0}(\Omega)$.
\item \label{enu:Klassiker}(equals Theorem \ref{thm:Bern}) If $\Omega$
is bounded and $k$-convex, then $(\nu,u_{\partial})\in\RHS$ for
all finite Borel measures $\nu$ on $\Omega$ and all $u_{\partial}\in\Phi^{k}(\Omega)\cap C(\bar{\Omega})$.
\item \label{enu:Zwischenwertsatz}(subsolution and supersolution) If $(\nu_{1},u_{\partial}),(\nu_{3},u_{\partial})\in\RHS$,
then $(\nu_{2},u_{\partial})\in\RHS$ for all $\nu_{1}\le\nu_{2}\le\nu_{3}$.
\item \label{enu:Addition}(addition) If $(\nu_{1},u_{1}),(\nu_{2},u_{2})\in\RHS$
and there exists $(\nu_{3},u_{1}{+}u_{2})\in\RHS$ with $\nu_{3}\le\nu_{1}+\nu_{2}$,
then also $(\nu_{1}+\nu_{2},u_{1}+u_{2})\in\RHS$.
\end{enumerate}
\end{thm}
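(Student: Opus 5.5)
The four items call for different tools. Item (\ref{enu:Klassiker}) is Theorem \ref{thm:Bern}: given the solution $u\in C(\bar\Omega)$ with $u=u_\partial$ on $\partial\Omega$, the difference $u-u_\partial$ is continuous on $\bar\Omega$ and vanishes on $\partial\Omega$, so it lies in $C_0(\Omega)$. The other three items rest on one tool, a comparison principle for $k$-convex functions with $k>n/2$, which I will prove first:
\[
u,w\in\Phi^{k}(\Omega),\quad \mu_k[u]\ge\mu_k[w],\quad \liminf_{x\to\partial\Omega}(w-u)\ge 0 \;\Longrightarrow\; u\le w .
\]
Here $\liminf_{x\to\partial\Omega}(w-u)\ge0$ means that $\{u-w\ge\varepsilon\}$ is compact for every $\varepsilon>0$. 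Note that $\Omega$ may be unbounded, so this condition also has to hold at infinity. I would argue by contradiction. If $u>w$ somewhere, choose $\delta>0$ small and pass to the component $U$ of $\{u-w>\delta\}$, which is compactly contained in $\Omega$. On $U$, perturb $u$ by $u_\varepsilon=u+\varepsilon(|x-x_0|^2-R^2)$. This gives $\mu_k[u_\varepsilon]\ge\mu_k[u]+c\varepsilon^k\,dx$, which follows from the superadditivity $\mu_k[u+v]\ge\mu_k[u]+\mu_k[v]$, obtained by smooth approximation and (\ref{eq:weak*=000020conti}). On $U$ we then have $\mu_k[u_\varepsilon]>\mu_k[w]$ in the strict sense. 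At the same time $u_\varepsilon\le w+\delta$ on $\partial U$, while $u_\varepsilon>w+\delta$ at an interior point when $\varepsilon$ is small. The standard Trudinger--Wang comparison for measures on a compactly included domain with continuous functions \cite{TrudingerWangHM2} then gives a contradiction. Since $k>n/2$, all functions involved are continuous, so this last step is legitimate.

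Uniqueness, item (\ref{enu:uniqueness}), follows at once. If $u,v$ are two solutions, then $u-v=(u-u_\partial)-(v-u_\partial)\in C_0(\Omega)$, and applying comparison in both directions gives $u=v$.

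For item (\ref{enu:Zwischenwertsatz}) I would use a Perron method. Let $u_1$ and $u_3$ solve the problems with data $(\nu_1,u_\partial)$ and $(\nu_3,u_\partial)$. Comparison gives $u_3\le u_1$, and both agree with $u_\partial$ up to $C_0$. I would take exhaustions $\Omega_j\subset\subset\Omega$ by smooth uniformly convex domains, hence uniformly $\Gamma_{k-1}$-boundary domains, and solve on $\Omega_j$ the problem $\mu_k[v_j]=\nu_2|_{\Omega_j}$ with $v_j=u_1$ on $\partial\Omega_j$. This is the strictly $k$-convex case of Theorem \ref{thm:Bern} with continuous data, and $\nu_2|_{\Omega_j}$ is finite because $\nu_2\le\nu_3$ and $\nu_3$ is locally finite. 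Comparison on $\Omega_j$ yields $u_3\le v_j\le u_1$ there. To pass to the limit, I would extend $v_j$ by $u_1$ outside $\Omega_j$. The result is $k$-convex by the gluing lemma, because $v_j\le u_1$ inside and the two agree on $\partial\Omega_j$. Since $\mu_k[v_j]\le\mu_k[u_1]$ fails in general, the sequence is not obviously monotone in $j$. Instead, I would use local equicontinuity of $k$-convex functions for $k>n/2$, which comes from their H\"older estimate in terms of local bounds \cite{TrudingerWangHM2}; the bound $u_3\le v_j\le u_1$ supplies the local bounds. Extracting a $C_{\mathrm{loc}}$-convergent subsequence and using (\ref{eq:weak*=000020conti}), the limit $u$ satisfies $\mu_k[u]=\nu_2$. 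It also satisfies $u_3\le u\le u_1$, so $u-u_\partial\in C_0(\Omega)$. I expect this step to be the main technical obstacle, in particular the gluing near $\partial\Omega_j$, where only the limit measure matters.

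Item (\ref{enu:Addition}) then follows from item (\ref{enu:Zwischenwertsatz}). Let $w_1$ and $w_2$ solve the problems with data $(\nu_1,u_1)$ and $(\nu_2,u_2)$. Superadditivity gives $\mu_k[w_1+w_2]\ge\nu_1+\nu_2$, and $w_1+w_2-(u_1+u_2)\in C_0(\Omega)$, so $(\mu_k[w_1+w_2],u_1+u_2)\in\RHS$ is a pair bounding $\nu_1+\nu_2$ from above. The hypothesis supplies $(\nu_3,u_1+u_2)\in\RHS$ with $\nu_3\le\nu_1+\nu_2$, which bounds it from below. Item (\ref{enu:Zwischenwertsatz}) therefore gives $(\nu_1+\nu_2,u_1+u_2)\in\RHS$.
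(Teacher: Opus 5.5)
Your proofs of items (\ref{enu:uniqueness}), (\ref{enu:Klassiker}) and (\ref{enu:Addition}) are correct and essentially coincide with the paper's. Your comparison principle, localized to the compact set $\{u-w\ge\delta\}$, is fine and even covers unbounded $\Omega$ cleanly. The genuine gap is in item (\ref{enu:Zwischenwertsatz}): the exhaustion of $\Omega$ by smooth uniformly convex domains $\Omega_j\subset\subset\Omega$ does not exist in general. Every compact subset of $\Omega$ must eventually lie in some $\Omega_j$, so $\Omega$ would have to be an increasing union of convex sets, hence convex. Weakening to domains with uniform $\Gamma_{k-1}$-boundary only reaches $k$-convex $\Omega$ (Lemma \ref{thm:k-1=000020konvex} and the remark after it). For example, an annulus in $\mathbb{R}^n$ with $n\ge2$ admits neither kind of exhaustion. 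Item (\ref{enu:Zwischenwertsatz}), however, is stated for an arbitrary open $\Omega$. The given sub- and supersolution are exactly what replaces any geometric hypothesis. On a non-$k$-convex $\Omega_j$, the only existence statement available for your auxiliary problem would be item (\ref{enu:Zwischenwertsatz}) itself. As written, your argument therefore proves (\ref{enu:Zwischenwertsatz}) only for $k$-convex $\Omega$.

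The missing idea is to solve auxiliary problems only on balls, which is the Perron method the paper uses.
\begin{itemize}
\item Set $u_{2}:=\sup W$ with $W:=\{w\in\Phi^{k}(\Omega)\mid\mu_{k}[w]\ge\nu_{2},\ u_{3}\le w\le u_{1}\}$.
\item Corollary \ref{cor:max4Perron} makes $W$ closed under finite maxima. Hence $u_{2}$ is the limit of an increasing sequence in $W$, taken as maxima over a countable dense subfamily, and (\ref{eq:weak*=000020conti}) gives $\mu_{k}[u_{2}]\ge\nu_{2}$.
\item On any ball $B\subset\subset\Omega$, replace $u_{2}$ by the solution $\tilde{u}_{2}$ of $\mu_{k}[\tilde{u}_{2}]=\nu_{2}$ in $B$ with $\tilde{u}_{2}=u_{2}$ on $\partial B$. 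This exists by Theorem \ref{thm:Bern}, since $\nu_{2}(B)\le\nu_{3}(B)<\infty$.
\item Comparison gives $u_{2}\le\tilde{u}_{2}\le u_{1}$ in $B$, so the glued function is $k$-convex.
\item The lemma that $\mu_{k}[u]\le\mu_{k}[w]$ on $\{u=w\}$ for $u\le w$ gives $\mu_{k}[\tilde{u}_{2}]\ge\nu_{2}$ on $\partial B$. Hence $\tilde{u}_{2}\in W$, so $\tilde{u}_{2}=u_{2}$ and $\mu_{k}[u_{2}]=\nu_{2}$ in $B$.
\end{itemize}

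This also exposes an error in your gluing step, whose justification has the wrong sign. A function equal to $v_{j}$ in $\Omega_{j}$ and to $u_{1}$ outside is $k$-convex only if the inner piece lies \emph{above} the outer one, but you have $v_{j}\le u_{1}$. In one dimension, $u_{1}=0$ and $v_{j}=x^{2}-1$ on $(-1,1)$ produce a concave kink at $\pm1$. In the Perron lifting the sign is right, because the replacement has the smaller measure. In your scheme the gluing was unnecessary anyway. For $j\ge j_{0}$ all $v_{j}$ are defined on $\Omega_{j_{0}}$ with $u_{3}\le v_{j}\le u_{1}$. The interior H\"older estimate plus a diagonal argument then already yields $C_{\mathrm{loc}}(\Omega)$ convergence.
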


\begin{rem}
If the boundary values satisfy $u_{\partial}\in\Phi^{k}(\Omega)\cap C(\bar{\Omega})$,
the ``supersolution'' $(\nu_{1},u_{\partial}):=(0,u_{\partial})\in\RHS$
in (\ref{enu:Zwischenwertsatz}) is readily given by Lemma (\ref{lem:k-convex=000020envelope}).

The statement (\ref{enu:Zwischenwertsatz}) for $k=n$ and discontinuous
boundary data (even in a weaker sense) was somehow hinted in \cite[Thm.~2.1, Cor.~2.2]{trudinger_wang},
but in the depended Lemmas \cite[Lem.~2.4, Lem.~3.3]{trudinger_wang}
continuous boundary values are assumed. The author is not sure if
they had discontinuous boundary data in mind.\medskip{}
The remaining article is structured as follows: In the next section,
we prove Theorem \ref{thm:Bern} and in Section \ref{sec:kreisbeispiel},
we prove Theorem \ref{thm:gedaechtniskirche} and give examples of
a convex functions with discontinuous boundary values, soluble, and
an insoluble Dirichlet problem. In the concluding Section \ref{sec:Open-questions},
we list some open questions and in the Appendix, we cite some theorems
from others, on which our results are founded.
\end{rem}

\section{\protect\label{sec:Dirichlet=000020problem}The Dirichlet problem
on $k$-convex domains}

In this section, we prove Theorem \ref{thm:Bern}. Here, $\Omega$
is always a bounded open subset of $\mathbb{R}^{n}$. The proofs are
prepared by the following lemmas (and the cited results in Appendix
\ref{sec:Theorems-from-others}).

According to \cite{CaffarelliNirenbergSpruck3}, $F_{k}^{1/k}$ is
increasing, homogeneous and concave on $\bar{\Gamma}_{k}$, which
implies the superadditivity

\begin{align}
\mu_{k}[u{+}w] & \ge\mu_{k}[u]+\mu_{k}[w]\label{eq:superadditivity}
\end{align}
for $u,w\in\Phi^{k}$.
\begin{lem}[{Comparison principle, cf.~\cite[Thm.~3.1]{TrudingerWangHM1}, \cite[Lem.~8.2]{WangHessian}}]
\emph{\label{lem:comparison=000020principle}}Let $k>n/2$ and $u,w\in\Phi^{k}(\Omega)$.
If
\begin{align}
 &  & \{u\ge w+\varepsilon\} & \subset\subset\Omega & \text{for all }\varepsilon>0\nonumber \\
 & \text{and} & \mu_{k}[u] & \ge\mu_{k}[w] & \text{in }\Omega,\label{eq:comparison=000020principle}\\
 & \text{then also} & u & \le w & \text{in }\Omega.\nonumber 
\end{align}
\end{lem}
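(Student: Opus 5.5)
The plan is to argue by contradiction and reduce to the classical comparison principle for continuous functions on a compactly contained open set (Trudinger--Wang \cite[Thm.~3.1]{TrudingerWangHM1}, \cite[Lem.~8.2]{WangHessian}). For $k>n/2$ all functions in $\Phi^{k}$ are continuous, which is what makes this reduction possible. So suppose $u(x_{0})>w(x_{0})$ for some $x_{0}\in\Omega$, and set $\eta:=u(x_{0})-w(x_{0})>0$.

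First, I make the inequality of measures strict so that degenerate situations are excluded. Put $K:=\{u\ge w+\eta/4\}$, which is compact by hypothesis, and let $R$ be the diameter of $K$. For small $\delta>0$ define
\[
v:=u+\delta\bigl(\left|x-x_{0}\right|^{2}-R^{2}\bigr).
\]
Then $v\le u$ on $K$ and $v(x_{0})=u(x_{0})-\delta R^{2}$. By superadditivity (\ref{eq:superadditivity}),
\[
\mu_{k}[v]\ge\mu_{k}[u]+c_{n,k}\delta^{k}\,\d x\ge\mu_{k}[w]+c_{n,k}\delta^{k}\,\d x .
\]
Choosing $\delta R^{2}<\eta/4$ gives $v(x_{0})>w(x_{0})+3\eta/4$.

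Next, I localize. Let $G$ be the connected component containing $x_{0}$ of the open set $\{v>w+\eta/2\}$. Since $v\le u$ wherever $u\ge w+\eta/4$, the set $G$ is contained in $K$. Hence $G\subset\subset\Omega$, and $\bar G\subset\{v\ge w+\eta/2\}\subset K$. By continuity of $v$ and $w$ (this is where $k>n/2$ is used), $v=w+\eta/2$ on $\partial G$. Thus $v$ and $\tilde w:=w+\eta/2$ are continuous on $\bar G$, $k$-convex in $G$, and satisfy $v\le\tilde w$ on $\partial G$ and $\mu_{k}[v]\ge\mu_{k}[\tilde w]+c\,\d x$ in $G$. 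The classical comparison principle on $G$ then gives $v\le\tilde w$ in $G$. This contradicts $v(x_{0})>\tilde w(x_{0})$.

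The main obstacle is making sure the cited comparison principle really applies in this weak setting: it must be used for general $k$-convex functions that are merely continuous up to $\partial G$, with arbitrary Hessian measures rather than smooth data. If a self-contained argument is needed, I would prove it via the strict inequality above. Mollify $v$ and $\tilde w$, using that Hessian measures converge weakly$^{*}$ by (\ref{eq:weak*=000020conti}). Replace $G$ by a smooth sublevel domain $\{v_{h}>\tilde w_{h}+\tau\}$, on which the smooth comparison principle holds. Then pass to the limit. The extra term $c\,\delta^{k}\,\d x$ absorbs the error of the mollification on compact subsets, and this is the delicate point.
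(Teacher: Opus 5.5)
Your proof is correct, but it rests on a different black box than the paper's. You reduce the lemma to the Trudinger--Wang comparison principle for functions in $\Phi^{k}(G)\cap C(\bar G)$ with ordered boundary values, applied on a component $G\subset\subset\Omega$ of a superlevel set. The paper uses only the monotonicity Lemma~\ref{lem:monotonicity} on $G=\{u'>w\}$. Since $w\le u'$ in $G$ and $w=u'$ on $\partial G$, that lemma gives $\mu_{k}[w](G)\ge\mu_{k}[u'](G)\ge\mu_{k}[w](G)+c\,|G|$, which is impossible because $\mu_{k}[w](G)<\infty$. This is essentially the standard proof of the principle you cite.

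Two consequences follow for your write-up. First, once you grant the cited principle, your quadratic perturbation is superfluous: apply the principle directly to $u$ and $w+\eta/2$ on $\{u>w+\eta/2\}$, whose closure is compact in $\Omega$ by hypothesis. Second, your mollification fallback would not work as described. $\mu_{k}$ is nonlinear, and weak$^{*}$ convergence of $\mu_{k}[v_{h}]$ and $\mu_{k}[\tilde w_{h}]$ gives no pointwise inequality $F_{k}[v_{h}]\ge F_{k}[\tilde w_{h}]$ (think of $\mu_{k}[w]$ containing a Dirac mass). So the term $c\,\d x$ cannot absorb that error. The strict term is instead what closes the monotonicity argument above, and that is the self-contained route.

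In your favour, your perturbation $\delta(|x-x_{0}|^{2}-R^{2})$, which is $\le0$ on $K$, combined with passing to the component of $x_{0}$, never uses boundedness of $\Omega$. The paper's $+\varepsilon|x|^{2}$ needs boundedness to keep $u'<w$ near $\partial\Omega$. That is harmless under the standing assumption of Section~2, but your version also covers the general open sets of Theorem~\ref{thm:gedaechtniskirche}(\ref{enu:uniqueness}).

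One repair is needed. The inclusion $\{v\ge w+\eta/2\}\subset K$ is false in general, since the perturbation is positive far from $x_{0}$. What holds is $G\subset K$. On $\{u=w+\eta/4\}\subset K$ you have $v\le u<w+\eta/2$, so the connected set $G$ cannot leave $\{u>w+\eta/4\}$. Then $\bar G\subset K$ because $K$ is closed.
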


\begin{proof}
Assume to the contrary that the first and the second condition are
valid, but $u(x_{0})>w(x_{0})$. Let us construct a function $u'$
such that $u'(x_{0})>w(x_{0})$, $u'<w$ in a neighborhood of the
boundary and $\mu_{k}[u']-c\,dx\ge\mu_{k}[w]$ for a small $c>0$.
Subtraction of $(u(x_{0})-w(x_{0}))/2$ from $u$ brings the first
two properties. A subsequent addition of $\varepsilon\left|x\right|^{2}$,
(i.e., let $u'(x):=u(x)+(w(x_{0})-u(x_{0}))/2+\varepsilon\left|x\right|^{2}$)
conserves these properties for sufficiently small $\varepsilon>0$
and leads to $\mu_{k}[u']\geq\mu_{k}[w]+\binom{n}{k}\varepsilon^{k}dx$
by the superadditivity. Therefore $\{u'>w\}$ is open, nonempty and
compactly included in $\Omega$. The monotonicity Lemma \ref{lem:monotonicity}
implies that $\mu_{k}[w](\{u'>w\})\geq\mu_{k}[u'](\{u'>w\})$, which
is a contradiction.
\end{proof}
The comparison principle implies uniqueness for the Dirichlet problem.

For Lemma \ref{thm:k-1=000020konvex} we need the following results
about mollification of $k$-convex functions from \cite{TrudingerWangHM2}.
\begin{lem}[{Mollification of $k$-convex functions, equals \cite[Lem.~2.3]{TrudingerWangHM2}}]
\label{lem:Mollifier}For a spherically symmetric mollifier $\rho\in C_{c}^{\infty}\left(\mathbb{R}^{n}\right)$
satisfying $\rho>0$ for $\left|x\right|<1$, $\rho=0$ for $\left|x\right|\geq1$,
and $\int_{B}\rho\,\d x=1$, let $\rho_{h}(x):=h^{-n}\rho(x/h)$.
Let $u\in\Phi^{k}(\Omega).$ Then the convolution $u*\rho_{h}\in C^{\infty}(\Omega_{h})\cap\Phi^{k}(\Omega_{h})$,
where $\Omega_{h}:=\left\{ x\in\Omega~\middle|~\dist_{\partial\Omega}(x)>h\right\} $.
Moreover, as $h\searrow0$, the sequence $u*\rho_{h}\searrow u$ in
$L_{\mathrm{loc}}^{1}$. For $k>n/2$ even $u_{h}\searrow u$ locally
uniformly.
\end{lem}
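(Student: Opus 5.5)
The strategy is to establish the three assertions of Lemma \ref{lem:Mollifier} one after another: smoothness, $k$-convexity of $u*\rho_{h}$ on $\Omega_{h}$, and monotone convergence. Smoothness is immediate, since $u$ is locally integrable (a $k$-convex function is subharmonic, hence in $L_{\mathrm{loc}}^{1}$ when it is not identically $-\infty$) and $\rho_{h}\in C_{c}^{\infty}$. Then $u_{h}:=u*\rho_{h}\in C^{\infty}(\Omega_{h})$, and all derivatives can be moved onto $\rho_{h}$.

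For $k$-convexity, the plan is to view $u_{h}$ as an average of translates:
\[
u_{h}(x)=\int_{|y|<h}u(x-y)\rho_{h}(y)\,\d y .
\]
Each translate $u(\cdot-y)$ lies in $\Phi^{k}(\Omega_{h})$, since the definition is translation invariant. The key claim is then that a smooth function $v$ is $k$-convex if and only if $\lambda(D^{2}v)\in\bar{\Gamma}_{k}$ pointwise, and that $\bar{\Gamma}_{k}$, as a set of symmetric matrices, is a convex cone. The cone property follows from the concavity of $F_{k}^{1/k}$ quoted from \cite{CaffarelliNirenbergSpruck3}. The main obstacle is that the translates are not smooth, so $D^{2}u_{h}$ cannot be written naively as an average of their Hessians.

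To get around this, I would use the distributional characterization of $k$-convexity. For every positive definite matrix $A$ in the dual cone $\Gamma_{k}^{*}$ (in suitable coordinates these are the $A$ with $\operatorname{tr}(AM)\ge0$ for all $M$ with $\lambda(M)\in\bar{\Gamma}_{k}$), the function $u$ satisfies $\operatorname{tr}(AD^{2}u)\ge0$ in the sense of distributions, i.e.\ $u$ is $A$-subharmonic. This transfers from the viscosity definition to the distributional one through the standard equivalence of viscosity and distributional subsolutions for linear constant-coefficient elliptic operators. The distributional statement is stable under convolution with the nonnegative kernel $\rho_{h}$, so $\operatorname{tr}(AD^{2}u_{h})\ge0$ pointwise for all such $A$. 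Since $\bar{\Gamma}_{k}$ is a closed convex cone, it is the intersection of these half-spaces, and therefore $\lambda(D^{2}u_{h})\in\bar{\Gamma}_{k}$. This step is the one I expect to need the most care.

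For monotonicity in $h$, the radial symmetry of $\rho$ gives
\[
u_{h}(x)=\int_{0}^{1}\bigl(\text{spherical mean of }u\text{ over }\partial B_{rh}(x)\bigr)\,\tilde{\rho}(r)\,\d r .
\]
Because $u$ is subharmonic, its spherical means are nondecreasing in the radius and converge to $u(x)$ as the radius tends to $0$, using upper semicontinuity together with the sub-mean value property. Hence $u_{h}$ decreases as $h\searrow0$, with pointwise limit $u$, and monotone convergence yields $L_{\mathrm{loc}}^{1}$ convergence. For $k>n/2$, $u$ is continuous by \cite[Thm.~2.7]{TrudingerWangHM2}. Pointwise monotone convergence of continuous functions to a continuous limit is then locally uniform by Dini's theorem, which completes the plan.
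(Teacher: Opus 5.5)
Your proof is correct. It is essentially the standard argument behind \cite[Lem.~2.3]{TrudingerWangHM2}, which the paper only cites without proof: the distributional characterization (quoted in the paper's appendix as \cite[Lem.~2.2]{TrudingerWangHM2}) passes to $u*\rho_{h}$, and duality for the closed convex cone $\bar{\Gamma}_{k}$ then gives $k$-convexity. Subharmonicity and spherical means give $u\le u*\rho_{h}\searrow u$, and for $k>n/2$ continuity plus Dini give local uniform convergence. One detail is worth stating explicitly: restricting to positive definite $A$ loses nothing, because $I$ lies in the dual cone, so $A+\varepsilon I$ is positive definite, belongs to the dual cone, and tends to $A$.
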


One of the main ideas to prove existence in Theorem \ref{thm:Bern}
is to reduce it to Trudinger's and Wang's $\Phi^{k}$ existence theorem
(see Remark \ref{rem:TW=000020existence}) which is mainly done by
the following Lemma.
\begin{lem}
\label{thm:k-1=000020konvex}For a $k$-convex set $\Omega$, there
exists a sequence $\mathcal{U}_{1}\subset\subset\mathcal{U}_{2}\subset\subset\cdots$
of uniformly $\bar{\Gamma}_{k-1}$-bounded smooth subsets with $\bigcup_{i\in\mathbb{N}}\mathcal{U}_{i}=\Omega$.
\end{lem}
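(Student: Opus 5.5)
The plan is to take the sets $\mathcal{U}_i$ as sublevel sets of a smoothed strictly $k$-convex exhaustion function. The curvature condition on the boundary then comes from the standard relation between the Hessian of a defining function and the principal curvatures of its level sets.

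\emph{Step 1: make the exhaustion strictly $k$-convex.} Start from the negative exhaustion function $u_\Omega\in\Phi^{k}(\Omega)\cap C_0(\Omega,]-\infty,0[)$. Since $\Omega$ is bounded, we may pick $R$ with $|x|^2<R$ on $\Omega$. Define
\[
v:=u_\Omega+\delta(|x|^2-R)
\]
for a small $\delta>0$. By the superadditivity (\ref{eq:superadditivity}), $v$ is still $k$-convex and $\mu_k[v]\ge\binom{n}{k}(2\delta)^k\,dx$. Moreover $v<0$, $v\le u_\Omega$, and $v$ is bounded below near $\partial\Omega$ by $-\delta R$ plus a small quantity. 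Hence the sublevel sets $\{v<-t\}$ with $t>\delta R$ are compactly contained in $\Omega$.

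\emph{Step 2: smooth.} Exact sublevels of a mollification only approximate the sublevels of $v$, so we work level by level. Fix a sequence of levels $t_i\searrow \delta R$, or more simply use $\{u_\Omega<-\varepsilon_i\}$ with $\varepsilon_i\searrow0$, and choose the mollification parameters $h_i$ small. Lemma \ref{lem:Mollifier} gives locally uniform convergence of $v*\rho_h$ to $v$ (using $k>n/2$). The convolution is smooth and $k$-convex. The mollified quadratic term contributes exactly $2\delta I$ to the Hessian, so $\lambda(D^2(v*\rho_h))\in\lambda(2\delta I)+\bar\Gamma_k\subset\Gamma_k$. Thus $w_i:=v*\rho_{h_i}$ is smooth and uniformly strictly $k$-convex on a neighbourhood of $K_i:=\{u_\Omega\le-\varepsilon_i\}$.

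Next, choose a regular value $c_i$ of $w_i$ (Sard's theorem) such that
\[
\{u_\Omega\le-\varepsilon_i\}\subset\{w_i<c_i\}\subset\subset\{u_\Omega<-\varepsilon_{i+1}\},
\]
which is possible by uniform convergence. Set $\mathcal{U}_i:=\{w_i<c_i\}$, passing to a subsequence if needed so that $\mathcal{U}_i\subset\subset\mathcal{U}_{i+1}$. Since $\varepsilon_i\to0$, the union of these sets is $\Omega$.

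\emph{Step 3: boundary curvature.} This is the key computation and the expected obstacle. At a regular boundary point, rotate coordinates so that $\nu=e_n=Dw/|Dw|$ is the outer normal. Then
\[
D^2w=\begin{pmatrix}|Dw|\,\mathrm{II} & b\\ b^T & w_{nn}\end{pmatrix},
\]
where $\mathrm{II}=\diag(\kappa_1,\dots,\kappa_{n-1})$ holds the principal curvatures (Definition \ref{def:principals}). Expanding $S_k$ of this matrix in its last row and column gives
\[
S_k(D^2w)=w_{nn}\,|Dw|^{k-1}S_{k-1}(\kappa)+|Dw|^kS_k(\kappa)-\sum_j b_j^2|Dw|^{k-2}S_{k-2}(\kappa|j).
\]
From this one has to conclude $\kappa\in\bar\Gamma_{k-1}$, with a uniform bound $S_{k-1}(\kappa)\ge c>0$ on the compact boundary. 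The cleanest route is the known fact that the restriction of a $\Gamma_k$-matrix to a hyperplane has eigenvalues in $\Gamma_{k-1}$. Combined with the sub-level structure, this means that adding $t\nu\otimes\nu$ for large $t$ keeps the matrix in $\Gamma_k$, and letting $t\to\infty$ yields $\kappa\in\Gamma_{k-1}$.

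The remaining care is needed because $D^2w$ restricted to $T\partial\mathcal{U}$ equals $|Dw|\,\mathrm{II}$ exactly. Its eigenvalues therefore lie in $\Gamma_{k-1}^{n-1}$, since the restriction of $\Gamma_k^n$ to a hyperplane lies in $\Gamma_{k-1}^{n-1}$ (Cauchy interlacing / Gårding theory). Strictness $\lambda(D^2w)\in 2\delta+\bar\Gamma_k$ then gives the uniform positivity bound on the boundary.
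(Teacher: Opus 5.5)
Your Step~3 works, but Step~2 does not give $\bigcup_i\mathcal U_i=\Omega$. The reason is that $\delta$ is fixed once and for all in Step~1.

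Uniform convergence only makes $w_i$ close to $v=u_\Omega+\delta(|x|^2-R)$, and the level sets of $v$ are not those of $u_\Omega$. Your sandwich $\{u_\Omega\le-\varepsilon_i\}\subset\{w_i<c_i\}\subset\subset\{u_\Omega<-\varepsilon_{i+1}\}$ forces $w_i<c_i$ on $A_i:=\{u_\Omega=-\varepsilon_i\}$ and $w_i\ge c_i$ on $B_i:=\{u_\Omega=-\varepsilon_{i+1}\}$. With $\eta_i:=\sup_{A_i\cup B_i}|w_i-v|$, this requires
\[
\delta\Bigl(\max_{A_i}|x|^2-\min_{B_i}|x|^2\Bigr)<\varepsilon_i-\varepsilon_{i+1}+2\eta_i .
\]
Take $\Omega=B_1(2e_1)$ and $u_\Omega=|x-2e_1|^2-1$. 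The left side tends to $8\delta$ while the right side tends to $0$, so no admissible $c_i$ exists for large $i$.

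Your alternative with levels $t_i\searrow\delta R$ fails for the same reason. Here $\bigcup_i\{v<-t_i\}=\{u_\Omega+\delta|x|^2<0\}$, which in this example misses all points of $\Omega$ near $3e_1$. In general, $v$ has boundary values $\delta(|x|^2-R)$. A sublevel set $\{v<c\}\subset\subset\Omega$ needs $c\le\delta\min_{\partial\Omega}(|x|^2-R)$, so it stays away from every boundary point where $|x|^2$ exceeds its minimum on $\partial\Omega$.

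The repair is to let the convexifying term vanish along the sequence. This is exactly the role of $|x|^2/N$, together with the shift $c_N\in\,]1/N,1/(N{-}1)[$, in the paper. Put $d_i:=\varepsilon_i-\varepsilon_{i+1}$ and $w_i:=u_\Omega*\rho_{h_i}+\delta_i|x|^2$. Choose $\delta_i\sup_\Omega|x|^2<d_i/6$ and $h_i$ with $|u_\Omega*\rho_{h_i}-u_\Omega|<d_i/6$ on $\{u_\Omega\le-\varepsilon_{i+1}\}$. Then $|w_i-u_\Omega|<d_i/3$ there. For any regular value $c_i\in\,]-\varepsilon_i+d_i/3,\,-\varepsilon_{i+1}-d_i/3[$, the set $\mathcal U_i:=\{w_i<c_i\}\cap\{u_\Omega<-\varepsilon_{i+1}\}$ satisfies your sandwich. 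Hence the sets are nested and exhausting, with boundary in $\{w_i=c_i\}$. The convexity constant $2\delta_i$ now degenerates as $i\to\infty$. That is harmless, because the uniform $\Gamma_{k-1}$ condition concerns each $\mathcal U_i$ separately.

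With this change your argument matches the paper's except in Step~3, where the paper simply invokes Caffarelli--Nirenberg--Spruck (Lemma~\ref{rem:uniform=000020domain}). Your direct route uses three ingredients:
$D^2w|_{T\partial\mathcal U}=|Dw|\,\mathrm{II}$; Cauchy interlacing; and the fact that the $n-1$ smallest entries of $\lambda\in\Gamma_k^n$ lie in $\Gamma_{k-1}^{n-1}$. Compactness of $\partial\mathcal U_i$ then gives the uniform bound. This is correct and more self-contained. The $t\,\nu\otimes\nu$ limit and the $S_k$ expansion are not needed.
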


The proof relies on Caffarelli's, Nirenberg's, and Spruck's result
Lemma \ref{rem:uniform=000020domain}
\begin{proof}
Let $\delta_{1}\ge\delta_{2}\ge\cdots>0$ be a decreasing sequence
such that $|x{-}y|<\delta_{N}$ implies $|u_{\Omega}(x)-u_{\Omega}(y)|<1/N$
for all $x,y\in\bar{\Omega}$. Let $\rho_{h}$ be as in Lemma \ref{lem:Mollifier},
$1/N<c_{N}<1/(N{-}1)$ to be defined later, and
\begin{align*}
u_{N} & :\bar{\Omega}_{\delta_{N}}:=\left\{ \dist_{\mathbb{R}^{n}\setminus\Omega}\ge\delta_{N}\right\} \to\mathbb{R}\\
u_{N} & :=u_{\Omega}\ast\rho_{h}+c_{N}+\left|\cdot\right|^{2}/N.
\end{align*}
This is a smooth $k$-convex function on $\Omega_{\delta_{N}}$ according
to Lemma \ref{lem:Mollifier}. Due to the term $\left|\cdot\right|^{2}/N$,
$F_{k}[u_{N}]$ has a positive lower bound. Let us bound $u_{N}$
from below and above. Let $x\in\bar{\Omega}_{\delta_{N}}$. By the
definition of $\delta_{N}$, the value of $u_{\Omega}$ oscillates
in $B_{\delta_{N}}(x)\subset\Omega$ by at most $1/N$ from $u_{\Omega}(x)$,
and by Lemma \ref{lem:Mollifier}, the convolution term satisfies
\begin{equation}
u_{\Omega}\leq u_{\Omega}*\rho_{h}\leq u_{\Omega}+1/N.\label{eq:Mollification=000020estimate}
\end{equation}
Thus, 
\[
u_{\Omega}+1/N<u_{N}\leq u_{\Omega}+2/(N{-}1)+\left|\cdot\right|^{2}/N.
\]
On $\partial\Omega_{\delta_{N}}$, the definition of $\delta_{N}$
implies $u_{N}(x)>u_{\Omega}(x)+1/N\ge0$. Is a compactly contained
sublevel of the smooth function $u_{N}$ automatically smooth? The
implicit function theorem implies it only for noncritical boundary
points. Indeed, if $x\in\Omega_{\delta_{N}}$ is a saddle point of
$u_{N}$, the set $\{u_{N}=u_{N}(x)\}$ cannot be smooth (and in contrast
to convex functions, $k$-convex functions may have saddle points),
so we need a boundary of noncritical points. Sard's theorem asserts
that the set of critical values, $\{u_{N}(x)\allowbreak~|\allowbreak~\exists x\in\Omega\allowbreak\ \nabla u_{N}(x)=0\}$,
is a null set in $\mathbb{R}$. Therefore, we can always choose a
noncritical value $1/N<c_{N}<1/(N{-}1)$ such that $\nabla u_{N}(x)\neq0$
wherever $u_{N}(x)=0$ and hence
\[
\mathcal{U}_{N}:=\{u_{N}<0\}\subset\subset\Omega_{\delta_{N}}
\]
has a smooth boundary. By Lemma \ref{rem:uniform=000020domain}, the
lower bound for $F_{k}[u_{N}]$ implies that $\mathcal{U}_{N}$ has
a uniform $\Gamma_{k{-}1}$-boundary. The upper bound $u_{N}\leq u_{\Omega}+2/(N{-}1)+\left|\cdot\right|^{2}/N$
and $u_{N}<0$ in $\Omega$ imply $\cup_{N\in\mathbb{N}}\mathcal{U}_{N}=\Omega$.
The monotonicity $u_{N}\ast\rho_{\delta_{N}}\ge u_{N}\ast\rho_{\delta_{N+1}}$
in Lemma \ref{lem:Mollifier} implies that $u_{1}>u_{2}>\cdots$ and
thus the nesting property $\mathcal{U}_{1}\subset\subset\cdots$.
\end{proof}
\begin{rem}
In \cite{Gehring2025kconvex}, we drop the assumption of the boundedness
and show the converse: The limit of a sequence $\mathcal{U}_{1}\subset\cdots$
of $\bar{\Gamma}_{k-1}$-bounded sets is $k$-convex.
\end{rem}

Labutin showed the existence of a $k$-\emph{convex envelope} of the
trace of a $k$-convex function on the boundary for any open bounded
domain:
\begin{lem}[{$k$-convex envelope, see \cite[Thm.~6.3 and text around]{Labutin}}]
\label{lem:k-convex=000020envelope}Let $k>n/2$ and $u\in\Phi^{k}(\Omega)\cap C(\bar{\Omega})$.
Then there exists the $k$\emph{-convex envelope} $(u|_{\partial\Omega})_{*}$,
i.e.\ a unique solution of
\[
\begin{cases}
\mu_{k}[(u|_{\partial\Omega})_{*}]=0 & \text{in }\Omega\\
(u|_{\partial\Omega})_{*}=u & \text{on }\partial\Omega,
\end{cases}
\]
which is given by $(u|_{\partial\Omega})_{*}=\sup\left\{ w\in\Phi^{k}(\Omega)\:\middle|\:w|_{\partial\Omega}\leq u|_{\partial\Omega}\right\} $.
\end{lem}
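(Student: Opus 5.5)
The plan is a Perron-type construction, a Wiener-type test for boundary continuity, and uniqueness from the comparison principle (Lemma \ref{lem:comparison=000020principle}). The envelope will be produced as the upper semicontinuous regularization of
\[
U:=\sup\left\{ w\in\Phi^{k}(\Omega)\:\middle|\:w|_{\partial\Omega}\le u|_{\partial\Omega}\right\}.
\]
Here $w|_{\partial\Omega}\le u|_{\partial\Omega}$ is read as $\limsup_{x\to y}w(x)\le u(y)$ for every $y\in\partial\Omega$. The class is nonempty because it contains $u$ itself. It is also bounded above, by the harmonic function with boundary values $u|_{\partial\Omega}$ or simply by $\max_{\partial\Omega}u$, since every $k$-convex function is subharmonic and satisfies the maximum principle. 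Because $\Phi^{k}$ is closed under finite maxima and under upper semicontinuous regularization of locally bounded suprema (Choquet's lemma), the regularization $U^{*}$ is $k$-convex. For $k>n/2$ it is also continuous, by \cite[Thm.~2.7]{TrudingerWangHM2}.

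\textbf{Interior equation.} Next I would show that $\mu_{k}[U^{*}]=0$ in $\Omega$ by the standard ball-lifting argument. Fix a ball $B\subset\subset\Omega$. On $B$, replace $U^{*}$ by the solution of $\mu_{k}=0$ in $B$ with boundary data $U^{*}|_{\partial B}$; this exists by the Trudinger--Wang theorem of Remark \ref{rem:TW=000020existence}, since a ball is uniformly convex and the data are continuous. Outside $B$, keep $U^{*}$. By comparison on $B$, the lift dominates $U^{*}$, and gluing via a maximum shows the lift is again $k$-convex and belongs to the class. Maximality then forces $U^{*}$ to equal the lift on $B$, so $\mu_{k}[U^{*}]=0$ on every ball. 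Uniqueness of the envelope follows from Lemma \ref{lem:comparison=000020principle} applied in both directions to two continuous solutions with the same boundary values.

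\textbf{Boundary behaviour (main obstacle).} The hard step is proving $U^{*}=u$ on $\partial\Omega$ for an arbitrary bounded open $\Omega$, with no barriers assumed. The lower bound is immediate: $U^{*}\ge u$ because $u$ is in the class, so $\liminf_{x\to y}U^{*}(x)\ge u(y)$. The difficulty is the upper bound $\limsup_{x\to y}U^{*}(x)\le u(y)$.
\begin{itemize}
\item[(i)] First try: for $\varepsilon>0$, use the continuity of $u$ on $\bar{\Omega}$ to find a neighbourhood of $y$ on which $u<u(y)+\varepsilon$. Then compare each member $w$ of the class with a superharmonic majorant built from the harmonic measure of $\Omega$.
\item[(ii)] This only works at Wiener-regular points. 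Irregular points need Labutin's potential theory \cite{Labutin}, namely the $k$-Hessian capacity and the Wiener criterion together with the fact that the set of $k$-irregular points is negligible. This is exactly why the lemma is cited from \cite[Thm.~6.3]{Labutin} rather than reproved.
\end{itemize}
The extension hypothesis $u\in\Phi^{k}(\Omega)\cap C(\bar{\Omega})$ is what compensates at irregular points: $u$ itself serves as a $k$-convex competitor attaining the correct boundary values. So I would combine $U^{*}\ge u$ with the harmonic majorant bound and with Labutin's quasi-everywhere boundary regularity, and then use the continuity of $u$ up to the boundary to upgrade this to the full boundary equality.
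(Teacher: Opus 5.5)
\textbf{There is a genuine gap in the boundary step.} The paper gives no proof of its own; it only cites Labutin. Your interior argument is sound: the Perron envelope, continuity by local equicontinuity, ball lifting with the Trudinger--Wang solution on balls, and uniqueness via Lemma \ref{lem:comparison=000020principle}. The boundary step, however, is flagged as the main obstacle and never closed.

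The harmonic-measure majorant in (i) fails at Laplace-irregular points, and an arbitrary bounded open $\Omega$ may have such points. A quasi-everywhere regularity statement also cannot be upgraded to every boundary point just because $u$ is continuous. The hypothesis $u\in\Phi^k(\Omega)\cap C(\bar\Omega)$ gives only the \emph{lower} bound $U\ge u$.

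In fact, without $k>n/2$ the conclusion is false. Take $k=1$, $n\ge 2$ and $\Omega=B_1(0)\setminus\{0\}$. The function $u=|x|$ is subharmonic and continuous on $\bar\Omega$. But for every $\varepsilon>0$ the class contains $1-\varepsilon|x|^{2-n}$ (resp.\ $1+\varepsilon\log|x|$ if $n=2$). So the envelope is $\equiv 1$ and misses $u(0)=0$. Any valid argument must therefore use $k>n/2$ at the boundary, and your sketch never does.

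\textbf{The missing idea is an explicit upper barrier.} Put $\alpha:=2-n/k$, so $0<\alpha\le 1$. The fundamental solution $|x-y_0|^{\alpha}$ is smooth and $k$-convex on $\R^n\setminus\{y_0\}$, with $F_k[|x-y_0|^{\alpha}]=0$ there. Indeed, with $r=|x-y_0|$ its Hessian eigenvalues are $\alpha r^{\alpha-2}(\alpha-1,1,\dots,1)$, which lie in $\bar\Gamma_k$ with $S_k=0$.

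Given $y_0\in\partial\Omega$ and $\varepsilon>0$, choose $\delta>0$ with $u<u(y_0)+\varepsilon$ on $\bar\Omega\cap B_\delta(y_0)$. Choose $M\ge\delta^{-\alpha}(\max_{\bar\Omega}u-u(y_0))$ and set $v:=u(y_0)+\varepsilon+M|x-y_0|^{\alpha}$. Then:
\begin{itemize}
\item $v\ge u$ on $\partial\Omega$ and $\mu_k[v]=0$ in $\Omega$;
\item every $w$ in the class has $\limsup_{x\to y}(w-v)(x)\le 0$ at each $y\in\partial\Omega$, so $\{w\ge v+\eta\}\subset\subset\Omega$ for all $\eta>0$;
\item $\mu_k[w]\ge 0=\mu_k[v]$.
\end{itemize}
Lemma \ref{lem:comparison=000020principle} then gives $w\le v$. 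Hence $\limsup_{x\to y_0}U(x)\le u(y_0)+\varepsilon$ at \emph{every} boundary point. Combined with $U\ge u$ and the continuity of $u$, this yields the boundary values.

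No Wiener criterion and no ``negligible irregular set'' is needed. For $k>n/2$ points have positive $k$-capacity, so there are no irregular boundary points at all. On the punctured ball above, the $k$-convex envelope of $|x|$ is $|x|^{2-n/k}$, which does attain $0$. If you want to rely on Labutin, this everywhere-regularity for $k>n/2$ is the relevant fact to cite, not quasi-everywhere regularity.
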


Let us use the $k$-convex envelope to construct a Green's function
for $\mu_{k}$ and $k$-convex sets $\Omega$.
\begin{lem}[Green's function]
\label{lem:Green's=000020function}Let $k>n/2$ and $\Omega$ be
$k$-convex, $\delta_{x}$ be the Dirac measure in $x\in\Omega$.
There exists a unique solution $G_{k,\Omega,x}\in C_{0}(\Omega)\cap\Phi^{k}(\Omega)$
of 
\[
\mu_{k}[G_{k,\Omega,x}]=\delta_{x}.
\]
\end{lem}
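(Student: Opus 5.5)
Uniqueness is immediate from the comparison principle (Lemma \ref{lem:comparison=000020principle}). If $G,G'\in C_{0}(\Omega)\cap\Phi^{k}(\Omega)$ both have Hessian measure $\delta_{x}$, then $\{G\ge G'+\varepsilon\}\subset\{G\ge\varepsilon/2\}\cup\{G'\le-\varepsilon/2\}$ up to a compact set. More precisely, both functions vanish at the boundary in the $C_{0}$ sense, so $\{G\ge G'+\varepsilon\}\subset\{|G|\ge\varepsilon/2\}\cup\{|G'|\ge\varepsilon/2\}$, which is compactly contained. The comparison principle then gives $G\le G'$, and by symmetry $G=G'$.

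For existence, I would exhaust $\Omega$ by the smooth uniformly $\Gamma_{k-1}$-bounded sets $\mathcal{U}_{1}\subset\subset\mathcal{U}_{2}\subset\subset\cdots$ of Lemma \ref{thm:k-1=000020konvex}, with $x\in\mathcal{U}_{1}$. On each $\mathcal{U}_{N}$, Trudinger--Wang's theorem (Remark \ref{rem:TW=000020existence}) applies to the compactly supported finite measure $\delta_{x}$, and gives $G_{N}\in\Phi^{k}(\mathcal{U}_{N})\cap C(\bar{\mathcal{U}}_{N})$ with $\mu_{k}[G_{N}]=\delta_{x}$ and $G_{N}=0$ on $\partial\mathcal{U}_{N}$. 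Since $G_{N+1}\le0$ on $\partial\mathcal{U}_{N}$ (as $G_{N+1}$ is subharmonic with zero boundary values), comparison on $\mathcal{U}_{N}$ gives $G_{N+1}\le G_{N}$. So the sequence decreases on every fixed compact set. In the limit I would set $G:=\lim G_{N}$.

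The key step is a uniform lower bound that also forces decay to $0$ at $\partial\Omega$. I would build a barrier from the negative exhaustion function $u_{\Omega}$. Let $c:=-\min_{\bar{\mathcal{U}}_{1}}u_{\Omega}>0$, so $\mathcal{U}_{1}\subset\{u_{\Omega}\ge -c\}$... which is the wrong direction. Instead, I choose a small closed ball $\bar B_{r}(x)\subset\subset\Omega$ and use the known explicit fundamental solution for $k>n/2$: $\Gamma(y)=-a|y-x|^{2-n/k}$ (suitably normalized, with $\mu_{k}[\Gamma]=\delta_{x}$), which is continuous and bounded. Set
\[
w:=\max\bigl(\Gamma-M,\;A\,u_{\Omega}\bigr)
\]
with $A,M$ chosen as follows. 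Take $A$ large so that $A\,u_{\Omega}<\Gamma-M$ near $x$; this needs $M$ fixed first, with $\Gamma-M<0$ on $\Omega$. Then take $A$ large enough that $A\,u_{\Omega}\le\inf_{B_{r}}(\Gamma-M)$ on $\bar B_{r}(x)$. Near $\partial\Omega$ we have $Au_{\Omega}\to0>\Gamma-M$, so $w=Au_{\Omega}$ there. The maximum of $k$-convex functions is $k$-convex, and $w=\Gamma-M$ near $x$. I expect the main obstacle here is verifying that $\mu_{k}[w]\ge\delta_{x}$: near $x$ it equals $\mu_{k}[\Gamma]=\delta_{x}$, and elsewhere it is nonnegative, so this should hold since Hessian measures are local. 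Then $w\le0$ on $\partial\mathcal{U}_{N}$ with $\mu_{k}[w]\ge\mu_{k}[G_{N}]$, and comparison gives $w\le G_{N}\le0$ on every $\mathcal{U}_{N}$.

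Hence $w\le G\le0$. Since $Au_{\Omega}\in C_{0}$ and $G\ge w$ near the boundary, $G$ tends to $0$ at the boundary. The sequence $G_{N}$ is decreasing and bounded below by a continuous function, and $k$-convex functions with $k>n/2$ are locally uniformly Hölder (a Trudinger--Wang estimate from the appendix). So the convergence is locally uniform, which keeps $G\in\Phi^{k}$ continuous, and by (\ref{eq:weak*=000020conti}) we get $\mu_{k}[G]=\lim\mu_{k}[G_{N}]=\delta_{x}$ weakly* on $\Omega$. If the explicit fundamental solution is not available from the cited results, I would instead take $\Gamma:=G_{1}$ extended by a max-construction, which works the same way because $G_{1}<0$ in $\mathcal{U}_{1}$.
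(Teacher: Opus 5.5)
Your overall strategy works, but the barrier step fails as written because $\Gamma$ has the wrong sign. For $k>n/2$ the exponent $\alpha:=2-n/k$ lies in $]0,1]$. Then $-a|y-x|^{\alpha}$ is strictly superharmonic away from $x$: $\Delta(-r^{\alpha})=-\alpha(\alpha+n-2)r^{\alpha-2}<0$ for $n\ge2$, and for $n=k=1$ it is the concave function $-a|y-x|$. So it is not $k$-convex, $\mu_{k}[\Gamma]=\delta_{x}$ is meaningless, and $w=\max(\Gamma-M,Au_{\Omega})$ is not $k$-convex near $x$. Consequently the comparison with $G_{N}$ cannot be invoked. The minus sign belongs to the case $k<n/2$, where the exponent is negative.

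For $k>n/2$ take instead $\Gamma(y):=a|y-x|^{\alpha}$ with $a>0$. Away from $x$ its Hessian has eigenvalues $c(\alpha-1)$ (once) and $c$ ($n-1$ times), with $c=a\alpha|y-x|^{\alpha-2}$. This gives $S_{j}=c^{j}\binom{n-1}{j-1}\bigl(\frac{n-j}{j}-\frac{n-k}{k}\bigr)\ge0$ for $j\le k$, with equality for $j=k$, and at $x$ no quadratic touches $\Gamma$ from above. Hence $\mu_{k}[\Gamma]=m\,\delta_{x}$ with $m<\infty$ by local finiteness. Also $m>0$: otherwise the comparison principle on $B_{R}(x)$ against the constant $aR^{\alpha}$ would force $\Gamma\ge aR^{\alpha}$ there. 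Normalize $a$ by homogeneity. With this sign you need $M>a\diam(\Omega)^{\alpha}$ to get $\Gamma-M<0$ on $\Omega$. After that, the rest of your argument goes through: the choice of $A$, $\mu_{k}[w]\ge\delta_{x}$ by locality, $w\le G_{N}\le0$, monotone locally uniform convergence, weak$^{*}$ continuity, and $w\le G$ giving $G\in C_{0}(\Omega)$.

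Your fallback needs more care than stated. $\max(G_{1},Au_{\Omega})$ does not glue across $\partial\mathcal{U}_{1}$, since $G_{1}=0>Au_{\Omega}$ there. You would first have to extend $G_{1}$ to a global $k$-convex function by gluing it with something lying above it near $\partial\mathcal{U}_{1}$ and below it near $x$. One choice is $B(|y-x|^{2}-\rho^{2})$ with $\rho<\dist(x,\partial\mathcal{U}_{1})$ and $B\rho^{2}>|G_{1}(x)|$; after that your main construction applies.

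With that correction your proof is complete, and it takes a different route from the paper. The paper gets a function whose measure is a single atom at $x$ in one stroke. It takes the $k$-convex envelope $w$ of $u_{\Omega}$ with data on $\partial\Omega\cup\{x\}$, i.e.\ Labutin's envelope on the punctured domain $\Omega\setminus\{x\}$, so that $\mu_{k}[w](\Omega\setminus\{x\})=0$ and $w$ vanishes at $\partial\Omega$. The atom is positive because the only $k$-convex $C_{0}$ function with vanishing measure is $0$, and finite by local finiteness; one then rescales by homogeneity. That takes a few lines and needs neither the exhaustion lemma nor a fundamental solution. It does lean on Labutin's result for a domain with an isolated boundary point, and tacitly on $w$ being $k$-convex across $x$; the latter holds since $w\ge u_{\Omega}$ with equality at $x$.

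Your argument is essentially the compact-support step of the proof of Theorem \ref{thm:Bern} specialized to $\nu=\delta_{x}$. The Aleksandrov estimate, which would be circular here because it is built on the Green's function, is replaced by your explicit barrier. Your route is longer and relies on Lemma \ref{thm:k-1=000020konvex} and Trudinger--Wang's theorem. In exchange it yields the quantitative bound $\max(\Gamma-M,Au_{\Omega})\le G_{k,\Omega,x}\le0$ and the monotone approximation $G_{k,\mathcal{U}_{N},x}\searrow G_{k,\Omega,x}$.
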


\begin{proof}
Let $u_{\Omega}\in\Phi^{k}(\Omega)\cap C_{0}(\Omega,\left]-\infty,0\right[)$
be a negative exhaustion function for $\Omega$ and $w:=(u_{\Omega}|_{\partial\Omega\cup\{x\}})_{*}$
(apply Lemma \ref{lem:k-convex=000020envelope} to $\Omega\setminus\{x\}$)
such that $\mu_{k}[w](\Omega\setminus\{x\})=0$. Additionally $w(x)=u_{\Omega}(x)<0$,
which implies that $\mu_{k}[w](\{x\})>0$, because the only function
$w\in C_{0}(\Omega)\cap\Phi^{k}(\Omega)$ with $\mu_{k}[w]=0$ is
the zero function (according to the comparison principle). On the
other hand, $\mu_{k}[w]$ is locally finite and therefore $\mu_{k}[w](\{x\})<\infty$.
Thus, $w/\mu_{k}[w](\{x\})^{1/k}$ is the wanted Green's function.
\end{proof}
Since $G_{k,\Omega,x}$ is subharmonic and $G_{k,\Omega,x}(x)<0$,
it is negative on $\Omega$ by the strong maximum principle for subharmonic
functions \cite[Thm.~2.2, p.~15]{GilbargTrudinger2001}. By the comparison
principle (\ref{eq:comparison=000020principle}), the Green's function
is monotone in $\Omega$, i.e.
\begin{equation}
x\in\Omega_{1}\subset\Omega_{2}\quad\text{implies}\quad G_{k,\Omega_{1},x}\geq G_{k,\Omega_{2},x}|_{\Omega_{1}}.\label{eq:Green's=000020monotonicity}
\end{equation}
The next Lemma and Corollary use the notions of the $k$-convex envelope
and the Green's function to derive an Aleksandrov estimate.
\begin{lem}[Pointwise comparison principle]
\label{lem:Pointwise=000020comparison=000020principle}Let $k>n/2$,
$x\in\Omega$, and let two functions $u,w\in\Phi^{k}(\Omega)\cap C(\bar{\Omega})$
satisfy
\begin{align}
 &  & u & \le w & \text{on }\partial\Omega\nonumber \\
 & \text{and} & \mu_{k}[u](\{x\}) & \ge\mu_{k}[w](\Omega). & \text{in }\Omega.\label{eq:weighted=000020comparison=000020principle}\\
 & \text{Then also} & u(x) & \le w(x).\nonumber 
\end{align}
\end{lem}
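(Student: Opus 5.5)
We plan to replace $w$ by a function that has the same boundary values and the same value at $x$, but whose whole Hessian mass sits at $x$; then the ordinary comparison principle (Lemma \ref{lem:comparison=000020principle}) applies. As in the proof of Lemma \ref{lem:Green's=000020function}, we define
\[
v:=(w|_{\partial\Omega\cup\{x\}})_{*}=\sup\left\{ w'\in\Phi^{k}(\Omega)\:\middle|\:w'\le w\text{ on }\partial\Omega\cup\{x\}\right\} .
\]
This is Lemma \ref{lem:k-convex=000020envelope} applied on the open bounded set $\Omega\setminus\{x\}$; that lemma needs no convexity of the domain, and $w\in\Phi^{k}(\Omega\setminus\{x\})\cap C(\overline{\Omega\setminus\{x\}})$. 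We will use three properties of $v$.
\begin{enumerate}
\item $v\in\Phi^{k}(\Omega)\cap C(\bar{\Omega})$, $v=w$ on $\partial\Omega$, and $v(x)=w(x)$.
\item $\mu_{k}[v](\Omega\setminus\{x\})=0$.
\item $v\ge w$ in $\Omega$, because $w$ itself is an admissible competitor in the supremum.
\end{enumerate}
For property 1, the supremum over functions that are $k$-convex on all of $\Omega$ is $k$-convex on $\Omega$; we identify it with the envelope on $\Omega\setminus\{x\}$ exactly as the paper does in Lemma \ref{lem:Green's=000020function}.

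Next we bound the point mass of $v$. Since $v\ge w$ in $\Omega$ and $v=w$ on $\partial\Omega$, the monotonicity Lemma \ref{lem:monotonicity} should give
\[
\mu_{k}[v](\{x\})=\mu_{k}[v](\Omega)\le\mu_{k}[w](\Omega).
\]
A convenient way to invoke it is on the sets $\{w+\varepsilon>v\}$. These contain a neighborhood of $\partial\Omega$, and they contain $x$ because $v(x)=w(x)$. Their complements $\{v\ge w+\varepsilon\}$ are compact in $\Omega$ by continuity up to the boundary. Letting $\varepsilon\searrow0$ then exhausts $\Omega$.

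Combining this with the hypothesis (\ref{eq:weighted=000020comparison=000020principle}) gives, as measures on $\Omega$,
\[
\mu_{k}[u]\ge\mu_{k}[u](\{x\})\,\delta_{x}\ge\mu_{k}[w](\Omega)\,\delta_{x}\ge\mu_{k}[v](\{x\})\,\delta_{x}=\mu_{k}[v].
\]
Moreover $u\le w=v$ on $\partial\Omega$, and $u,v\in C(\bar{\Omega})$. Hence each set $\{u\ge v+\varepsilon\}$ is closed in $\bar{\Omega}$ and avoids $\partial\Omega$, so it is compactly contained in $\Omega$. Lemma \ref{lem:comparison=000020principle} then yields $u\le v$ in $\Omega$, and in particular $u(x)\le v(x)=w(x)$.

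The main obstacle is the mass bound $\mu_{k}[v](\Omega)\le\mu_{k}[w](\Omega)$. It requires applying the monotonicity lemma on sets that exhaust $\Omega$ while the two functions touch at the interior point $x$. If the monotonicity lemma does not directly cover this, we would first try perturbing $w$ to $w-\eta$ near $x$ with a small cutoff, and pass to the limit using the weak$^{*}$ continuity (\ref{eq:weak*=000020conti}). Confirming that $v$ is continuous at $x$ is a secondary technical point, handled as in Lemma \ref{lem:Green's=000020function}.
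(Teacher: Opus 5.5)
Your proposal is correct and is essentially the paper's proof: you replace $w$ by the envelope $v$ (the paper's $\check{w}$) of $w$ on $\Omega\setminus\{x\}$, use the monotonicity Lemma \ref{lem:monotonicity} to get $\mu_{k}[v]=\mu_{k}[v](\Omega)\delta_{x}\le\mu_{k}[w](\Omega)\delta_{x}\le\mu_{k}[u]$, and conclude with the comparison principle. The step you flag as the main obstacle is just a direct application of Lemma \ref{lem:monotonicity} to $w\le v$ in $\Omega$ with $w=v$ on $\partial\Omega$, since that lemma allows the functions to touch at interior points; so you need neither the exhaustion by the sets $\{w+\varepsilon>v\}$ (which in fact shrink to $\{v=w\}$ rather than exhaust $\Omega$ as $\varepsilon\searrow0$) nor the perturbation fallback.
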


\begin{proof}
Let $\check{w}\ge w$ be the $k$-convex envelope of $\check{w}=w$
on $\partial\Omega$ and $\check{w}(x)=w(x)$ (use Lemma \ref{lem:k-convex=000020envelope}
for $\Omega\setminus\{x\}$). The monotonicity (Lemma \ref{lem:monotonicity})
says that 
\[
\mu_{k}[\check{w}]=\mu_{k}[\check{w}](\Omega)\delta_{x}\le\mu_{k}[w](\Omega)\delta_{x}\le\mu_{k}[u].
\]
Therefore, the comparision principle (\ref{eq:comparison=000020principle})
applied to $u$ and $\check{w}$ implies that $u(x)\leq\check{w}(x)=w(x)$.
\end{proof}
\begin{cor}[Aleksandrov estimate]
\label{cor:AMP=000020mit=000020Randdaten}Let $k>n/2$, $\Omega$
be $k$-convex, and $u\in\Phi^{k}(\Omega)\cap C(\overline{\Omega})$.
Then 
\[
u-\left(u|_{\partial\Omega}\right)_{*}\geq\mu_{k}[u](\Omega)^{1/k}G_{k,\Omega,x}.
\]
\end{cor}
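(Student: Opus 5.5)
The plan is to apply the pointwise comparison principle (Lemma \ref{lem:Pointwise=000020comparison=000020principle}) at the fixed point $x\in\Omega$. The two functions are $u$ and the candidate lower barrier
\[
w:=\left(u|_{\partial\Omega}\right)_{*}+\mu_{k}[u](\Omega)^{1/k}G_{k,\Omega,x}.
\]
The lemma has to be used with the roles arranged so that it yields $w(x)\le u(x)$. Since the comparison there bounds the function with the \emph{larger} point mass at $x$ from above, the function carrying the Dirac mass should be $w$.

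First I check that $w$ is admissible. By Lemma \ref{lem:k-convex=000020envelope} the envelope $(u|_{\partial\Omega})_{*}$ lies in $\Phi^{k}(\Omega)\cap C(\bar\Omega)$ and equals $u$ on $\partial\Omega$. By Lemma \ref{lem:Green's=000020function}, $G_{k,\Omega,x}\in C_{0}(\Omega)\cap\Phi^{k}(\Omega)$, so it extends continuously by $0$ to $\bar\Omega$. Hence $w\in\Phi^{k}(\Omega)\cap C(\bar\Omega)$ and $w=u$ on $\partial\Omega$. For the measures, superadditivity (\ref{eq:superadditivity}) and the $k$-homogeneity $\mu_{k}[tv]=t^{k}\mu_{k}[v]$ give
\[
\mu_{k}[w]\ge\mu_{k}[u](\Omega)\,\delta_{x},\qquad\text{so}\qquad \mu_{k}[w](\{x\})\ge\mu_{k}[u](\Omega).
\]
Lemma \ref{lem:Pointwise=000020comparison=000020principle}, applied with $w$ in the role of its ``$u$'' and $u$ in the role of its ``$w$'', then gives $w(x)\le u(x)$. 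This is the claimed inequality at the point $x$. Since $x$ was arbitrary (the statement implicitly quantifies over $x\in\Omega$), the estimate holds pointwise everywhere.

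Two details need care. First, $\mu_{k}[u](\Omega)$ may be infinite. In that case the right-hand side is $-\infty$ (as $G<0$) and there is nothing to prove; alternatively one can state the corollary for finite mass. Second, the pointwise comparison requires the boundary inequality on $\partial\Omega$ in the continuous sense. This is fine here because both functions are continuous up to $\bar\Omega$ and coincide on $\partial\Omega$.

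The main obstacle is making sure the direction of Lemma \ref{lem:Pointwise=000020comparison=000020principle} matches. Its proof constructs the envelope of the comparison function relative to $\partial\Omega\cup\{x\}$. In our application this is the envelope of $u$, whose measure is concentrated at $x$ with mass at most $\mu_{k}[u](\Omega)$ by the monotonicity lemma. That measure is dominated by $\mu_{k}[w]$, and the global comparison principle then closes the argument. The only remaining point is the homogeneity of $\mu_{k}$, which follows from the $C^{2}$ case via mollification (Lemma \ref{lem:Mollifier}) and the weak$^{*}$ continuity (\ref{eq:weak*=000020conti}).
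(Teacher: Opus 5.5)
Your proof is correct and matches the paper's argument: the paper also sets $\tilde u:=(u|_{\partial\Omega})_*+\mu_k[u](\Omega)^{1/k}G_{k,\Omega,x}$, uses superadditivity to get $\mu_k[\tilde u]\ge\mu_k[u](\Omega)\delta_x$, and applies the pointwise comparison principle with $\tilde u$ in the role of its first function to conclude $\tilde u(x)\le u(x)$. Your extra remarks are details the paper leaves implicit: the case $\mu_k[u](\Omega)=\infty$, the $k$-homogeneity of $\mu_k$, and reading the estimate as holding at the pole $x$ (the full functional inequality in $y$ would be false in general).
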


\begin{proof}
Let $\tilde{u}:=\left(u|_{\partial\Omega}\right)_{*}+\mu_{k}[u](\Omega)^{1/k}G_{k,\Omega,x}$.
The superadditivity (\ref{eq:superadditivity}) says that
\begin{align*}
\mu_{k}[\tilde{u}] & \ge\mu_{k}[u](\Omega)\delta_{x}
\end{align*}
and the pointwise comparison principle (Lemma \ref{lem:Pointwise=000020comparison=000020principle})
implies that 
\[
\left(u|_{\partial\Omega}\right)_{*}(x)+G_{k,\Omega,x}(x)\mu_{k}[u](\Omega)^{1/k}=\tilde{u}(x)\leq u(x).
\]
\end{proof}
\begin{proof}[Proof of Theorem \ref{thm:Bern}]
Compared to the established existence proofs of the Monge--Ampère
case (cf.\ \cite{Gutierrez,Hartenstine,trudinger_wang,Figalli2017}),
the proof here works without the fact that the peak of the Green's
function (i.e.\ the factor $G_{k,\Omega,x}(x)$ in the Aleksandrov
estimate (Corollary \ref{cor:AMP=000020mit=000020Randdaten})) decays
at the boundary. The reader is invited to read the proof assuming
$u_{\partial}=0$ at first, to comprehend the idea clearlier.

\emph{The uniqueness follows from the comparison principle (\ref{eq:comparison=000020principle}).}

\emph{At first, let us assume that $\nu$ is compactly supported in
$\Omega$.} By Lemma \ref{thm:k-1=000020konvex}, there exists a sequence
$\mathcal{U}_{1}\subset\subset\mathcal{U}_{2}\subset\subset\cdots$
of domains with $\Gamma_{k{-}1}$-boundary and $\bigcup_{N\in\mathbb{N}}\mathcal{U}_{N}=\Omega$.
By Lemma \ref{lem:k-convex=000020envelope}, we can replace $u_{\partial}$
by $(u_{\partial}|_{\partial\Omega})_{*}$, keeping the boundary values,
i.e.\ in the following we assume that $\mu_{k}[u_{\partial}]=0$.
Let $N_{0}$ be sufficiently large such that $\supp\nu\subset\mathcal{U}_{N_{0}}.$
Now, by Trudinger's and Wang's existence result (see Remark \ref{rem:TW=000020existence}),
let $u_{N}\in C(\mathcal{U}_{N})$ be the solution of
\begin{align*}
\begin{cases}
\mu_{k}[u_{N}]=\nu & \text{in }\mathcal{U}_{N}\\
u_{N}=u_{\partial} & \text{on }\partial\mathcal{U}_{N}
\end{cases}
\end{align*}
for $N\geq N_{0}$. The aim is to show that $u_{N}$ is a Cauchy sequence
in $C_{\mathrm{loc}}(\Omega)$ (i.e., a Cauchy sequence when restricted
to any compact subset) and converges to one (and the only) solution
of (\ref{eq:DP_alt}). By the Aleksandrov estimate (Corollary \ref{cor:AMP=000020mit=000020Randdaten})
and the monotonicity of $G_{k,\Omega,x}$ (\ref{eq:Green's=000020monotonicity}),
$x\in\mathcal{U}_{N}$ satisfies 
\[
u_{N}(x)-u_{\partial}(x)\ge G_{k,\mathcal{U}_{N},x}(x)\mu_{k}[u_{N}](\mathcal{U}_{N})^{\frac{1}{k}}\ge G_{k,B_{\diam(\Omega)}(x),x}(x)\nu(\Omega)^{\frac{1}{k}}=:-C_{k,\Omega,\nu}.
\]
Choose another constant $c>0$ sufficiently large such that $cu_{\Omega}\le-C_{k,\Omega,\nu}\le u_{N}-u_{\partial}$
on $\partial\mathcal{U}_{N_{0}}$ depending on $N_{0}$ but not on
$N$, choose $\varepsilon>0$, and $N_{1}>N_{0}$ sufficiently large
that $cu_{\Omega}\geq-\varepsilon$ outside $\mathcal{U}_{N_{1}}$.
Now we apply the comparison principle (\ref{eq:comparison=000020principle})
to $u_{\partial}+cu_{\Omega}$, $u_{N}$, and $\mathcal{U}_{N}\setminus\bar{\mathcal{U}}_{N_{0}}$
for $N\ge N_{1}$: On $\partial\mathcal{U}_{N}$, $u_{\partial}+cu_{\Omega}\le u_{\partial}=u_{N}$.
On $\partial\mathcal{U}_{N_{0}}$, $u_{\partial}+cu_{\Omega}\le u_{\partial}-C_{k,\Omega,\nu}\leq u_{N}$.
Furthermore, $\mu_{k}[u_{\partial}+cu_{\Omega}]\ge0=\mu_{k}[u_{N}]$
in $\mathcal{U}_{N}\setminus\bar{\mathcal{U}}_{N_{0}}$. Consequently,
$u_{\partial}+cu_{\Omega}\leq u_{N}$ in $\mathcal{U}_{N}\setminus\bar{\mathcal{U}}_{N_{0}}$
and particularly
\begin{equation}
u_{\partial}-\varepsilon\le u_{\partial}+cu_{\Omega}\leq u_{N}\quad\text{in }\mathcal{U}_{N}\setminus\mathcal{U}_{N_{1}}.\label{eq:partial=000020nahe=000020int}
\end{equation}
Now we apply the comparison principle to $u_{N}$, $u_{N_{1}}$, $u_{N}+\varepsilon$,
and $\mathcal{U}_{N_{1}}$: $u_{N}\leq u_{N_{1}}=u_{\partial}\leq u_{N}+\varepsilon$
on $\partial\mathcal{U}_{N_{1}}$ and $\mu_{k}[u_{N}]=\mu_{k}[u_{N_{1}}]$
imply 
\[
u_{N}\le u_{N_{1}}\leq u_{N}+\varepsilon\quad\text{in }\mathcal{U}_{N_{1}}.
\]
Therefore $u_{N}$ is a Cauchy sequence in $C(\mathcal{U}_{N_{1}})$.
Let $u$ be the limit of this locally uniformly convergent sequence.
Since (\ref{eq:partial=000020nahe=000020int}) and the comparison
principle imply $u_{\partial}-\varepsilon\le u_{N}\le u_{\partial}$
outside $\mathcal{U}_{N_{1}}$, $u$ is continuously extendable by
$u_{\partial}$ on $\partial\Omega$. The weak{*} continuity (\ref{eq:weak*=000020conti})
concludes that $u$ solves (\ref{eq:DP_alt}).

\emph{Now we drop the assumption that $\nu$ is compactly supported
in $\Omega$.} Let $K_{1}\subset K_{2}\subset\cdots$ be a sequence
of compact sets with $\bigcup_{i=1}^{\infty}K_{i}=\Omega$ and $u_{j}$
be the solution of 
\[
\begin{cases}
\mu_{k}[u_{j}]=\nu(\cdot\cap K_{j}) & \text{in }\Omega\\
u_{j}=u_{\partial} & \text{on }\partial\Omega,
\end{cases}
\]
and $u_{j}^{i}$ be the solution of 
\[
\begin{cases}
\mu_{k}[u_{j}^{i}]=\nu(\cdot\cap K_{j}\setminus K_{i}) & \text{in }\Omega\\
u_{j}^{i}=0 & \text{on }\partial\Omega.
\end{cases}
\]
Since $\nu(\Omega)<\infty$, the Aleksandrov estimate Corollary \ref{cor:AMP=000020mit=000020Randdaten}
shows that for any $\varepsilon>0$ there exists an $i_{0}\in\mathbb{N}$
such that $\nu(\Omega\setminus K_{i_{0}})$ is sufficiently small
such that $u_{j}^{i}\geq-\varepsilon$ for any $j\geq i\geq i_{0}$.
The superadditivity and the comparison principle imply that $u_{i}+u_{j}^{i}\leq u_{j}\leq u_{i},$
hence $0\leq u_{i}-u_{j}\leq\varepsilon.$ Therefore, $u_{j}$ is
a Cauchy sequence in $C(\bar{\Omega})$ and converges to a limit $u$
with boundary values $u_{\partial}$ uniformly.

\emph{The statement about strict $k$-convex domains follows immediately
from the next Lemma \ref{lem:k-convex=000020extension}.}
\end{proof}
Alternatively, the first step of the proof of the existence (when
the right-hand side measure has compact support) could be proven without
Lemma \ref{thm:k-1=000020konvex} in the following way: At first,
Theorem \ref{thm:gedaechtniskirche}(\ref{enu:Zwischenwertsatz})
is proved for \emph{discrete measures} $\nu$, i.e.\ measures which
are linear combinations of Dirac measures. Since the restriction of
such a measure to any open set is compactly supported, at the point
where we use Theorem \ref{thm:Bern} in the proof of Theorem \ref{thm:gedaechtniskirche}(\ref{enu:Zwischenwertsatz}),
the existence theorem of Trudinger and Wang (see Remark \ref{rem:TW=000020existence})
implies the desired statement. Then Theorem \ref{thm:gedaechtniskirche}(\ref{enu:Zwischenwertsatz})
is used to solve the equation for discrete measures (an upper barrier
is given by the $k$-convex envelope $(u_{\partial}|_{\partial\Omega})_{*}$,
a lower barrier by $u_{\partial}+C(\sum_{i=1}^{N}G_{k,\Omega,x_{i}})$
with $x_{i}$ being the points where the measure is concentrated).
Furthermore, for a compact $K\subset\Omega$, a constant $0<C<\infty$
and $u_{\partial}\in\Phi^{k}(\Omega)\cap C(\bar{\Omega})$, the set
\[
\left\{ u\in\Phi^{k}(\Omega)\cap C(\bar{\Omega})~\middle|~\mu_{k}[u](\Omega\setminus K)=0,\mu_{k}[u](K)\le C,u=u_{\partial}\right\} 
\]
is uniformly bounded by the Aleksandrov estimate, and therefore compact
in $C(K)$ by Corollary \ref{cor:Grenzwerts=0000E4tze} and also in
$C(\bar{\Omega})$ by simple arguments. Hence, a sequence of $u_{m}$
with discrete measures $\nu_{m}$ weakly converging to a general compactly
supported finite Borel measure $\nu$ has a subsequence converging
to a solution $u$ of the Dirichlet problem.
\begin{lem}
\label{lem:k-convex=000020extension}Let $\Omega$ be $k$-convex.
If and only if it is strictly $k$-convex, any $u_{\partial}\in C(\partial\Omega)$
can be extended to a $k$-convex function $u_{\partial}\in\Phi^{k}(\Omega)\cap C(\bar{\Omega})$
with $\mu_{k}[u_{\partial}]=0$.
\end{lem}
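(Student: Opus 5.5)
The plan is to prove the two directions separately, using a Perron-type construction for sufficiency and a peak-function construction for necessity.

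\emph{Necessity.} Assume that every $u_{\partial}\in C(\partial\Omega)$ admits such an extension, and fix $y_{0}\in\partial\Omega$. We extend the continuous function $g(y):=-|y-y_{0}|$ on $\partial\Omega$ to some $w\in\Phi^{k}(\Omega)\cap C(\bar{\Omega})$ with $\mu_{k}[w]=0$. Every $k$-convex function is subharmonic, so $w$ attains its maximum over $\bar{\Omega}$ on $\partial\Omega$. On $\partial\Omega$ this maximum is $0$ and is attained only at $y_{0}$. To make the maximum strict in the interior as well, we replace $w$ by $w+\varepsilon u_{\Omega}$; this uses the assumption that $\Omega$ is $k$-convex, and $u_{\Omega}$ is extended by $0$ on $\partial\Omega$. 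The new function is $k$-convex by superadditivity, it lies in $C(\bar{\Omega})$, it is strictly negative in $\Omega$ and on $\partial\Omega\setminus\{y_{0}\}$, and it equals $0$ at $y_{0}$. It is therefore a strong barrier at $y_{0}$.

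\emph{Sufficiency.} Assume $\Omega$ is strictly $k$-convex and let $g\in C(\partial\Omega)$. The plan has three steps.
\begin{enumerate}
\item Construct a function $v\in\Phi^{k}(\Omega)\cap C(\bar{\Omega})$ with $v|_{\partial\Omega}=g$.
\item Apply Lemma \ref{lem:k-convex=000020envelope} to $v$. This yields $(v|_{\partial\Omega})_{*}$, which has zero Hessian measure and boundary values $g$.
\item Conclude that $(v|_{\partial\Omega})_{*}$ is the required extension.
\end{enumerate}

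To construct $v$ in step 1, we use the barriers. For $y_{0}\in\partial\Omega$, let $b_{y_{0}}:=u_{\Omega,y_{0}}-u_{\Omega,y_{0}}(y_{0})\le0$. This function has a strict maximum $0$ at $y_{0}$. For each $\varepsilon>0$, continuity of $g$ and compactness give $\delta>0$ such that $|y-y_{0}|<\delta$ implies $|g(y)-g(y_{0})|<\varepsilon$. Compactness also gives $\eta>0$ with $b_{y_{0}}\le-\eta$ on $\bar{\Omega}\setminus B_{\delta}(y_{0})$. Define
\[
w_{y_{0},\varepsilon}:=g(y_{0})-\varepsilon+\tfrac{2\max|g|}{\eta}\,b_{y_{0}}.
\]
This is a continuous $k$-convex function with $w_{y_{0},\varepsilon}\le g$ on $\partial\Omega$ and $w_{y_{0},\varepsilon}(y_{0})=g(y_{0})-\varepsilon$. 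We then set
\[
v:=\sup\left\{ w_{y_{0},\varepsilon}\right\}
\]
over all $y_{0}$ and $\varepsilon$.

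The main obstacle is showing that $v$ is continuous on $\bar{\Omega}$. A supremum of continuous $k$-convex functions is only lower semicontinuous. I would first handle the case of a finite family. For each fixed $\varepsilon$, compactness of $\partial\Omega$ together with the uniform continuity of the $b_{y}$ near their peaks gives finitely many points $y_{1},\dots,y_{m}$ such that $v_{\varepsilon}:=\max_{i}w_{y_{i},\varepsilon}$ is continuous, $k$-convex, and satisfies $g-2\varepsilon\le v_{\varepsilon}\le g$ on $\partial\Omega$. Here the maximum of finitely many $k$-convex functions is $k$-convex, as follows directly from the definition via quadratic polynomials.

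Next, I would avoid the limit of the $v_{\varepsilon}$ altogether and work with envelopes. Apply Lemma \ref{lem:k-convex=000020envelope} to $v_{\varepsilon}$ to obtain $h_{\varepsilon}:=(v_{\varepsilon}|_{\partial\Omega})_{*}$. The comparison principle, Lemma \ref{lem:comparison=000020principle}, applied to $h_{\varepsilon}$ and $h_{\varepsilon'}\pm\|v_{\varepsilon}-v_{\varepsilon'}\|_{C(\partial\Omega)}$, gives
\[
\|h_{\varepsilon}-h_{\varepsilon'}\|_{C(\bar{\Omega})}\le\|v_{\varepsilon}-v_{\varepsilon'}\|_{C(\partial\Omega)}\le4\max(\varepsilon,\varepsilon').
\]
The family $(h_{\varepsilon})$ is therefore Cauchy in $C(\bar{\Omega})$. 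Its uniform limit $u$ is $k$-convex, has boundary values $g$, and satisfies $\mu_{k}[u]=0$ by the weak{*} continuity (\ref{eq:weak*=000020conti}). This limit is the required extension.
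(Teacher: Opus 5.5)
Your proposal is correct and follows the paper's proof. The finite maximum of rescaled barriers $w_{y_i,\varepsilon}$, chosen by a compactness covering, is exactly the paper's Stone--Dini-type Lemma~\ref{lem:Stone=000020Dini} done inline; both arguments then pass to $k$-convex envelopes, use the comparison principle to get a Cauchy sequence in $C(\bar{\Omega})$, and conclude by (\ref{eq:weak*=000020conti}). Your initial supremum $v$ over all $y_0,\varepsilon$ is never used and can be dropped, and the covering only needs continuity of each $w_{y,\varepsilon}-g$ at $y$, not any uniform continuity of the barriers. For necessity, you add $\varepsilon u_{\Omega}$ to make the maximum strict inside $\Omega$ rather than invoking the strong maximum principle as the paper does; this is a harmless, slightly more self-contained variant, though the $k$-convexity of the sum comes from the convexity of $\bar{\Gamma}_{k}$, not from superadditivity of the measures.
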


The core of its proof is the following Lemma, which resembles the
theorems of Stone--Weierstraß and Dini.
\begin{lem}
\label{lem:Stone=000020Dini}Let $K$ be a compact topological space
and $M$ be a family of continuous functions on $K$ with the following
properties:
\begin{enumerate}
\item For any $x\in K$, there exists $u\in M$ with a strict maximum in
$x$.
\item For all $u\in M$, $a\in\mathbb{R}$, and $b>0$, also $a+bu\in M$.
\item For all $u,w\in M$, also $\max(u,w)\in M$.
\end{enumerate}
Then $M$ is dense in $C(K)$.

\end{lem}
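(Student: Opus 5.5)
We prove the lemma by a Dini-type compactness argument. Fix $f\in C(K)$ and $\varepsilon>0$; we seek $u\in M$ with $\|u-f\|_{\infty}\le\varepsilon$. By property (2), $M$ contains the constants: for $u\in M$ and $b\searrow0$, $a+bu$ is uniformly close to $a$. The lattice property (3) then lets us build the approximant as a finite maximum of functions from $M$.

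\textbf{Step 1 (local peaks).} Fix $x\in K$ and let $u_x\in M$ have a strict maximum at $x$. Set
\[
w_x:=f(x)-\varepsilon/2+b\,(u_x-u_x(x))\in M .
\]
Then $w_x(x)=f(x)-\varepsilon/2$ and $w_x\le f(x)-\varepsilon/2$ everywhere.

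\textbf{Step 2 (bound from above).} By continuity of $f$ there is an open neighbourhood $V_x$ of $x$ on which $f>f(x)-\varepsilon/2$, so $w_x<f$ on $V_x$. The set $K\setminus V_x$ is compact and $u_x-u_x(x)<0$ on it, since the maximum at $x$ is strict. Hence $u_x-u_x(x)\le-\eta_x<0$ on $K\setminus V_x$. Choosing $b$ large, $b\eta_x>\|f\|_{\infty}+|f(x)|$, gives $w_x<f$ on $K\setminus V_x$ as well. Thus $w_x<f$ on all of $K$. This step is the main obstacle: it uses strictness of the maximum together with compactness. It fails if the maximum is only attained strictly locally but approached elsewhere, which compactness of $K\setminus V_x$ excludes.

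\textbf{Step 3 (bound from below).} Since $w_x(x)=f(x)-\varepsilon/2$ and both $w_x$ and $f$ are continuous, there is an open neighbourhood $U_x$ of $x$ with $w_x>f-\varepsilon$ on $U_x$. By compactness, finitely many $U_{x_1},\dots,U_{x_m}$ cover $K$. Set $u:=\max(w_{x_1},\dots,w_{x_m})$, which lies in $M$ by (3). Then $f-\varepsilon<u<f$ on $K$, so $\|u-f\|_{\infty}<\varepsilon$ and $M$ is dense in $C(K)$.

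For Lemma \ref{lem:k-convex=000020extension} one would apply this with $K=\partial\Omega$ and $M=\{v|_{\partial\Omega}: v\in\Phi^{k}(\Omega)\cap C(\bar\Omega)\}$. The strong barriers supply (1), and (2) and (3) are standard closure properties of $\Phi^{k}$. The boundary data would then be realized as a uniform limit of such traces, followed by taking $k$-convex envelopes via Lemma \ref{lem:k-convex=000020envelope}.
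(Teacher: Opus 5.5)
Your proof is correct and is essentially the paper's argument: you take rescaled peak functions from (1) and (2) that lie below $f$ (the paper only requires them to lie below $f+\varepsilon$) and come within $\varepsilon$ of $f$ near $x$, then a finite maximum over a compactness cover. Your $-\varepsilon/2$ shift makes the upper bound exact, and you spell out the uniform gap $\eta_x$ on $K\setminus V_x$ that the paper leaves implicit; the opening claim that $M$ contains the constants does not follow from (2), since $b>0$ gives only near-constants, but your proof never uses it.
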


\begin{proof}
Let $w\in C(K)$ and $\varepsilon>0$ be given. For $x\in K$, let
$u_{x}\in M$ be a function satisfying $u_{x}(x)=0>u(K\setminus\{x\})$
by (1) and (2). Let $U(x)$ be a neighborhood of $x$ where $\sup_{U}(w)-\inf_{U}(w)\leq\varepsilon$.
Choose $a_{x}>0$ sufficiently large such that 
\[
w_{x}:=a_{x}u_{x}+w(x)\leq w\quad\text{outside }U(x).
\]
Inside $U(x)$, this function is bounded from above by $w(x)\leq w+\varepsilon$,
altogether $w_{x}\leq w+\varepsilon$ in $K$. In $x$, we have the
identity $w_{x}(x)=w(x)$ and $W_{x}:=\{|w-w_{x}|<\varepsilon\}$
is a neighborhood of $x$. Due to compactness, there exist finitely
many points $x_{1},\dots,x_{m}$ such that $W_{x_{1}},\dots,W_{x_{m}}$
cover $K$. Then $w_{\varepsilon}:=\max\{w_{x_{1}},\dots,w_{x_{m}}\}\in M$
by (3) and $w-\varepsilon\leq w_{\varepsilon}\leq w+\varepsilon$.
\end{proof}
\begin{proof}[Proof of Lemma \ref{lem:k-convex=000020extension}]
\emph{If $\Omega$ is strictly $k$-convex}, let us show that $K:=\partial\Omega$
and $M$ being the restrictions of $\Phi^{k}(\Omega)\cap C(\bar{\Omega})$
to $\partial\Omega$ satisfy the assumptions of Lemma \ref{lem:Stone=000020Dini}:
(1) by definition of a strict $k$-convex domain, (2) is an obvious
property of $k$-convex functions and (3) by Lemma \ref{lem:max}.
Therefore, Lemma \ref{lem:Stone=000020Dini} implies that there exists
a sequence $u_{N}\in\Phi^{k}(\Omega)\cap C(\bar{\Omega})$ which converges
on $\partial\Omega$ to $u_{\partial}$ uniformly. The comparison
principle implies that 
\[
\left\Vert \left(u_{N}|_{\partial\Omega}\right)_{*}-\left(u_{M}|_{\partial\Omega}\right)_{*}\right\Vert _{\infty}=\left\Vert u_{N}-u_{M}\right\Vert _{C(\partial\Omega)},
\]
i.e.\ that the sequence of these $k$-convex envelopes converges
uniformly, too, and its limit is the wanted extension of $u_{\partial}$.
Corollary \ref{cor:Grenzwerts=0000E4tze} confirms that this limit
is indeed $k$-convex and (\ref{eq:weak*=000020conti}) that $\mu_{k}[u_{\partial}]=0$.

\emph{The necessity of the strict $k$-convexity} for the extendability
can be easily seen as follows: If a function on the boundary with
a strict maximum in $y\in\partial\Omega$ is extended to $\bar{\Omega}$,
the strong maximum principle \cite[Thm.~2.2, p.~15]{GilbargTrudinger2001}
implies that the maximum is also a strict maximum of the extension,
which is, thus, a strong barrier in $y$.
\end{proof}

\section{\protect\label{sec:kreisbeispiel}The Dirichlet problem with discontinuous
boundary values and infinite right-hand sides with sub- and supersolutions}

In this section, we prove Theorem \ref{thm:gedaechtniskirche} and
give examples of convex functions with discontinuous boundary values,
solvable, and an unsolvable Dirichlet problem. 

The following lemma is the key to apply the Perron method to the Hessian
measure.
\begin{lem}
\label{lem:AK=000020Massabsch=0000E4tzung}Let $1\le k\le n$ and
$-\infty<u\le w$ be $k$-convex functions on an open set $\Omega$
with $u\in C(\Omega)$. Then $\mu_{k}[u]\le\mu_{k}[w]$ on $\{u=w\}$.
\end{lem}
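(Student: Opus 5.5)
The plan is to derive the inequality from the monotonicity Lemma \ref{lem:monotonicity} in the form already used in the proof of Lemma \ref{lem:comparison=000020principle}. That form says: if $u'$ and $w'$ are $k$-convex and $\{u'>w'\}$ is open, nonempty and compactly contained, then $\mu_{k}[u'](\{u'>w'\})\le\mu_{k}[w'](\{u'>w'\})$. I will get a local estimate on small balls and then pass to the measure inequality on the contact set $K:=\{u=w\}$ by a differentiation argument.

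\emph{Step 1 (local estimate).} Fix $x_{0}\in\Omega$ and $r>0$ with $\bar B_{r}(x_{0})\subset\Omega$, and let $\varepsilon,\delta>0$. I set
\[
w_{\varepsilon}:=w+\varepsilon\left|x-x_{0}\right|^{2},\qquad u':=u+\varepsilon r^{2}-\delta .
\]
By superadditivity (\ref{eq:superadditivity}), $w_{\varepsilon}$ is $k$-convex with $\mu_{k}[w_{\varepsilon}]\ge\mu_{k}[w]$; note that $w_{\varepsilon}$ stays $k$-convex, whereas subtracting a quadratic would not. Consider the set $U_{\varepsilon,\delta}:=\{u'>w_{\varepsilon}\}\cap B_{r}(x_{0})$.
\begin{itemize}
\item On $\partial B_{r}(x_{0})$ we have $u'=u-\delta+\varepsilon r^{2}<w+\varepsilon r^{2}=w_{\varepsilon}$, using $u\le w$. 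Since $u$ is continuous and $w$ is upper semicontinuous, the strict inequality persists near the sphere, so $U_{\varepsilon,\delta}\subset\subset B_{r}(x_{0})$.
\item $U_{\varepsilon,\delta}$ is open because $u'-w_{\varepsilon}$ is lower semicontinuous. This is where the hypothesis $u\in C(\Omega)$ enters.
\end{itemize}
The monotonicity lemma then gives
\[
\mu_{k}[u](U_{\varepsilon,\delta})\le\mu_{k}[w_{\varepsilon}](U_{\varepsilon,\delta})\le\mu_{k}[w_{\varepsilon}](\bar B_{r}(x_{0})).
\]
As $\delta\searrow0$, the sets $U_{\varepsilon,\delta}$ increase to $\{u+\varepsilon(r^{2}-|x-x_{0}|^{2})>w\}\cap B_{r}(x_{0})$. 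This set contains $K\cap B_{r}(x_{0})$, because on $K$ we have $u=w$ and $r^{2}-|x-x_{0}|^{2}>0$ inside the ball. Hence $\mu_{k}[u](K\cap B_{r}(x_{0}))\le\mu_{k}[w_{\varepsilon}](\bar B_{r}(x_{0}))$.

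Next let $\varepsilon\to0$. Then $w_{\varepsilon}\to w$ locally uniformly, so by the weak{*} continuity (\ref{eq:weak*=000020conti}) and upper semicontinuity of weak{*} limits on compact sets,
\[
\mu_{k}[u](K\cap B_{r}(x_{0}))\le\mu_{k}[w](\bar B_{r}(x_{0})).
\]
Finally, taking open radii $r'\searrow r$ upgrades the left side to the closed ball: $\mu_{k}[u](K\cap\bar B_{r}(x_{0}))\le\mu_{k}[w](\bar B_{r}(x_{0}))$. This holds for all $x_{0}\in\Omega$ and all sufficiently small $r$.

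\emph{Step 2 (from balls to the measure inequality).} Let $\lambda:=\mu_{k}[u]\mres K$; both $\lambda$ and $\mu_{k}[w]$ are locally finite Radon measures on $\Omega$. Step 1 says $\lambda(\bar B_{r}(x))\le\mu_{k}[w](\bar B_{r}(x))$ for every centre $x$ and every small $r$.
\begin{itemize}
\item The Besicovitch differentiation theorem gives the decomposition $\lambda=D\,\mu_{k}[w]+\lambda^{s}$ with density $D(x)=\lim_{r\to0}\lambda(\bar B_{r}(x))/\mu_{k}[w](\bar B_{r}(x))$. By Step 1, $D\le1$ $\mu_{k}[w]$-a.e.
\item The singular part $\lambda^{s}$ is concentrated on the set where this ratio tends to $+\infty$. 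Step 1 makes that set empty, since the ratio is at most $1$ whenever the denominator is positive, and the numerator vanishes when it is zero. Hence $\lambda^{s}=0$.
\end{itemize}
Therefore $\mu_{k}[u]\mres K\le\mu_{k}[w]$, i.e.\ $\mu_{k}[u]\le\mu_{k}[w]$ on $\{u=w\}$.

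The main obstacle is Step 1: the localization has to be arranged so that $\{u'>w'\}$ is compactly contained while still covering the contact set. The naive choice $u+\delta>w$ fails because $u=w$ may hold up to $\partial\Omega$. Perturbing $w$ upward by a convex quadratic, rather than $u$ downward by a concave one, solves this while preserving $k$-convexity. The price is a loss of mass on $\bar B_{r}\setminus K$, and the differentiation argument of Step 2 is exactly what recovers the sharp inequality restricted to $K$.
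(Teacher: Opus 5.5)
\textbf{Gap: the argument only covers $k>n/2$.} The statement is for all $1\le k\le n$. For $k>n/2$ every $k$-convex function is real-valued and continuous, so the hypotheses $-\infty<u$ and $u\in C(\Omega)$ only have content when $k\le n/2$, and that is exactly the range your argument does not reach.

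The step that fails is the appeal to Lemma \ref{lem:monotonicity} in Step 1. As available here, that lemma requires $k>n/2$ and two functions in $\Phi^{k}(U)\cap C(\bar U)$ that coincide on $\partial U$. Your reformulation (``if $\{u'>w'\}$ is open, nonempty and compactly contained, then $\mu_{k}[u'](\{u'>w'\})\le\mu_{k}[w'](\{u'>w'\})$'') follows from it only when both functions are continuous.

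For $k\le n/2$, $w$ is merely upper semicontinuous. At $y\in\partial U_{\varepsilon,\delta}$ you only get $\limsup_{U_{\varepsilon,\delta}\ni z\to y}w_{\varepsilon}(z)\le u'(y)\le w_{\varepsilon}(y)$. So $w_{\varepsilon}$ may jump up on $\partial U_{\varepsilon,\delta}$, and the cited lemma does not apply. You would need a Bedford--Taylor type mass comparison for bounded, possibly discontinuous $k$-convex functions, which is neither cited nor proved.

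There is also a smaller slip in Step 1. The function $u'-w_{\varepsilon}$ is lower, not upper, semicontinuous, so strict negativity on the sphere does not by itself persist nearby. Compact containment follows instead from $u\le w$: it gives $u'-w_{\varepsilon}\le\varepsilon(r^{2}-|x-x_{0}|^{2})-\delta<0$ whenever $|x-x_{0}|^{2}>r^{2}-\delta/\varepsilon$.

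\textbf{For $k>n/2$ the proof is correct but heavier than needed.} Step 1, together with the Besicovitch argument in Step 2, does give $\mu_{k}[u]\mres\{u=w\}\le\mu_{k}[w]$. The paper avoids both the monotonicity lemma and the differentiation theorem, as follows.
\begin{enumerate}
\item Since $u-w$ is lower semicontinuous (this is where $u\in C(\Omega)$ is used), $\{u>w-\varepsilon\}$ is an open neighbourhood of $\{u=w\}$.
\item On that neighbourhood $w_{\varepsilon}:=\max(u,w-\varepsilon)$ coincides with $u$. By locality, $\mu_{k}[w_{\varepsilon}](K)=\mu_{k}[u](K)$ for every compact $K\subset\{u=w\}$.
\item As $\varepsilon\to0$, $w_{\varepsilon}\to w$ pointwise. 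Weak$^{*}$ continuity of $\mu_{k}$, which holds for every $k$, gives $\mu_{k}[w](K)\ge\limsup_{\varepsilon\to0}\mu_{k}[w_{\varepsilon}](K)=\mu_{k}[u](K)$.
\item Inner regularity then yields the inequality on $\{u=w\}$.
\end{enumerate}
Replacing your Step 1 with this cutoff closes the gap for all $k$ at once and makes Step 2 unnecessary.
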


\begin{proof}
Since $w-u$ is upper semi-continuous (by definition of a $k$-convex
function and because $u\in C(\Omega)$), $\{u>w{-}\varepsilon\}$
is a neighborhood of $\{u=w\}$ for $\varepsilon>0$. Therefore, $w_{\varepsilon}:=\max(u,w{-}\varepsilon)$
satisfies $w_{\varepsilon}=u$ in a neighborhood of $\{u=w\}$ and
(since $\mu_{k}$ is local) $\mu_{k}[w_{\varepsilon}]=\mu_{k}[u]$
in $\{u=w\}$. According to the weak{*} continuity of $\mu_{k}$ (\ref{eq:weak*=000020conti})
and Lemma \ref{lem:weak*=000020convergence}, a compact set $K\subset\{u=w\}$
satisfies
\[
\mu_{k}[w](K)=\left(\lim_{\varepsilon\to0}\mu_{k}[w_{\varepsilon}]\right)(K)\ge\limsup_{\varepsilon\to0}\left(\mu_{k}[w_{\varepsilon}](K)\right)=\mu_{k}[u](K).
\]
Since $\mu_{k}$ is inner regular (i.e., $\mu_{k}[u](A)=\sup\left\{ \mu_{k}[u](K)~\middle|~K\subset\subset A\right\} $
for all Borel sets $A$), this implies $\mu_{k}[w]\ge\mu_{k}[u]$
on $\{u=w\}$.
\end{proof}
\begin{cor}
\label{cor:max4Perron}For an open set $\Omega$, $1\le k\le n$ and
$u,w\in\Phi^{k}(\Omega)\cap C(\Omega)$, it holds that
\[
\mu_{k}[\max(u,w)]\ge\min\left(\mu_{k}[u],\mu_{k}[w]\right).
\]
\end{cor}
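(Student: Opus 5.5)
The plan is to split $\Omega$ into the three Borel sets $\{u>w\}$, $\{u<w\}$ and $\{u=w\}$, and to prove the inequality $\mu_{k}[\max(u,w)]\ge\min(\mu_{k}[u],\mu_{k}[w])$ on each piece separately. Here the minimum of two measures means the largest measure below both, e.g.\ $\min(\mu,\nu)(A)=\inf\{\mu(B)+\nu(A\setminus B)\}$. Since $u,w$ are continuous, $v:=\max(u,w)$ lies in $\Phi^{k}(\Omega)\cap C(\Omega)$ (Lemma \ref{lem:max}), and its Hessian measure is locally finite.

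On the open set $\{u>w\}$ we have $v=u$. Because $\mu_{k}$ is local, $\mu_{k}[v]=\mu_{k}[u]\ge\min(\mu_{k}[u],\mu_{k}[w])$ there. Symmetrically, on the open set $\{u<w\}$ we get $\mu_{k}[v]=\mu_{k}[w]$. Locality can be justified directly from the weak$^{*}$ continuity (\ref{eq:weak*=000020conti}): restricted to an open subset $U$, both functions are $k$-convex on $U$ and coincide there, so the measures coincide on $U$.

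On the closed set $E:=\{u=w\}$ we use Lemma \ref{lem:AK=000020Massabsch=0000E4tzung}. We apply it with the pair $u\le v$; both are $k$-convex, $u$ is continuous and finite, and $\{u=v\}=\{u\ge w\}\supset E$. This gives $\mu_{k}[v]\ge\mu_{k}[u]$ on $\{u\ge w\}$, in particular on $E$. Hence $\mu_{k}[v]|_{E}\ge\mu_{k}[u]|_{E}\ge\min(\mu_{k}[u],\mu_{k}[w])|_{E}$. Applying the lemma instead to $w\le v$ gives the analogous bound with $w$, but one of the two already suffices.

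Finally we assemble. For a Borel set $A$, split $A=(A\cap\{u>w\})\cup(A\cap E)\cup(A\cap\{u<w\})$. Choosing $B=A\cap\{u\ge w\}$ in the definition of the minimum yields
\[
\mu_{k}[v](A)\ge\mu_{k}[u](A\cap\{u\ge w\})+\mu_{k}[w](A\cap\{u<w\})\ge\min(\mu_{k}[u],\mu_{k}[w])(A).
\]
The only genuinely delicate point is the contact set $E$, where neither function dominates in a neighbourhood. That case is exactly what Lemma \ref{lem:AK=000020Massabsch=0000E4tzung} handles, so I expect no real obstacle beyond checking its hypotheses, which hold because $u,w\in C(\Omega)$ make $u$ finite and continuous.
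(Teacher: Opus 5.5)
Your proof is correct and is essentially the paper's argument: locality of $\mu_{k}$ on the open set $\{u<w\}$, and Lemma \ref{lem:AK=000020Massabsch=0000E4tzung} applied to $u\le\max(u,w)$ on $\{u\ge w\}=\{u=\max(u,w)\}$. Your separate locality step on $\{u>w\}$ is redundant, since the lemma already covers all of $\{u\ge w\}$; your final assembly in fact only uses it that way.
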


\begin{proof}
Since $\{u<w\}$ is an open set and $\mu_{k}$ is local, $\mu_{k}[\max(u,w)]=\mu_{k}[w]$
there. On $\{u\ge w\}$, the statement holds by Lemma \ref{lem:AK=000020Massabsch=0000E4tzung}.
\end{proof}
This Corollary is the sinews of the Perron method to prove Theorem
\ref{thm:gedaechtniskirche}(\ref{enu:Zwischenwertsatz}).
\begin{proof}[Proof of Theorem \ref{thm:gedaechtniskirche}]
\emph{The uniqueness (\ref{enu:uniqueness}) follows from the comparison
principle (\ref{eq:comparison=000020principle}) and the statement
(\ref{enu:Klassiker}) is just a reformulation of Theorem \ref{thm:Bern}.}

\emph{Proof of (\ref{enu:Zwischenwertsatz}).} Let $u_{1},u_{3}\in\Phi^{k}(\Omega)$
be functions with $u_{i}-u_{\partial}\in C_{0}(\Omega)$ and $\mu_{k}[u_{i}]=\nu_{i}$
for $i=1,3$. We will show that the Perron solution
\[
u_{2}:=\sup W\quad\text{for }W:=\left\{ w\in\Phi^{k}(\Omega)~\middle|~\mu_{k}[w]\ge\nu_{2},u_{3}\le w\le u_{1}\right\} 
\]
solves $\mu_{k}[u_{2}]=\nu_{2}.$ Since $u_{3}\in W$, $W$ is not
empty. Since $W$ is locally uniformly bounded, $u_{2}\in\Phi^{k}(\Omega)$
by Corollary \ref{cor:Grenzwerts=0000E4tze}. Let $\{\tilde{w}_{1},\dots\}$
be a countable subset of $W$ which is locally dense with respect
to $\left\Vert \cdot\right\Vert _{\infty}$ and $w_{N}:=\max(\tilde{w}_{1},\dots,\tilde{w}_{N})$,
which belongs to $W$ by Lemma \ref{lem:max} and Corollary \ref{cor:max4Perron}.
The sequence $w_{N}$ converges to $u_{2}$ pointwise. The weak{*}
continuity (\ref{eq:weak*=000020conti}) shows that the limit $u_{2}=\lim_{N\to\infty}w_{N}$
satisfies $\mu_{k}[u_{2}]\ge\nu_{2}$, too. Now replace $u_{2}$ on
a ball $B\subset\subset\Omega$ by the solution of 
\[
\begin{cases}
\mu_{k}[\tilde{u}_{2}]=\nu_{2} & \text{in }B\\
\tilde{u}_{2}=u_{2} & \text{outside }B
\end{cases}
\]
yielded by Theorem \ref{thm:Bern}. While on the one hand, the comparison
principle shows that $\tilde{u}_{2}\ge u_{2}$, on the other hand,
the definition of $k$-convex functions confirms $\tilde{u}_{2}\in\Phi^{k}(\Omega)$,
Lemma \ref{lem:AK=000020Massabsch=0000E4tzung} shows that $\mu_{k}[\tilde{u}_{2}]\ge\mu_{k}[u_{2}]\ge\nu_{2}$
outside $B$, and (again using the comparison principle to show $\tilde{u}_{2}\le u_{1}$)
that $\tilde{u}_{2}\in W$. But $\tilde{u}_{2}\in W$ implies that
$\tilde{u}_{2}\le u_{2}$ and $\tilde{u}_{2}$ already equals $u_{2}$,
i.e.\ $\mu_{k}[u_{2}]=\nu_{2}$, because $B$ was chosen arbitrarily.
The bounds $u_{3}\le u_{2}\le u_{1}$ guarantee that $u_{2}-u_{\partial}\in C_{0}(\Omega)$. 

\emph{Proof of (\ref{enu:Addition}).} Let $u_{3}:=u_{1}+u_{2}$,
and $w_{i}\in\Phi^{k}(\Omega)$ with $\mu[w_{i}]=\nu_{i}$, $w_{i}-u_{i}\in C_{0}(\Omega)$
for $i=1,2,3$, such that $\nu_{3}\le\nu_{1}+\nu_{2}\le\mu_{k}[w_{1}+w_{2}]$
by assumption and the superadditivity (\ref{eq:superadditivity})
and $w_{3}\ge w_{1}+w_{2}$ by the comparison principle (\ref{eq:comparison=000020principle}).
Then $(\nu_{1}+\nu_{2},u_{1}+u_{2})\in\RHS$ follows from (\ref{enu:Zwischenwertsatz}).
\end{proof}
\begin{example}
Theorem \ref{thm:gedaechtniskirche}(\ref{enu:Addition}) implies
by dint of Lemma \ref{lem:k-convex=000020envelope} that the Dirichlet
problem with continuous nonvanishing boundary values $u_{\partial}\in C(\bar{\Omega})\cap\Phi^{k}(\Omega)$
is solvable if the Dirichlet problem with vanishing boundary values
is and even more: \emph{Let $k>n/2$, $\Omega\subset\mathbb{R}^{n}$
be open, bounded and $k$-convex. Let $u_{\nu},u_{\partial}\in\Phi^{k}(\Omega)\cap C(\bar{\Omega})$
and $\nu\le\mu_{k}[u_{\nu}]$. Then the Dirichlet problem
\begin{equation}
\begin{cases}
\mu_{k}[u]=\nu & \text{in }\Omega,\\
u=u_{\nu}+u_{\partial} & \text{on }\partial\Omega
\end{cases}\label{eq:Randinhomogenisierung}
\end{equation}
has a unique solution.}
\end{example}

On strictly convex domains, the combination of Theorem \ref{thm:gedaechtniskirche}(\ref{enu:Addition})
and Lemma \ref{lem:k-convex=000020extension} imply that the solvability
of a Dirichlet problem is independent of the boundary values:
\begin{cor}
\label{cor:Independence=000020of=000020solutions}Let $k>n/2$, $\Omega\subset\mathbb{R}^{n}$
be open, bounded and strictly $k$-convex. Then $\RHS$ (see Theorem
\ref{thm:gedaechtniskirche}) has the property
\[
(\nu,g_{1})\in\RHS\quad\text{if and only if}\quad(\nu,g_{2})\in\RHS
\]
for any $g_{1},g_{2}\in C(\bar{\Omega})$ and locally finite Borel
measure $\nu$. 
\end{cor}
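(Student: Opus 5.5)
By symmetry it suffices to show that $(\nu,g_{1})\in\RHS$ implies $(\nu,g_{2})\in\RHS$. The plan is to use Theorem \ref{thm:gedaechtniskirche}(\ref{enu:Addition}) with the pair $(\nu,g_{1})$ and a harmonic-type correction $(0,h)$, where $h$ accounts for the change of boundary values.

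\emph{Step 1: the correction term.} Since $g_{2}-g_{1}$ is continuous on $\partial\Omega$ and $\Omega$ is strictly $k$-convex, Lemma \ref{lem:k-convex=000020extension} yields an extension $h\in\Phi^{k}(\Omega)\cap C(\bar{\Omega})$ with $h=g_{2}-g_{1}$ on $\partial\Omega$ and $\mu_{k}[h]=0$. In particular $(0,h)\in\RHS$, with $h$ itself as the solution.

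\emph{Step 2: the comparison datum required by (\ref{enu:Addition}).} We need some $(\nu_{3},g_{1}+h)\in\RHS$ with $\nu_{3}\le\nu+0$. Note that $g_{1}+h$ is merely continuous on $\bar{\Omega}$ and need not be $k$-convex. Applying Lemma \ref{lem:k-convex=000020extension} once more, we take $w\in\Phi^{k}(\Omega)\cap C(\bar{\Omega})$ with $\mu_{k}[w]=0$ and $w=g_{1}+h$ on $\partial\Omega$. Then $w-(g_{1}+h)\in C_{0}(\Omega)$, because both functions are continuous on $\bar{\Omega}$ and agree on $\partial\Omega$ ($\Omega$ is bounded). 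Hence $(0,g_{1}+h)\in\RHS$, and $\nu_{3}:=0\le\nu$ holds because $\nu$ is nonnegative.

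\emph{Step 3: conclusion.} Theorem \ref{thm:gedaechtniskirche}(\ref{enu:Addition}) now gives $(\nu,g_{1}+h)\in\RHS$. Set $u$ to be its solution, so $u-(g_{1}+h)\in C_{0}(\Omega)$. Since $g_{1}+h$ and $g_{2}$ are continuous on $\bar{\Omega}$ and coincide on $\partial\Omega$, their difference lies in $C_{0}(\Omega)$. Therefore $u-g_{2}\in C_{0}(\Omega)$, which gives $(\nu,g_{2})\in\RHS$.

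The main point to check is that the membership in $\RHS$ depends only on the boundary trace. Concretely, for functions in $C(\bar{\Omega})$ agreeing on $\partial\Omega$ the difference lies in $C_{0}(\Omega)$, and this uses boundedness of $\Omega$ so that the sets $\{|f|\ge\varepsilon\}$ are compact in $\Omega$. The other point is that (\ref{enu:Addition}) allows boundary data $u_{1},u_{2}$ that are only continuous, not $k$-convex; its definition of $\RHS$ only requires $u_{\partial}\in C(\Omega)$, so this is fine.
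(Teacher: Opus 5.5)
Your proposal is correct and follows the route the paper indicates just before the corollary. The paper combines Theorem \ref{thm:gedaechtniskirche}(\ref{enu:Addition}) with Lemma \ref{lem:k-convex=000020extension}, exactly as you do: you extend $g_{2}-g_{1}$ to $h$ with $\mu_{k}[h]=0$, obtain the comparison datum $(0,g_{1}+h)\in\RHS$ from the same lemma, apply (\ref{enu:Addition}), and then use that differences of $C(\bar{\Omega})$ functions vanishing on $\partial\Omega$ lie in $C_{0}(\Omega)$ for bounded $\Omega$.
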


Finally, let us add some examples of functions with discontinuous
boundary data in order to show that the statements about discontinuous
boundary data in Theorem \ref{thm:gedaechtniskirche} are not redundantly
general.
\begin{example}
\label{exa:kreisbeispiel}Let us consider the circles $\Omega_{r}:=B_{r}((0,r))$
with radius $r$ touching the $y$-axis from the right in 0 and as
a first example the convex envelope $u_{1}$ of $u_{1}(0)=0,u_{1}|_{\partial\Omega_{r}\setminus\{0\}}=r$
for arbitrary $r>0$. We extend the function to
\begin{align*}
u_{1} & :=\left\{ \begin{array}{ll}
\frac{1}{2}\left(\frac{x_{2}^{2}}{x_{1}}+x_{1}\right) & \text{for }x\in\left]0,\infty\right[\times\mathbb{R}\\
0 & \text{for }x=0
\end{array}\right\} \in\Phi^{n}(\left]0,\infty\right[\times\mathbb{R})\setminus C(\partial\Omega_{r}).
\end{align*}
Restricted to $\bar{\Omega}_{r}$, $u_{1}$ is the convex hull of
the boundary data (note that it is affine on rays starting from 0),
thus its Monge--Ampère measure vanishes. Theorem (\ref{thm:gedaechtniskirche})
implies that the Dirichlet problem 
\[
\begin{cases}
\mu_{n}[u]=\nu\\
u-u_{1}\in C_{0}(\Omega)
\end{cases}
\]
is uniquely solvable for any finite Borel measure $\nu$.

Secondly, let
\[
u_{2}:=\begin{cases}
1 & \text{in }\bar{\Omega}_{1}\\
u_{1} & \text{in }\Omega_{2}\setminus\bar{\Omega}_{1}.
\end{cases}
\]
We notice that the convex function $u_{2}$ coincides with $u_{1}/2+1$
on the boundary of $\Omega_{2}$, but $u_{1}/2+1-u_{2}\notin C_{0}(\Omega)$,
because on the dotted circle $\gamma:=\partial\Omega_{1}\setminus\{0\}$,
$u_{2}=1\neq3/2=u_{1}/2+1$ and $\partial\Omega_{1}$ touches $\partial\Omega_{2}$.
Additionally, we want to show that there does not exist a convex function
$u$ with the following properties:
\begin{equation}
\begin{cases}
\mu_{2}[u]\ge\mu_{2}[u_{2}] & \text{in }\Omega_{2},\\
u(x)\to0 & \text{for }x\to x_{0}\in\partial\Omega_{2}\setminus\{0\},\\
\liminf_{}u(x)>-1/2 & \text{for }\gamma\ni x\to0,\\
u\in C^{2}(\bar{\Omega}_{1}\setminus\{0\})\cap C^{2}(\Omega_{2}\setminus\Omega_{1}).
\end{cases}\label{eq:unsolvable_lang}
\end{equation}
In particular, the Dirichlet problem 
\begin{equation}
\{\mu_{2}[u]=\mu_{2}[u_{2}],u\in C_{0}(\Omega)\}\label{eq:unsolvable}
\end{equation}
has no convex solution $u\in C^{2}(\bar{\Omega}_{1}\setminus\{0\})\cap C^{2}(\Omega_{2}\setminus\Omega_{1})$,
which is remarkable compared to Corollary \ref{cor:Independence=000020of=000020solutions}.
We think that the last condition of (\ref{eq:unsolvable_lang}) follows
from the smoothness of the other conditions, but this shall not occupy
us here. To prove the nonexistence, assume that $u$ would be a convex
function satisfying (\ref{eq:unsolvable_lang}). On $\gamma$, we
decompose the inner and the outer gradient of $u$ by 
\begin{align*}
\nabla_{i}u(x) & :=\nabla(u|_{\bar{\Omega}_{1}\setminus\{0\}})(x)=\alpha(x)\tau(x)+\beta_{i}(x)\nu(x)\\
\nabla_{o}u(x) & :=\nabla(u|_{\Omega_{2}\setminus\Omega_{1}})(x)=\alpha(x)\tau(x)+\beta_{o}(x)\nu(x),
\end{align*}
where $\tau$ is the unit tangent vector to $\partial\Omega_{1}$,
$\nu$ its outer normal and $\alpha=\partial_{\tau}u$ and $\beta_{i},\beta_{o}$
are real-valued functions. Then on $\gamma$, the Monge--Ampère measure
of $u$ is given by the volume of the union of the line segments $[\beta_{o}(x),\beta_{i}(x)]\nu(x)+\alpha(x)\tau(x)$,
i.e.
\[
\mu_{2}[u](A)=\int_{A}(\beta_{o}-\beta_{i})\partial_{\tau}\alpha+\frac{1}{2}\left(\beta_{o}^{2}-\beta_{i}^{2}\right)ds
\]
for an arc $A\subset\gamma$. This yields
\[
\mu_{2}[u_{2}]=\frac{1}{2}\beta_{o}^{2}ds=\frac{1}{2}\left|\nabla u_{1}\right|^{2}ds=\frac{ds}{2x_{1}^{2}}\quad\text{on }\partial\Omega_{1}
\]
(and elsewhere, $\mu_{2}[u_{2}]$ vanishes). Now we parametrize $\gamma$
by arc length, starting at $0$, and abbreviate $u(s):=u(1{-}\cos s,\allowbreak\sin s)$
and $\alpha(s)$ and $\beta(s)$ likewise. It holds that $\alpha(s)=u'(s)$
and $\partial_{\tau}\alpha(s)=u''(s)$. This yields the ordinary differential
inequality
\begin{equation}
\frac{1}{2x_{1}^{2}}=\frac{d\mu_{2}[u_{2}]}{ds}\le\frac{d\mu_{2}[u]}{ds}=(\beta_{o}-\beta_{i})u''+\frac{1}{2}\left(\beta_{o}^{2}-\beta_{i}^{2}\right)\le(\beta_{o}-\beta_{i})u''+\frac{1}{2}\beta_{o}^{2}.\label{eq:example=000020ode}
\end{equation}
Since $u$ is convex, $u(x)\le0$, $\beta_{o}\ge\beta_{i}$ and $\nabla_{o}u(x)$
and $\nabla_{i}u(x)$ have to satisfy $u\ge u(x)+\nabla_{*}u(x)$
for $*=i,o$. Applied to the points $y_{*}=x+\lambda_{*}\nu(x)$ on
$\partial\Omega_{2}$ with $\lambda_{i}\leq0\le\lambda_{o}$, this
means that $\lambda_{*}\beta_{*}(x)\le-u(x)$. It is easy to show
that the distances $\lambda_{o}=|y_{o}-x|\in x_{1}/2+o(x_{1})$ and
$\lambda_{i}=-\left|y_{i}-x\right|\in-O(1)$ for $\partial\Omega_{1}\ni x\to0$
and therefore $\beta_{o}\le-2u/x_{1}(1+o(1))$, $\beta_{i}\gtrsim u$
near $0$ (where $f\gtrsim g$ means that there exists a constant
$c>0$ such that $f\ge c$ for all $s$). Plugged into (\ref{eq:example=000020ode}),
this yields
\begin{align*}
u''(s) & \ge\frac{1}{2}\frac{x_{1}^{-2}-\beta_{o}^{2}}{\beta_{o}-\beta_{i}}\gtrsim\frac{1-4u^{2}}{-x_{1}u}.
\end{align*}
On the dotted circle $\gamma$ it holds that $x_{1}(s)\le s^{2}$
near $0$. If now $u(s)\ge c$ for a constant $c>-1/2$ and all sufficiently
small $s$, then 
\[
u''(s)\gtrsim1/x_{1}(s)\ge s^{-2}
\]
which has no bounded solution.
\end{example}

\section{\protect\label{sec:Open-questions}Open questions}

Our main results are open for $k\leq n/2$.

Is Lemma \ref{lem:k-convex=000020envelope} extendable to lower-semicontinuous
discontinuous boundary values? Let us elaborate the question: For
simplicity, let $\Omega$ be a bounded convex domain and $u_{\partial}\in\Phi^{k}(\Omega)$
be bounded but discontinuous on $\bar{\Omega}$. Let us denote the
lower semi-continuous envelope of $u$ by $\underline{u}$. Does the
Perron solution
\[
u:=\sup\left\{ w\in\Phi^{k}(\Omega)~\middle|~\underline{w}\le\underline{u}_{\partial}\right\} 
\]
(here lower and not upper semi-continuous envelopes!) solve the associated
Dirichlet problem
\[
\begin{cases}
\mu_{k}[u]=0 & \text{in }\Omega\\
\underline{w}=\underline{u}_{\partial} & \text{on }\partial\Omega?
\end{cases}
\]
This is easily seen for convex functions, where the convex envelope
of the boundary data is well-defined.

Does Corollary \ref{cor:Independence=000020of=000020solutions} (``The
solvability is independent of the boundary values.'') hold also for
nonstrict $k$-convex domains (provided the boundary values stem from
a $k$-convex function) or bounded discontinuous boundary values?
This question seems to be undiscussed even for the Monge--Ampère
equation. In the case of discontinuous boundary values, of course,
the answer depends on the definition, how the boundary conditions
have to be satisfied. Let us discuss Example \ref{exa:kreisbeispiel}
a little more from this point of view. Our boundary condition $u{-}u_{\partial}\in C_{0}(\Omega)$
(\ref{eq:DP=000020discontinuous}) was fitted for the comparison principle,
but (\ref{eq:unsolvable}) shows an \emph{unsolvable} Dirichlet problem
(probably: if the differentiability condition can be dispensed with)
with a measure coming from a \emph{solvable} Dirichlet problem with
a bounded right-hand side. So we may ask:
\begin{problem}
Is there a weaker notion of boundary conditions instead of $u{-}u_{\partial}\in C_{0}(\Omega)$
such that every Dirichlet problem of a wider class than $\RHS$ from
Theorem \ref{thm:gedaechtniskirche} has a unique solution and the
solvability is independent of the boundary values, provided they are
bounded?
\end{problem}

One can prescribe a lower semi-continuous function $g$ on the boundary
and demand $\lim\inf_{\Omega\ni x\to y\in\partial\Omega}u(x)=g(y)$
(in that sense $u_{2}$ coincides on the boundary with $u_{1}/2+1$,
so this enlarges the class of solvable problems probably), but (\ref{eq:unsolvable})
stays unsolvable in that sense. Another notion of boundary condition
could demand even less, e.g.\ that $u(x){-}u_{\partial}(x)\to0$
for any $x\to x_{0}$ only for \emph{almost every }$x_{0}\in\partial\Omega$.
Assume that the Dirichlet problem $\{\mu_{2}[u]=\mu_{2}[u_{2}],u=0\text{ on }\partial\Omega_{2}\}$
would have a solution in such a weak sense. Would not $0$ and $u_{1}$
both be weak solutions to the Dirichlet problem $\{\mu_{2}[w]=0,w=0\text{ on }\partial\Omega_{2}\}$
then, violating the uniqueness of solutions?

\appendix

\section{\protect\label{sec:Theorems-from-others}Cited results}
\begin{defn}[{Principal curvature, from \cite[Sec.~14.6]{GilbargTrudinger2001}}]
\label{def:principals}%
Let $\Omega$ be a $C^{2}$ domain. For $y\in\partial\Omega$, let
$\nu(y)$ and $T(y)$ denote the \emph{inner} normal and the tangent
hyperplane to $\partial\Omega$ at $y$. By a rotation of coordinates,
we can assume that the $x_{n}$ coordinate axis lies in the direction
of $\nu(y)$. In some neighborhood $\mathcal{N}$ of $y_{}$, $\partial\Omega$
is given by $x_{n}=\phi(x')$ where $x'=(x_{1},\dots,x_{n-1})$, $\phi\in C^{2}(T(y)\cap\mathcal{N})$
and $D\phi(y_{})=0$. The eigenvalues $\kappa_{1},\dots,\kappa_{n-1}$
of the Hessian $D^{2}(\phi(y'_{}))$ are called the \emph{principal
curvatures }of $\partial\Omega$ at $y$.
\end{defn}

\begin{lem}[{Definition of weak{*} convergence of measures, cf. \cite[Thm.~1.40, Sec.~1.9, p.~65]{Evans_measure}}]
\label{lem:weak*=000020convergence}The following statements are
equivalent definitions of weak{*}-convergence of locally finite Borel
measures $\mu_{j}$ on $\Omega$:
\begin{enumerate}
\item $\int_{\Omega}\phi\,\d\mu_{j}\to\int_{\Omega}\phi\,\d\mu$ for any
compactly bounded $\phi\in C_{c}(\Omega)$.
\item $\limsup_{j\to\infty}\mu_{j}(K)\le\mu(K)$ for any $K\subset\Omega$
and $\liminf_{j\to\infty}\mu_{j}(U)\ge\mu(U)$ for any $U\subset\Omega$.
\end{enumerate}
\end{lem}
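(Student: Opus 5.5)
The plan is to prove the chain (1)$\Rightarrow$(2)$\Rightarrow$(1) directly, and to read the statement as it is intended: $K$ compact and $U$ open in (2), and test functions in $C_c(\Omega)$ in (1). The measures are locally finite Borel measures on the open set $\Omega\subset\mathbb{R}^n$, which is locally compact and $\sigma$-compact, so they are Radon measures. That means compact sets can be approximated from outside by open sets and open sets from inside by compact sets. This regularity is the only structural input I will use, and I would quote it from \cite{Evans_measure}.

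\emph{(1)$\Rightarrow$(2).} Fix a compact $K\subset\Omega$ and $\varepsilon>0$. By outer regularity there is an open $V$ with $K\subset V\subset\subset\Omega$ and $\mu(V)\le\mu(K)+\varepsilon$. Urysohn's lemma gives $\phi\in C_c(V)$ with $0\le\phi\le1$ and $\phi=1$ on $K$. Then
\[
\limsup_{j}\mu_j(K)\le\lim_{j}\int\phi\,\d\mu_j=\int\phi\,\d\mu\le\mu(V)\le\mu(K)+\varepsilon.
\]
For an open $U\subset\Omega$, take any compact $L\subset U$ and $\phi\in C_c(U)$ with $0\le\phi\le1$ and $\phi=1$ on $L$. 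This gives $\liminf_j\mu_j(U)\ge\int\phi\,\d\mu\ge\mu(L)$, and inner regularity of $\mu$ on $U$ finishes the estimate.

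\emph{(2)$\Rightarrow$(1).} Let $\phi\in C_c(\Omega)$. Splitting $\phi=\phi^+-\phi^-$, it suffices to treat $\phi\ge0$. I would use the layer-cake formula
\[
\int\phi\,\d\mu_j=\int_0^{\|\phi\|_\infty}\mu_j(\{\phi>t\})\,\d t.
\]
For $t>0$, the set $\{\phi>t\}$ is open and $\{\phi\ge t\}$ is compact inside $\supp\phi$. By (2),
\[
\liminf_j\mu_j(\{\phi>t\})\ge\mu(\{\phi>t\})\quad\text{and}\quad\limsup_j\mu_j(\{\phi\ge t\})\le\mu(\{\phi\ge t\}).
\]
The sets $\{\phi=t\}$ have positive $\mu$-measure for at most countably many $t$, because $\mu(\supp\phi)<\infty$. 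Hence for a.e.\ $t$ we get $\mu_j(\{\phi>t\})\to\mu(\{\phi>t\})$.

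To pass the limit under the $t$-integral I need the uniform bound $\sup_j\mu_j(\supp\phi)<\infty$. This follows from the $\limsup$ part of (2) applied to the compact set $\supp\phi$, which gives a bound for all large $j$. Dominated convergence then gives $\int\phi\,\d\mu_j\to\int\phi\,\d\mu$.

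The main obstacle is exactly this measure-theoretic step: the countability of the atoms of the pushforward $\phi_\#\mu$ and the uniform local bound. Both are standard, but they must be stated carefully, since the paper's formulation of (2) leaves ``$K$ compact, $U$ open'' implicit.
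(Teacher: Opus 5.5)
Your argument is correct. The paper gives no proof of its own: it defers to \cite[Thm.~1.40]{Evans_measure}, and it only uses the direction (1)$\Rightarrow$(2), for compact sets, in the proof of Lemma~\ref{lem:AK=000020Massabsch=0000E4tzung}.

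Your (1)$\Rightarrow$(2) is the textbook argument essentially verbatim: an Urysohn cutoff combined with outer regularity on compact sets and inner regularity on open sets.

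For (2)$\Rightarrow$(1) you take a different and shorter route. Evans--Gariepy first derive a third equivalent condition, $\mu_j(B)\to\mu(B)$ for bounded Borel $B$ with $\mu(\partial B)=0$. They then approximate $\phi\ge0$ by step functions on the slabs $\phi^{-1}(]t_{i-1},t_i])$, choosing the levels $t_i$ among the $\mu$-null level sets and confining everything to a ball $B(R)$ with $\mu(\partial B(R))=0$. You use the same key fact, namely that only countably many level sets of $\phi$ carry $\mu$-mass. The difference is that the layer-cake formula plus dominated convergence replaces the Riemann-sum bookkeeping.

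Your version also buys something concrete. You only ever use the sets $\{\phi>t\}$ and $\{\phi\ge t\}$ with $t>0$, which lie in $\supp\phi$, so the argument runs directly on $\Omega$ with no auxiliary ball. The cited theorem is stated for Radon measures on $\mathbb{R}^n$, and a locally finite measure on $\Omega$ need not be the restriction of one. A Hessian measure with infinite mass near $\partial\Omega$, as allowed in Theorem~\ref{thm:gedaechtniskirche}, is an example. So your proof covers exactly the setting the paper needs.

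What the textbook route buys in return is the extra characterization via $\mu$-continuity sets, which the paper never uses.
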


Caffarelli, Nirenberg, and Spruck proved Lemma \ref{thm:k-1=000020konvex}
and its converse with the additional attributes ``smooth'' and ``uniform'',
i.e.:
\begin{lem}[{\cite[Thm.~3, see also Prop.~1.3, Rem.~1.1, p.~270]{CaffarelliNirenbergSpruck3}}]
\label{rem:uniform=000020domain}A \emph{smooth} open and bounded
domain $\Omega$ has a\emph{ uniform }$\Gamma_{k-1}$-boundary if
and only if there exists a \emph{uniformly} $k$-convex function $u\in C^{\infty}(\bar{\Omega})$
with $\Omega=\{u<0\}$.
\end{lem}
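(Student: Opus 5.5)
The plan is to prove the two implications separately. The key tool is the relation between the Hessian of a defining function and the curvatures of its zero level set. Fix $y\in\partial\Omega$ and rotate coordinates as in Definition \ref{def:principals}, so that the inner normal is $e_{n}$ and $\partial\Omega=\{x_{n}=\phi(x')\}$ near $y$ with $D\phi(y')=0$.

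\emph{($\Leftarrow$)} Let $u\in C^{\infty}(\bar{\Omega})$ be uniformly $k$-convex with $\Omega=\{u<0\}$. Then $u$ is subharmonic, $u=0$ on $\partial\Omega$ and $u<0$ inside. The Hopf lemma (or directly uniform subharmonicity) therefore gives $\nabla u\neq0$ on $\partial\Omega$, and by compactness $|\nabla u|\ge c_{0}>0$ there. Since $u$ increases outward, $u_{n}(y)=-|\nabla u(y)|$.
\begin{enumerate}
\item Differentiate $u(x',\phi(x'))=0$ twice at $y'$. Because $D\phi(y')=0$, this gives $u_{ij}+u_{n}\phi_{ij}=0$ for $i,j<n$, i.e.\ $D^{2}\phi(y')=A'/|\nabla u(y)|$, where $A'$ is the upper left $(n{-}1)\times(n{-}1)$ block of $A=D^{2}u(y)$.
\item Use the algebraic fact $S_{k}(\lambda(A+tE_{nn}))=S_{k}(\lambda(A))+t\,S_{k-1}(\lambda(A'))$. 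Since $\lambda(A+tE_{nn})$ stays in $\Gamma_{k}$ for all $t\ge0$, the same reasoning applied to all positive perturbations of $A'$ shows $\lambda(A')\in\Gamma_{k-1}^{n-1}$.
\item For the quantitative bound, note that $\lambda(D^{2}u)$ ranges over a compact subset of $\Gamma_{k}$ by uniform $k$-convexity. By concavity of $S_{k}^{1/k}$ on $\Gamma_{k}$ (Gårding), $S_{k-1}(\lambda(A'))=\partial S_{k}/\partial a_{nn}$ is bounded below by a positive constant on that compact set. Dividing by $|\nabla u|^{k-1}$, which is bounded above on $\partial\Omega$, yields $S_{k-1}(\kappa)\ge c>0$, i.e.\ a uniform $\Gamma_{k-1}$-boundary.
\end{enumerate}

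\emph{($\Rightarrow$)} Let $d$ be the distance to $\partial\Omega$, positive in $\Omega$. It is smooth in a collar $\{d<\delta\}$. At a point $x$ with nearest boundary point $y$, in principal coordinates we have $D^{2}(-d)(x)=\diag\bigl(\kappa_{1}/(1-\kappa_{1}d),\dots,\kappa_{n-1}/(1-\kappa_{n-1}d),0\bigr)$.
\begin{enumerate}
\item Set $w:=-d+Ad^{2}$ in the collar, using a collar smooth up to the boundary from the outside as well, so that $w\in C^{\infty}(\bar{\Omega})$ near $\partial\Omega$. Since $|\nabla d|=1$ and $\nabla d$ is the normal direction, the Hessian is block-diagonal up to $O(d)$ terms: the tangential eigenvalues are $\tilde{\kappa}_{i}=\kappa_{i}+O(d)$ and the normal eigenvalue is $2A$.
\item Then $S_{k}=2A\,S_{k-1}(\tilde{\kappa})+S_{k}(\tilde{\kappa})$. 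The uniform bound $S_{k-1}(\kappa)\ge c$ together with $\kappa\in\Gamma_{k-1}$ (stable under small perturbation for $\delta$ small) gives $S_{k}\ge Ac$ and membership in $\Gamma_{k}$ once $A$ is large. So $w$ is uniformly $k$-convex on $\{d<\delta\}$, and $w<0$ for $0<d<\min(\delta,1/A)$.
\item To extend $w$ inside, take $v:=M(|x-x_{0}|^{2}-R^{2})-\eta$ with $R$ so large that $\Omega\subset B_{R}(x_{0})$. Choose $M,\eta$ so that $v>w$ on $\{d=\delta'\}$ for a small $\delta'$, and $v<w$ on $\{d<\delta'/2\}$; the latter is possible because $w$ is close to $0$ there while $v$ is below $-\eta$.
\item Define $u:=\widetilde{\max}(w,v)$ with a smooth regularized maximum. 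This is a convex function of $(w,v)$, nondecreasing in each argument, with partial derivatives summing to $1$. It therefore preserves uniform $k$-convexity by concavity and ellipticity of $F_{k}^{1/k}$ on $\bar{\Gamma}_{k}$. We get $u=w$ near $\partial\Omega$, $u=v$ deep inside, $u<0$ in $\Omega$ and $u=0$ on $\partial\Omega$, hence $\Omega=\{u<0\}$.
\end{enumerate}

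The main obstacle is the matrix-algebra step in ($\Leftarrow$). One must show that the principal $(n{-}1)$-minor of a matrix with eigenvalues in a compact subset of $\Gamma_{k}$ has eigenvalues uniformly inside $\Gamma_{k-1}$. The eigenvalue-interlacing route is unreliable, so I would first try the derivative identity $\partial S_{k}/\partial a_{nn}=S_{k-1}(A')$ combined with Gårding's inequality. A secondary difficulty is checking that the regularized maximum in ($\Rightarrow$) keeps smoothness up to $\partial\Omega$ and does not create new zeros.
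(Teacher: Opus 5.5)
The paper gives no proof of this lemma; it quotes it from Caffarelli--Nirenberg--Spruck. Your argument is therefore a self-contained substitute rather than a variant of anything in the text. It is the standard argument, and its structure is sound.

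For ($\Leftarrow$) you use the level-set computation $\kappa=\lambda(A')/|\nabla u|$. You combine it with $\partial S_k/\partial a_{nn}=S_{k-1}(\lambda(A'))$, which ellipticity bounds below on the compact set $\{D^2u(y):y\in\partial\Omega\}$.

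For ($\Rightarrow$) you use the collar barrier $-d+Ad^2$, glued to a small quadratic by a regularized maximum $g=\widetilde{\max}$. This preserves uniform $k$-convexity: $D^2g(w,v)\ge g_aD^2w+g_bD^2v$ with $g_a+g_b=1$, and $F_k^{1/k}$ is concave and monotone.

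The citation keeps the paper short. Your version shows exactly which ingredients enter. It also shows that both ``uniform'' qualifiers are automatic for smooth bounded domains, by compactness.

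Two places need tightening.

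\emph{The collar width in ($\Rightarrow$).} In principal coordinates $D^2w$ is exactly diagonal, with tangential entries $(1-2Ad)\kappa_i/(1-\kappa_i d)$ and normal entry $2A$. So the deviation from $\kappa_i$ is $O(Ad)$, not $O(d)$, and for $d>1/(2A)$ the tangential entries change sign. Hence ``$w$ is uniformly $k$-convex on $\{d<\delta\}$'' is false if $\delta$ is fixed before $A$ is taken large. The fix is to choose the constants in this order:
\begin{enumerate}
\item Fix $\delta$ depending only on $\Omega$, then choose $A$.
\item Work on $\{d<\min(\delta,1/(4A))\}$. There the tangential entries are $(1-2Ad)\hat\kappa_i$, with $\hat\kappa_i=\kappa_i/(1-\kappa_id)$ in a fixed compact subset of $\Gamma_{k-1}$ and $1-2Ad\in[1/2,1]$.
\item This gives $S_k(\lambda(D^2w))\ge 2^{2-k}Ac'-C$, where $c'=\min S_{k-1}(\hat\kappa)>0$ and $C=\max|S_k(\hat\kappa)|$.
\item In Step 3 take $\delta'\le 1/(4A)$, for instance $\eta=0.6\,\delta'$ and $MR^2\le 0.1\,\delta'$. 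Then $v\ge-0.7\,\delta'>-0.75\,\delta'\ge w$ on $\{d=\delta'\}$, and $v\le-0.6\,\delta'<-0.5\,\delta'<w$ on $\{d<\delta'/2\}$.
\end{enumerate}

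\emph{Strictness in ($\Leftarrow$), Step 2.} Testing with positive semidefinite perturbations only yields $S_{k-1}\ge 0$, i.e.\ $\lambda(A')\in\bar\Gamma_{k-1}$. Strict membership then follows from the positive lower bound in Step 3, or from running the argument for $A-\varepsilon I$.

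Your worry about interlacing is unfounded. Cauchy interlacing gives $\mu_i\ge\lambda_i$ for $i\le n-1$, where $\mu$ and $\lambda$ are the ordered eigenvalues of $A'$ and $A$. Moreover $(\lambda_1,\dots,\lambda_{n-1})\in\Gamma_{k-1}^{n-1}$ whenever $\lambda\in\Gamma_k$. Together with the monotonicity of $S_{k-1}$ on $\Gamma_{k-1}$, this gives both the membership and the uniform lower bound at once.
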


In \cite[§8]{WangHessian}, an equivalent definition of $k$-convex
functions (he calls them $k$-admissible) is used. To make it transparent
that we can use the results of \cite[§8]{WangHessian}, we state
the equivalence of the two definitions by the following Lemma. The
dual cones are given by
\[
\Gamma_{k}^{*}:=\left\{ \lambda\in\mathbb{R}^{n}~\middle|~\langle\lambda,\nu\rangle>0\text{ for all }\nu\in\Gamma_{k}\right\} .
\]

\begin{lem}[{equals \cite[Lem.~2.2]{TrudingerWangHM2}}]
A distribution $T$ on $\Omega$ is equivalent to a $k$-convex function
in $\Omega$ if and only if
\[
T(a^{ij}D_{ij}\phi)\geq0
\]
for all $\phi\geq0$, $\phi\in C_{c}^{\infty}(\Omega)$ and for all
constant symmetric matrices $A=a^{ij}$ with eigenvalues $\lambda(A)\in\Gamma_{k}^{*}$.
\end{lem}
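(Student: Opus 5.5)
This is the classical Trudinger--Wang characterization, and I would prove it in two directions, using mollification (Lemma \ref{lem:Mollifier}) to reduce to smooth functions. The algebraic core, for both directions, is the duality between $\bar{\Gamma}_{k}$ and $\bar{\Gamma}_{k}^{*}$. Since $\Gamma_{k}$ is an open convex cone and is invariant under permutations of coordinates, I expect a symmetric matrix $M$ to satisfy $\lambda(M)\in\bar{\Gamma}_{k}$ if and only if $\operatorname{tr}(AM)\ge0$ for every symmetric $A$ with $\lambda(A)\in\Gamma_{k}^{*}$. To prove this, diagonalize $M$. Then $\operatorname{tr}(AM)=\sum_{i}a^{ii}\lambda_{i}(M)$. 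The diagonal $(a^{ii})$ of $A$ lies in the convex hull of the permutations of $\lambda(A)$ (Schur--Horn), and this convex hull stays in $\Gamma_{k}^{*}$ because $\Gamma_{k}^{*}$ is convex and symmetric. The bipolar theorem $\bar{\Gamma}_{k}=(\Gamma_{k}^{*})^{*}$, applied to closed convex cones, then gives both implications.

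\emph{($\Rightarrow$)} Let $u\in\Phi^{k}(\Omega)$. By Lemma \ref{lem:Mollifier}, $u_{h}:=u*\rho_{h}$ is smooth and $k$-convex on $\Omega_{h}$, so $\lambda(D^{2}u_{h})\in\bar{\Gamma}_{k}$ pointwise. The algebraic fact gives $a^{ij}D_{ij}u_{h}\ge0$. Integrating against $\phi\ge0$ and integrating by parts twice, $\int u_{h}\,a^{ij}D_{ij}\phi\ge0$. Letting $h\to0$ with $u_{h}\to u$ in $L_{\mathrm{loc}}^{1}$ yields $T(a^{ij}D_{ij}\phi)\ge0$ for $T=u$. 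One technical point must be checked: a $k$-convex $u$ is subharmonic and $\not\equiv-\infty$ on each component, so it lies in $L_{\mathrm{loc}}^{1}$ and defines a distribution.

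\emph{($\Leftarrow$)} Assume $T(a^{ij}D_{ij}\phi)\ge0$. Taking $A=I$ (note $(1,\dots,1)\in\Gamma_{k}^{*}$, since $\Gamma_{k}\subset\Gamma_{1}$) shows that $T$ is a nonnegative-Laplacian distribution. Hence $T$ is represented by a subharmonic function $u$, namely the decreasing limit of $T*\rho_{h}$, which is upper semicontinuous. The mollifications $T_{h}:=T*\rho_{h}$ are smooth and satisfy $a^{ij}D_{ij}T_{h}=(T(a^{ij}D_{ij}\rho_{h}(x-\cdot)))\ge0$. By the algebraic fact, $\lambda(D^{2}T_{h})\in\bar{\Gamma}_{k}$, so $T_{h}$ is a smooth $k$-convex function in the viscosity sense.

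The main obstacle is to show that the decreasing limit $u=\lim T_{h}$ is $k$-convex in the viscosity sense of the paper, i.e.\ stability of viscosity subsolutions under monotone decreasing limits. I would argue as follows. Suppose a quadratic $\phi$ touches $u$ from above at $x_{0}$ with $F_{k}[\phi]<0$, i.e.\ $\lambda(D^{2}\phi)\notin\bar{\Gamma}_{k}$. Replace $\phi$ by $\phi+\delta|x-x_{0}|^{2}$ with $\delta$ small enough that the eigenvalues still lie outside $\bar{\Gamma}_{k}$; this makes the maximum of $u-\phi$ strict. Because $T_{h}\searrow u$ and $u$ is upper semicontinuous, the maxima of $T_{h}-\phi$ on a small ball occur at points $x_{h}\to x_{0}$ in the interior. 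At $x_{h}$ we get $D^{2}T_{h}\le D^{2}\phi$. Since $\bar{\Gamma}_{k}+\bar{\Gamma}_{n}\subset\bar{\Gamma}_{k}$, this forces $\lambda(D^{2}\phi)\in\bar{\Gamma}_{k}$, a contradiction.
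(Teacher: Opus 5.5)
The paper gives no proof of this lemma; it quotes \cite[Lem.~2.2]{TrudingerWangHM2}, so the yardstick is the standard argument there. Your ($\Leftarrow$) direction is essentially that argument and is sound. One repair is needed there. $D^{2}T_{h}(x_{h})\le D^{2}\phi$ does not literally place $\lambda(D^{2}\phi)$ in $\bar{\Gamma}_{k}+\bar{\Gamma}_{n}$, because eigenvalue vectors are not additive. Use Weyl's monotonicity of ordered eigenvalues, or more simply your own trace characterization: $\operatorname{tr}(A\,D^{2}\phi)\ge\operatorname{tr}(A\,D^{2}T_{h}(x_{h}))\ge0$. This holds because $\Gamma_{k}\supset\Gamma_{n}$ gives $\Gamma_{k}^{*}\subset\bar{\Gamma}_{n}$, so every admissible $A$ is positive semidefinite.

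The genuine gap is in ($\Rightarrow$). There the whole content is outsourced to Lemma \ref{lem:Mollifier}, i.e.\ that $u*\rho_{h}$ is $k$-convex. But in \cite{TrudingerWangHM2} that lemma (Lem.~2.3 there) is derived \emph{from} the present characterization. The distributional cone inequality $a^{ij}D_{ij}u\ge0$ is exactly what survives convolution with $\rho_{h}\ge0$. From the viscosity definition alone it is not at all clear that averages of $k$-convex functions are $k$-convex. So this direction is circular; it only looks admissible because the paper quotes both lemmas as black boxes.

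What is missing is a direct passage from the viscosity definition to the distributional inequality. If a quadratic $q$ touches $u$ from above at $x_{0}$, so does $q+\frac{1}{2}\langle P(x-x_{0}),x-x_{0}\rangle$ for every $P\ge0$. Hence $S_{k}(\lambda(D^{2}q+P))\ge0$ for all $P\ge0$, which forces $\lambda(D^{2}q)\in\bar{\Gamma}_{k}$; this is a standard consequence of G{\aa}rding's theory of $S_{k}$. Note that the definition by itself only gives $F_{k}[q]\ge0$. Your claim that the smooth $k$-convex $u_{h}$ satisfies $\lambda(D^{2}u_{h})\in\bar{\Gamma}_{k}$ needs this same step, since the paper only states the converse for $C^{2}$ functions.

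Your algebraic lemma then yields $\operatorname{tr}(A\,D^{2}q)\ge0$, i.e.\ $u$ is a viscosity subsolution of the constant-coefficient equation $a^{ij}D_{ij}u\ge0$. Admissible $A$ can be singular: for $k\ge2$, $(0,1,\dots,1)\in\Gamma_{k}^{*}$, since $\lambda\in\Gamma_{k}$ forces $\lambda_{2}+\dots+\lambda_{n}>0$. So pass to $A+\varepsilon I$, which is still admissible because $(1,\dots,1)\in\Gamma_{k}^{*}$ and $\Gamma_{k}^{*}$ is a convex cone. Then $y\mapsto u((A+\varepsilon I)^{1/2}y)$ is viscosity subharmonic, hence subharmonic in the classical sense, so its distributional Laplacian is nonnegative. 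Changing variables back gives $T((a^{ij}+\varepsilon\delta^{ij})D_{ij}\phi)\ge0$, and letting $\varepsilon\to0$ concludes. With this, Lemma \ref{lem:Mollifier} becomes a consequence of the present lemma rather than an ingredient of its proof.
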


\begin{lem}[{\cite[Lem.~8.1]{WangHessian}}]
\label{lem:Grenzwert=000020k-convex}Let $u_{j}$ be a sequence of
$k$-convex functions which converges to an upper semi-continuous
function $u$ almost everywhere and $\{u=-\infty\}$ has measure zero.
Then $u$ is $k$-convex and $u_{j}$ converges to $u$ pointwise.
\end{lem}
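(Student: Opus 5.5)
We will prove the statement through the distributional characterization of $k$-convexity from \cite[Lem.~2.2]{TrudingerWangHM2} stated just above. The strategy is to upgrade the almost-everywhere convergence to $L_{\mathrm{loc}}^{1}$ convergence and then pass to the limit in the inequalities $u_{j}(a^{ij}D_{ij}\phi)\ge0$. Since every $k$-convex function is subharmonic, we can work with the classical compactness theory for subharmonic functions. The final step is to identify the distributional limit with the given upper semi-continuous $u$ pointwise.

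\emph{Step 1: local upper bound.} First we show that $(u_{j})$ is locally uniformly bounded above; we expect this to be the main obstacle. Suppose not. Then there are a compact $K$, points $x_{j}\in K$ and $M_{j}:=\sup_{K'}u_{j}\to\infty$ on a slightly larger compact $K'$, with $u_{j}(x_{j})$ comparable to $M_{j}$. The normalized functions $v_{j}:=u_{j}/M_{j}$ are subharmonic and bounded above by $1$ on $K'$, and they tend to $0$ almost everywhere because $u_{j}\to u$ is finite almost everywhere. By the compactness theorem for subharmonic sequences bounded above (e.g.\ Hörmander), a subsequence converges in $L_{\mathrm{loc}}^{1}$ to a subharmonic function $v$. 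Since $v_{j}\to0$ almost everywhere, $v=0$ almost everywhere, and $v$ is not $\equiv-\infty$. Hartogs' lemma now gives $\limsup_{j}\sup_{K}v_{j}\le\sup_{K}v=0$. This contradicts $v_{j}(x_{j})\gtrsim1$.

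\emph{Step 2: $L_{\mathrm{loc}}^{1}$ convergence and $k$-convexity.} With the local upper bound, the same compactness theorem shows the following. Every subsequence of $(u_{j})$ has a further subsequence converging in $L_{\mathrm{loc}}^{1}$, and it cannot tend to $-\infty$ locally uniformly because $\{u=-\infty\}$ is a null set. The $L_{\mathrm{loc}}^{1}$ limit must coincide almost everywhere with the almost-everywhere limit $u$, so the whole sequence converges in $L_{\mathrm{loc}}^{1}$. The inequalities $\int u_{j}\,a^{ij}D_{ij}\phi\,\d x\ge0$, for $\phi\ge0$ in $C_{c}^{\infty}(\Omega)$ and constant symmetric $A$ with $\lambda(A)\in\Gamma_{k}^{*}$, pass to the limit. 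Hence $u$ is, as a distribution, equivalent to a $k$-convex function $\tilde{u}$, namely the upper semi-continuous regularization $\tilde{u}(x)=\lim_{r\to0}\frac{1}{|B_{r}|}\int_{B_{r}(x)}u$ (the standard representative, also obtainable as the decreasing limit of the mollifications $u*\rho_{h}$ in Lemma \ref{lem:Mollifier}).

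\emph{Step 3: pointwise identification.} Fix $x\in\Omega$. The sub-mean value property and the $L_{\mathrm{loc}}^{1}$ convergence give
\[
\limsup_{j}u_{j}(x)\le\lim_{j}\frac{1}{|B_{r}|}\int_{B_{r}(x)}u_{j}=\frac{1}{|B_{r}|}\int_{B_{r}(x)}u.
\]
Letting $r\to0$ yields $\limsup_{j}u_{j}(x)\le\tilde{u}(x)$. Since $u$ is upper semi-continuous and agrees with $\tilde{u}$ almost everywhere, we get $\tilde{u}\le u$. The reverse inequality, and with it $\liminf_{j}u_{j}(x)\ge u(x)$ at every point, is where the structure of $k$-convex functions must enter beyond subharmonicity. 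For $k>n/2$, which is the only case used in this paper, I would use the local Hölder equicontinuity of locally bounded $k$-convex functions \cite[Thm.~2.7]{TrudingerWangHM2}. This requires local boundedness below, which I would get from the Aleksandrov-type estimate; granting it, equicontinuity converts almost-everywhere convergence into locally uniform convergence. Then the limit is continuous and equal to $\tilde{u}$ everywhere. For general $k$, I would instead follow Wang's argument via the quasi-continuity of $k$-convex functions and accept the pointwise statement only in the sense of $\tilde{u}$.
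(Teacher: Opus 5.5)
\textbf{The statement as written cannot be proved, and your Step 3 is where this shows.} Take $u_{j}\equiv0$ and $u=\mathbf{1}_{\{x_{0}\}}$ for a point $x_{0}\in\Omega$. Then:
\begin{itemize}
\item every $u_{j}$ is $k$-convex;
\item $u$ is upper semi-continuous and $\{u=-\infty\}=\emptyset$;
\item $u_{j}\to u$ off the null set $\{x_{0}\}$.
\end{itemize}
Yet $u$ violates the sub-mean value inequality at $x_{0}$, so it is not even subharmonic, and $u_{j}(x_{0})\not\to u(x_{0})$.

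Your inequalities $\limsup_{j}u_{j}\le\tilde{u}\le u$ are correct, given Step 2. The reverse inequality $u\le\tilde{u}$, however, is not something $k$-convex structure could supply: almost-everywhere convergence carries no information about $u$ on a null set. For $k>n/2$ and granting local bounds, equicontinuity does give $u_{j}\to\tilde{u}$ locally uniformly. Your sentence ``the limit is continuous and equal to $\tilde{u}$ everywhere'' is true of \emph{that} limit, but it does not identify the given $u$ with it. What Steps 2--3 actually prove is: $\tilde{u}$ is $k$-convex, $\tilde{u}=u$ a.e., $\tilde{u}\le u$, and $u_{j}\to\tilde{u}$. The equality $u=\tilde{u}$ needs an extra hypothesis, such as convergence at every point.

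The paper gives no proof of its own here, only the citation of Wang. It uses the lemma only through Corollary~\ref{cor:Grenzwerts=0000E4tze}(2), where $k>n/2$ and the sequence is locally uniformly bounded and converges everywhere. In that setting your argument is complete and simpler:
\begin{itemize}
\item dominated convergence replaces Step 1;
\item the distributional inequalities pass to the limit;
\item H\"older equicontinuity makes the everywhere-limit $u$ continuous;
\item two continuous functions that agree a.e.\ agree everywhere.
\end{itemize}

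\textbf{Step 1 has a separate gap.} You pick $x_{j}\in K$ with $u_{j}(x_{j})$ comparable to $M_{j}=\sup_{K'}u_{j}$. Nothing forces the supremum over the larger set $K'$ to be attained on $K$, even up to a constant. Hartogs' lemma only yields $\sup_{K}u_{j}=o(\sup_{K'}u_{j})$, which is no contradiction.

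Since the argument uses nothing beyond subharmonicity, it cannot be repaired as it stands. Here is a counterexample for $k=1$ and $n\ge2$:
\begin{itemize}
\item let $T_{j}$ be the $\varepsilon_{j}$-neighbourhood in $B_{2}$ of the segment $[-1,2)\times\{0\}$, with $\varepsilon_{j}\to0$;
\item let $h_{j}$ be the harmonic function on $T_{j}$ with boundary values $0$ on $\partial T_{j}\cap B_{2}$ and $1$ on $\partial T_{j}\cap\partial B_{2}$, extended by $0$ to the rest of $B_{2}$;
\item put $u_{j}:=j\,h_{j}/h_{j}(0)$.
\end{itemize}
These functions are continuous, nonnegative and harmonic where positive, hence subharmonic. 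They vanish eventually at every point off the segment, so $u_{j}\to0$ a.e., while $u_{j}(0)=j$.

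So for $k=1$ even the corrected conclusion ($u_{j}\to\tilde{u}$ pointwise) fails. For $k>n/2$, a local upper bound would need either an argument that genuinely uses $k$-convexity, or an explicit local boundedness assumption as in the paper's application.
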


\begin{lem}[{Concatenation, equals \cite[Lem.~2.5]{TrudingerWangHM2}}]
\label{lem:max}Let $u_{1},\dots,u_{m}\in\Phi^{k}(\Omega)$ and $f$
be a convex, nondecreasing function in $\mathbb{R}^{m}$. Then the
composite function $w=f(u_{1},\dots,u_{m})\in\Phi^{k}(\Omega)$ also.
\end{lem}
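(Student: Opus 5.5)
The plan is to reduce to the smooth case, where the statement is a pointwise fact about matrices, and then pass to the limit using Lemma \ref{lem:Mollifier} and Lemma \ref{lem:Grenzwert=000020k-convex}.

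\emph{Smooth case.} Let $f\in C^{2}(\mathbb{R}^{m})$ be convex and nondecreasing, and let $u_{1},\dots,u_{m}\in C^{2}(\Omega')\cap\Phi^{k}(\Omega')$. By the chain rule,
\[
D^{2}w=\sum_{i}\partial_{i}f(u)\,D^{2}u_{i}+\sum_{i,j}\partial_{ij}f(u)\,Du_{i}\otimes Du_{j}.
\]
The eigenvalue vectors of the $D^{2}u_{i}$ lie in $\bar{\Gamma}_{k}$, and the coefficients satisfy $\partial_{i}f\ge0$. The set of symmetric matrices with eigenvalues in $\bar{\Gamma}_{k}$ is a convex cone, because $S_{k}^{1/k}$ is concave and homogeneous (the fact already used for (\ref{eq:superadditivity})). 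Hence the first sum lies in this cone. The second sum is positive semidefinite because $D^{2}f\ge0$. Adding a positive semidefinite matrix keeps the eigenvalues in $\bar{\Gamma}_{k}$: the eigenvalues increase monotonically by Weyl's inequalities, and $\bar{\Gamma}_{k}$ is invariant under increasing the entries, by its very definition. So $w\in\Phi^{k}(\Omega')$.

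\emph{General case.} Let $\eta_{\varepsilon}$ be a symmetric mollifier on $\mathbb{R}^{m}$ and set $f_{\varepsilon}:=f*\eta_{\varepsilon}$. This function is smooth, convex and nondecreasing. By Jensen's inequality $f_{\varepsilon}\ge f$, and $f_{\varepsilon}\to f$ locally uniformly. With $u_{i,h}:=u_{i}*\rho_{h}$ from Lemma \ref{lem:Mollifier}, the smooth case gives
\[
w_{h}:=f_{h}(u_{1,h},\dots,u_{m,h})\in\Phi^{k}(\Omega_{h}).
\]
Since $u_{i,h}\searrow u_{i}$ pointwise and $f$ is continuous and nondecreasing, $w_{h}\to f(u_{1},\dots,u_{m})=w$ on the set where all $u_{i}>-\infty$. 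That set has full measure. The limit $w$ is upper semicontinuous, because $f$ is continuous and nondecreasing and the $u_{i}$ are upper semicontinuous; at points where some $u_{i}=-\infty$ we define $f$ by its monotone limit. The set $\{w=-\infty\}$ is contained in $\bigcup_{i}\{u_{i}=-\infty\}$, which is a null set. Therefore Lemma \ref{lem:Grenzwert=000020k-convex}, applied on each $\Omega'\subset\subset\Omega$, yields $w\in\Phi^{k}(\Omega)$. For $k>n/2$ the situation is simpler: all functions are continuous, and the convergence is even locally uniform.

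\emph{Main obstacle.} The main obstacle is the bookkeeping of the values $-\infty$ and of upper semicontinuity in the limit step. I expect the linear-algebra step (cone plus positive semidefinite matrix) to be routine.
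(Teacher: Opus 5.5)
Your proof is correct. The paper gives no proof of this lemma and simply imports it from Trudinger--Wang (their Lem.~2.5). Your argument is the standard proof behind that citation:
\begin{itemize}
\item for smooth data, apply the chain rule and use that $\{A:\lambda(A)\in\bar{\Gamma}_k\}$ is a convex cone that stays stable when a positive semidefinite matrix is added;
\item in general, mollify both $f$ and the $u_i$ and pass to the limit.
\end{itemize}
One optional tightening: the appendix limit lemma is phrased only for a.e.\ convergence, and you can avoid relying on that phrasing. For convex $f$ and symmetric $\eta$, the map $h\mapsto f*\eta_h$ is nondecreasing, so $w_h$ decreases to $w$ everywhere, including where some $u_i=-\infty$. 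Decreasing limits of $k$-convex functions are $k$-convex, which gives the conclusion directly.
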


In particular, the maximum of two $k$-convex functions is $k$-convex.
\begin{lem}[{Monotonicity, equals \cite[ Cor.~2.4]{TrudingerWangHM1}}]
\label{lem:monotonicity}If $k>n/2$, $\Omega$ open and bounded,
and $u,w\in\Phi^{k}(\Omega)\cap C(\bar{\Omega})$ satisfy $u=w$ on
$\partial\Omega$ and $u\le w$ in $\Omega$, then
\[
\mu_{k}[u](\Omega)\geq\mu_{k}[w](\Omega).
\]
\end{lem}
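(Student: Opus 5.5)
This monotonicity statement is cited from Trudinger--Wang, and I would prove it first for smooth functions and then pass to the limit. The smooth case is an integration by parts (divergence structure); the general case uses the weak{*} continuity (\ref{eq:weak*=000020conti}) together with a boundary perturbation that keeps the two functions ordered and equal near $\partial\Omega$.

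\emph{Smooth case.} Assume $u,w\in C^{2}(\bar{\Omega})$ are $k$-convex, $u\le w$ in $\Omega$ and $u=w$ on $\partial\Omega$. I interpolate linearly, $u_{t}:=u+t(w-u)$ for $t\in[0,1]$; each $u_{t}$ is $k$-convex because $\bar{\Gamma}_{k}$ is convex. Then
\[
\frac{d}{dt}\int_{\Omega}F_{k}[u_{t}]\,dx=\int_{\Omega}S_{k}^{ij}(D^{2}u_{t})\,D_{ij}(w-u)\,dx .
\]
Since the cofactor-type matrix $S_{k}^{ij}$ is divergence free, this equals $\int_{\partial\Omega}S_{k}^{ij}D_{j}(w-u)\,\nu_{i}\,dS$. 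On the boundary $w-u$ vanishes and is $\ge0$ inside, so $D(w-u)=-|D(w-u)|\nu$, where $\nu$ is the outer normal. The integrand is therefore $-|D(w-u)|\,S_{k}^{ij}\nu_{i}\nu_{j}\le0$, because $S_{k}^{ij}$ is positive semidefinite on $\bar{\Gamma}_{k}$. Hence $t\mapsto\mu_{k}[u_{t}](\Omega)$ is nonincreasing, which gives $\mu_{k}[u](\Omega)\ge\mu_{k}[w](\Omega)$. If the smooth boundary is awkward, I would instead work on a smooth subdomain where $w-u$ has a regular level set, using Sard as in Lemma \ref{thm:k-1=000020konvex}.

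\emph{Approximation.} For general $u,w\in\Phi^{k}(\Omega)\cap C(\bar{\Omega})$ I would not approximate up to the boundary; I would exploit the continuity instead. Given $\varepsilon>0$, set $w_{\varepsilon}:=\max(u,w-\varepsilon)$. This function is $k$-convex by Lemma \ref{lem:max}, satisfies $u\le w_{\varepsilon}\le w$, and coincides with $u$ on the neighborhood $\{w-u<\varepsilon\}$ of $\partial\Omega$. Choose an open $U\subset\subset\Omega$ with smooth boundary containing $\{w-u\ge\varepsilon\}$. On a slightly larger domain, mollify: $u*\rho_{h}$ and $w_{\varepsilon}*\rho_{h}$ converge locally uniformly by Lemma \ref{lem:Mollifier}. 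Near $\partial U$ they coincide, because $w_{\varepsilon}=u$ on a neighborhood of $\partial U$ and $h$ is small. They remain ordered, since mollification preserves $\le$. The smooth case on $U$ then yields $\mu_{k}[u*\rho_{h}](U)\ge\mu_{k}[w_{\varepsilon}*\rho_{h}](U)$. Because the two measures agree near $\partial U$, this inequality also holds on a compact $K$ and on an open $V$ with $\{w_{\varepsilon}\ne u\}\subset V\subset K\subset U$ when applied to differences. Passing $h\to0$ via (\ref{eq:weak*=000020conti}) and Lemma \ref{lem:weak*=000020convergence} gives $\mu_{k}[u](\Omega)\ge\mu_{k}[w_{\varepsilon}](\Omega)$, since the two measures are equal outside $\{w-u\ge\varepsilon\}$.

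Finally let $\varepsilon\to0$. Then $w_{\varepsilon}\to w$ uniformly, and weak{*} convergence gives $\mu_{k}[w](V')\le\liminf_{\varepsilon\to0}\mu_{k}[w_{\varepsilon}](V')\le\mu_{k}[u](\Omega)$ for every open $V'\subset\Omega$. Inner regularity then gives the claim.

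\emph{Main obstacle.} I expect the hardest part to be the bookkeeping in the weak{*} limits: open versus compact sets and mass escaping near $\partial U$. This is exactly why I would compare only differences on the region where the functions actually differ, where both measures are identical near the boundary. The smooth boundary-integral sign argument is standard.
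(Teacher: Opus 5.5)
Your proof is correct, and it takes a genuinely different route from the paper, which gives no argument for this lemma and simply imports it from Trudinger--Wang. You derive it instead from tools the paper already has: mollification, the weak$^*$ continuity (\ref{eq:weak*=000020conti}), Lemma \ref{lem:max}, and the device $\max(u,w-\varepsilon)$ that also drives Lemma \ref{lem:AK=000020Massabsch=0000E4tzung}.

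Notice how little of your smooth case you actually use. Since $u*\rho_h$ and $w_\varepsilon*\rho_h$ coincide near $\partial U$, their difference $\eta$ is compactly supported in $U$. Hence $\frac{d}{dt}\int_U F_k[u_t]\,dx=\int_U S_k^{ij}(D^2u_t)D_{ij}\eta\,dx=-\int_U D_iS_k^{ij}(D^2u_t)\,D_j\eta\,dx=0$, and you get \emph{equality} of the smooth masses (the null-Lagrangian property of $F_k$). The normal-derivative sign, the semidefiniteness of $S_k^{ij}$, the smoothness of $\partial U$ and Sard are therefore all superfluous. After $h\to0$ you obtain $\mu_k[w_\varepsilon](\Omega)=\mu_k[u](\Omega)$, and the inequality comes solely from the final step $\mu_k[w](\Omega)\le\liminf_{\varepsilon\to0}\mu_k[w_\varepsilon](\Omega)$.

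The passage $h\to0$ is cleanest with a single test function $\chi\in C_c(U)$ equal to $1$ on a neighbourhood of the compact set $C:=\{w-u\ge\varepsilon\}$. Weak$^*$ convergence gives $\int\chi\,d\mu_k[u]=\int\chi\,d\mu_k[w_\varepsilon]$. Since the two measures agree on the open set $\Omega\setminus C$ by locality, this yields $\mu_k[u](C)=\mu_k[w_\varepsilon](C)$, both sides finite, with no need for $K$, $V$ or inner regularity. In your $K$/$V$ formulation you would need $V\supset C$, not merely $V\supset\{w_\varepsilon\neq u\}$, so that the $h$-neighbourhood of the set where the mollified functions differ fits inside $V$. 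You would then also have to let $K$ shrink to $C$ to kill the term $\mu_k[u](K\setminus V)$.

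What your route buys over the citation is transparency about the hypotheses. The assumption $u\le w$ is used only to identify $\lim_{\varepsilon\to0}w_\varepsilon=\max(u,w)$ with $w$. Continuity on $\bar\Omega$ is used only to guarantee $\{w-u\ge\varepsilon\}\subset\subset\Omega$ for every $\varepsilon>0$. So you in fact prove the lemma under exactly the boundary hypothesis of the comparison principle (Lemma \ref{lem:comparison=000020principle}), where the paper invokes monotonicity on $\{u'>w\}$. The same hypothesis is the one in the discontinuous-boundary setting of Theorem \ref{thm:gedaechtniskirche}, and infinite total masses cause no trouble either. The citation is shorter but leaves all of this to Trudinger--Wang.
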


\begin{lem}[{Local Hölder continuity, cf. \cite[(3.6)]{TrudingerWangHM1}}]
\label{lem:local=000020H=0000F6lder=000020continuity}Let $k>n/2$
and $\Omega$ open. A function $u\in\Phi^{k}(\Omega)$ and $x,y\in\Omega$
satisfy the local Hölder continuity
\[
\frac{|u(x)-u(y)|}{\left|x{-}y\right|^{\alpha}}\le\frac{\osc u}{\min(\dist(x,\partial\Omega),\dist(y,\partial\Omega))^{\alpha}}
\]
with $\alpha=2-n/k$.
\end{lem}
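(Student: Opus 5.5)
This estimate is cited from Trudinger--Wang rather than proved in the paper. To prove it, I would reduce to a Hölder estimate on balls for $k$-convex functions, obtained by comparison with radial $k$-convex functions, and then do a case distinction on $|x-y|$ relative to the distances to the boundary.

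\emph{Radial barrier.} The radial function $r^{\alpha}$ with $\alpha=2-n/k$ satisfies $\mu_k[r^\alpha]=0$ on $\mathbb{R}^n\setminus\{0\}$. Indeed, for radial $\phi(r)$ the Hessian eigenvalues are $\phi''$ (once) and $\phi'/r$ ($n-1$ times). Hence
\[
S_k=\binom{n-1}{k-1}\phi''(\phi'/r)^{k-1}+\binom{n-1}{k}(\phi'/r)^{k}.
\]
For $\phi=r^\alpha$ one has $\phi''=(\alpha-1)\phi'/r$, and $S_k=0$ reduces to $k(\alpha-1)+(n-k)=0$, i.e.\ $\alpha=2-n/k\in\left]0,1\right]$ for $k>n/2$. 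Moreover $r^\alpha$ is $k$-convex: for $0<\alpha<1$ it is a $C^2$ function on $\mathbb{R}^n\setminus\{0\}$ with eigenvalues in $\bar\Gamma_k$, and at $0$ it has a cusp minimum, so no quadratic polynomial touches it from above there.

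\emph{Comparison on balls.} Fix $y\in\Omega$ and $d:=\dist(y,\partial\Omega)$ (WLOG $d\le\dist(x,\partial\Omega)$, by symmetry). Set $M:=\osc u$, the oscillation over $\Omega$; if $M=\infty$ there is nothing to show. Consider
\[
w(z):=u(y)+M\,|z-y|^{\alpha}/d^{\alpha}
\]
on the punctured ball $B_d(y)\setminus\{y\}$. At $z=y$ we have $w=u$, and on $\partial B_d(y)$ we have $w=u(y)+M\ge u$. Since $\mu_k[w]=0\le\mu_k[u]$ away from $y$, the comparison principle (Lemma \ref{lem:comparison=000020principle}), applied on the punctured ball using continuity of $u$ for $k>n/2$, gives $u\le w$ on $B_d(y)$. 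Hence
\[
u(z)-u(y)\le M|z-y|^\alpha/d^\alpha .
\]
Running the same argument centred at $x$, with $x$'s larger distance to the boundary, gives the reverse inequality $u(y)-u(x)\le M|x-y|^\alpha/\dist(x,\partial\Omega)^\alpha\le M|x-y|^\alpha/d^\alpha$ whenever $|x-y|<\dist(x,\partial\Omega)$.

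\emph{Case distinction.} If $|x-y|\ge d$, the claim is trivial, since then $|u(x)-u(y)|\le M\le M|x-y|^\alpha/d^\alpha$. If $|x-y|<d$, both $x\in B_d(y)$ and $y\in B_{\dist(x,\partial\Omega)}(x)$ hold, so the two one-sided bounds above combine to give the inequality with $\min(\dist(x,\partial\Omega),\dist(y,\partial\Omega))=d$.

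\emph{Main obstacle.} The delicate step is justifying the comparison principle on the punctured ball. Both the point singularity at the centre and the requirement that $\{u\ge w+\varepsilon\}$ be compactly contained in the domain need care. I would handle this by replacing $w$ with $w+\varepsilon$: near $\partial B_d$ we then have $u\le w$ strictly, and near the centre $u<w+\varepsilon$ by continuity of $u$. So $\{u\ge w+\varepsilon\}$ is compactly contained in the punctured ball, and letting $\varepsilon\to0$ gives $u\le w$.
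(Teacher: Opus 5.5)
Your proposal is correct and is essentially the standard Trudinger--Wang argument behind the cited estimate (the paper itself gives no proof): compare $u$ on the punctured ball $B_d(y)\setminus\{y\}$ with the radial barrier $u(y)+\osc u\,|z-y|^{\alpha}/d^{\alpha}$ using the comparison principle, then exchange the roles of $x$ and $y$. Two small points you should make explicit: $S_k=0$ alone does not place the barrier's eigenvalues, which are proportional to $(\alpha-1,1,\dots,1)$, in $\bar\Gamma_k$, but $S_j=\binom{n-1}{j-1}\bigl(\frac{n-j}{j}-\frac{n-k}{k}\bigr)\ge0$ for all $j\le k$ does; and since $\partial B_d(y)$ may touch $\partial\Omega$, the boundary check must rest on the global bound $u\le\sup_\Omega u\le u(y)+\osc u$, which confines $\{u\ge w+\varepsilon\}$ to the closed ball of radius $d(1-\varepsilon/\osc u)^{1/\alpha}<d$ for $0<\varepsilon<\osc u$.
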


Since a locally uniformly Hölder continuous set of functions is also
locally equicontinuous, the Arzelà--Ascoli theorem implies the following
well-known consequences:
\begin{cor}
\label{cor:Grenzwerts=0000E4tze}Let $k>n/2$ and $\Omega$ be open.
\begin{enumerate}
\item A locally bounded set of $k$-convex functions on $\Omega$ is relatively
compact. 
\item If a locally uniformly bounded sequence of $k$-convex functions on
$\Omega$ converges pointwise, it converges locally uniformly to a
$k$-convex function on $\Omega$.
\item The supremum of a nonempty locally uniformly bounded set of $k$-convex
functions on $\Omega$ is $k$-convex.
\end{enumerate}
\end{cor}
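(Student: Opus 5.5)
The three parts all follow from the local Hölder estimate (Lemma \ref{lem:local=000020H=0000F6lder=000020continuity}) together with Arzelà--Ascoli and the closedness of $\Phi^{k}$ under limits (Lemma \ref{lem:Grenzwert=000020k-convex}). Fix a compact $K\subset\Omega$ and set $d:=\dist(K,\partial\Omega)/2>0$. Let $K':=\{x\,|\,\dist(x,K)\le d\}$, a compact subset of $\Omega$. If a family $\mathcal{F}\subset\Phi^{k}(\Omega)$ is bounded by $C_{K'}$ on $K'$, apply the lemma to each $u\in\mathcal{F}$ restricted to the open set $\Int K'\supset K$, where $\osc u\le2C_{K'}$. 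This gives, for $x,y\in K$ with $|x-y|<d/2$,
\[
|u(x)-u(y)|\le 2C_{K'}\,(d/2)^{-\alpha}|x-y|^{\alpha},\qquad\alpha=2-n/k>0 .
\]
So $\mathcal{F}$ is uniformly bounded and equicontinuous on every compact subset of $\Omega$.

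For (1), Arzelà--Ascoli on an exhaustion $K_{1}\subset K_{2}\subset\cdots$ of $\Omega$ by compact sets, combined with a diagonal argument, shows that every sequence in $\mathcal{F}$ has a subsequence converging locally uniformly to some continuous $u$. This $u$ is upper semicontinuous and real-valued, so $\{u=-\infty\}=\emptyset$. Lemma \ref{lem:Grenzwert=000020k-convex} (or, directly, the viscosity definition, which is stable under uniform limits) then gives $u\in\Phi^{k}(\Omega)$. Hence $\mathcal{F}$ is relatively compact in $C_{\mathrm{loc}}(\Omega)$, with all limit points in $\Phi^{k}(\Omega)$.

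For (2), let the sequence converge pointwise to $u$. By (1), every subsequence has a further subsequence converging locally uniformly, and its limit must equal the pointwise limit $u$. The standard subsequence argument therefore yields locally uniform convergence of the whole sequence to $u$, and $u\in\Phi^{k}(\Omega)$ by (1).

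For (3), let $\mathcal{F}$ be nonempty and locally uniformly bounded, and set $s:=\sup\mathcal{F}$. The equicontinuity estimate passes to suprema, so $s$ is continuous, in particular upper semicontinuous. Choose a countable subset $\{u_{1},u_{2},\dots\}\subset\mathcal{F}$ whose pointwise supremum equals $s$ on a countable dense set $D$. Put $w_{N}:=\max(u_{1},\dots,u_{N})$; these lie in $\Phi^{k}$ by Lemma \ref{lem:max}, are bounded by the same local bounds, increase, and satisfy $w_{N}\le s$. Their pointwise limit $w$ is therefore continuous by the same equicontinuity, and it agrees with $s$ on $D$; since both are continuous, $w=s$. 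Then (2) gives $s\in\Phi^{k}(\Omega)$. The only delicate point is this countable reduction; it is resolved by the equicontinuity, which makes both $s$ and $w$ continuous, so equality on $D$ suffices.
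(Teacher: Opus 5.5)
Your proof is correct. For (1) and (2) it is the paper's argument: the local H\"older estimate gives local equicontinuity, Arzel\`a--Ascoli gives compactness, and Wang's closure lemma shows the limits are $k$-convex. You just write it out in more detail.

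For (3) the two routes share the first step: $s=\sup\mathcal{F}$ inherits the common modulus of continuity and is therefore continuous. After that they diverge.
\begin{itemize}
\item The paper quotes the viscosity-theoretic fact (Crandall--Ishii--Lions, Lem.~4.2) that the upper semicontinuous envelope of a supremum of subsolutions is again a subsolution. Continuity is used only to ensure that this envelope is $s$ itself.
\item You pass to a countable subfamily that realizes $s$ on a dense set $D$. You then use that $\Phi^{k}$ is closed under finite maxima (Lemma~\ref{lem:max}), identify the increasing limit with $s$ by continuity, and finish with your part (2).
\end{itemize}
The paper's route is shorter and needs neither countability nor the lattice property. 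Yours uses only the two closure properties of $\Phi^{k}$ already quoted in the appendix. As a by-product it shows that $s$ is the locally uniform limit of an increasing sequence of finite maxima from $\mathcal{F}$. That is exactly the reduction the paper carries out by hand in its Perron argument for Theorem~\ref{thm:gedaechtniskirche}(\ref{enu:Zwischenwertsatz}).

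Two cosmetic remarks. If $\partial\Omega=\emptyset$, your $d$ is infinite and $K'$ is not compact, so take $d:=\min\{1,\dist(K,\partial\Omega)/2\}$. Also, the restriction $|x-y|<d/2$ and the constant $(d/2)^{-\alpha}$ are unnecessary, since every $x\in K$ is at distance at least $d$ from $\partial(\Int K')$.
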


\begin{proof}
(1) is already proved. (2) follows from (1) and Lemma \ref{lem:Grenzwert=000020k-convex}.
To prove (3), note that the supremum of a locally equicontinuous set
of functions satisfies the same equicontinuity. Therefore, it is continuous
and its $k$-convexity is the well-known property that the upper semi-continuous
envelope of the supremum of a set of viscosity subsolutions, if finite,
is a viscosity subsolution as well \cite[Lem.~4.2, p.~23]{CrandallIshiiHitoshiUsersGuide}.
\end{proof}

\section*{Acknowledgements}

The author wishes to thank his professor Dietmar Gallistl for a profound
restructuring of the article, besides many corrections and his cordial
encouragement.

\section*{Disclosure of interest}

The author reports there are no competing interests to declare.

\bibliographystyle{alpha}
\bibliography{literatur}

\end{document}